\newcommand{\nc}{\newcommand}
\nc{\fg}{\mathfrak{f} } \nc{\vg}{\mathfrak{v} } \nc{\wg}{\mathfrak{w} }
\nc{\zg}{\mathfrak{z} } \nc{\ngo}{\mathfrak{n} } \nc{\kg}{\mathfrak{k} }
\nc{\mg}{\mathfrak{m} } \nc{\bg}{\mathfrak{b} } \nc{\ggo}{\mathfrak{g} }
\nc{\ggob}{\overline{\mathfrak{g}} } \nc{\sog}{\mathfrak{so} }
\nc{\sug}{\mathfrak{su} } \nc{\spg}{\mathfrak{sp} } \nc{\slg}{\mathfrak{sl} }
\nc{\glg}{\mathfrak{gl} } \nc{\cg}{\mathfrak{c} } \nc{\rg}{\mathfrak{r} }
\nc{\hg}{\mathfrak{h} } \nc{\tg}{\mathfrak{t} } \nc{\ug}{\mathfrak{u} }
\nc{\dg}{\mathfrak{d} } \nc{\ag}{\mathfrak{a} } \nc{\pg}{\mathfrak{p} }
\nc{\sg}{\mathfrak{s} } \nc{\affg}{\mathfrak{aff} } \nc{\qg}{\mathfrak{q} }
\nc{\pca}{\mathcal{P}} \nc{\nca}{\mathcal{N}} \nc{\lca}{\mathcal{L}}
\nc{\oca}{\mathcal{O}} \nc{\mca}{\mathcal{M}} \nc{\tca}{\mathcal{T}}
\nc{\aca}{\mathcal{A}} \nc{\cca}{\mathcal{C}} \nc{\gca}{\mathcal{G}}
\nc{\sca}{\mathcal{S}} \nc{\hca}{\mathcal{H}} \nc{\bca}{\mathcal{B}}
\nc{\dca}{\mathcal{D}} \nc{\val}{\operatorname{val}}
\nc{\vp}{\varphi} \nc{\ddt}{\frac{d}{dt}} \nc{\dds}{\frac{d}{ds}}
\nc{\dpar}{\frac{\partial}{\partial t}} \nc{\im}{\mathtt{i}}
\nc{\SO}{\mathrm{SO}} \nc{\Spe}{\mathrm{Sp}} \nc{\Sl}{\mathrm{SL}}
\nc{\SU}{\mathrm{SU}} \nc{\Or}{\mathrm{O}} \nc{\U}{\mathrm{U}} \nc{\Gl}{\mathrm{GL}}
\nc{\Se}{\mathrm{S}} \nc{\Cl}{\mathrm{Cl}} \nc{\Spein}{\mathrm{Spin}}
\nc{\Pin}{\mathrm{Pin}} \nc{\G}{\mathrm{GL}_n(\RR)} \nc{\g}{\mathfrak{gl}_n(\RR)}
\nc{\RR}{{\Bbb R}} \nc{\HH}{{\Bbb H}} \nc{\CC}{{\Bbb C}} \nc{\ZZ}{{\Bbb Z}}
\nc{\FF}{{\Bbb F}} \nc{\NN}{{\Bbb N}} \nc{\QQ}{{\Bbb Q}} \nc{\PP}{{\Bbb P}} \nc{\OO}{{\Bbb O}}
\nc{\vs}{\vspace{.2cm}} \nc{\vsp}{\vspace{1cm}} \nc{\ip}{\langle\cdot,\cdot\rangle}
\nc{\ipp}{(\cdot,\cdot)} \nc{\la}{\langle} \nc{\ra}{\rangle} \nc{\unm}{\frac{1}{2}}
\nc{\unc}{\frac{1}{4}} \nc{\und}{\frac{1}{16}} \nc{\no}{\vs\noindent}
\nc{\lam}{\Lambda^2(\RR^n)^*\otimes\RR^n} \nc{\tangz}{{\rm T}^{\rm Zar}}
\nc{\nor}{{\sf n}}  \nc{\mum}{/\!\!/} \nc{\kir}{/\!\!/\!\!/}
\nc{\Ri}{\tfrac{4\Ric_{\mu}}{||\mu||^2}} \nc{\ds}{\displaystyle}
\nc{\ben}{\begin{enumerate}} \nc{\een}{\end{enumerate}} \nc{\f}{\frac}
\nc{\lb}{[\cdot,\cdot]} \nc{\isn}{\tfrac{1}{||v||^2}}
\nc{\gkp}{(\ggo=\kg\oplus\pg,\ip)} \nc{\ukh}{(\ug=\kg\oplus\hg,\ip)}
\nc{\tgkp}{(\tilde{\ggo}=\kg\oplus\pg,\ip)}
\nc{\wt}{\widetilde} \nc{\mm}{M}
\nc{\iop}{\mathtt{i}} \nc{\jop}{\mathtt{j}}
\nc{\Hess}{\operatorname{Hess}} \nc{\ad}{\operatorname{ad}}
\nc{\Ad}{\operatorname{Ad}} \nc{\rank}{\operatorname{rank}}
\nc{\Irr}{\operatorname{Irr}} \nc{\End}{\operatorname{End}}
\nc{\Aut}{\operatorname{Aut}} \nc{\Inn}{\operatorname{Inn}}
\nc{\Der}{\operatorname{Der}} \nc{\Ker}{\operatorname{Ker}}
\nc{\Iso}{\operatorname{Iso}} \nc{\Diff}{\operatorname{Diff}}
\nc{\Lie}{\operatorname{L}} \nc{\tr}{\operatorname{tr}} \nc{\dif}{\operatorname{d}}
\nc{\sen}{\operatorname{sen}} \nc{\modu}{\operatorname{mod}}
\nc{\CRic}{\operatorname{PP}} \nc{\Cric}{\operatorname{P}} \nc{\Ricci}{\operatorname{Ric}}
\nc{\sym}{\operatorname{sym}} \nc{\herm}{\operatorname{herm}} \nc{\symac}{\operatorname{sym^{ac}}}
\nc{\symc}{\operatorname{sym^{c}}} \nc{\scalar}{\operatorname{sc}}
\nc{\grad}{\operatorname{grad}} \nc{\ricci}{\operatorname{Rc}}
\nc{\Nor}{\operatorname{Norm}}  \nc{\ricc}{\operatorname{Rc^{c}}}
\nc{\Ricc}{\operatorname{Ric^{c}}} \nc{\ricac}{\operatorname{Rc^{ac}}}
\nc{\Ricac}{\operatorname{Ric^{ac}}} \nc{\Riem}{\operatorname{Rm}}
\nc{\riccig}{\operatorname{ric^{\gamma}}} \nc{\Rin}{\operatorname{M}}
\nc{\Le}{\operatorname{L}} \nc{\tang}{\operatorname{T}}
\nc{\level}{\operatorname{level}} \nc{\rad}{\operatorname{r}}
\nc{\abel}{\operatorname{ab}} \nc{\CH}{\operatorname{CH}}
\nc{\mcc}{\operatorname{mcc}} \nc{\Adj}{\operatorname{Adj}}
\nc{\Order}{\operatorname{O}}  \nc{\inj}{\operatorname{inj}} \nc{\proy}{\operatorname{pr}}
\nc{\vol}{\operatorname{vol}} \nc{\Diag}{\operatorname{Diag}}
\nc{\Spec}{\operatorname{Spec}} \nc{\Ima}{\operatorname{Im}} \nc{\Rea}{\operatorname{Re}}
\nc{\spann}{\operatorname{span}}
\theoremstyle{plain}
\newtheorem{theorem}{Theorem}[section]
\newtheorem{proposition}[theorem]{Proposition}
\newtheorem{corollary}[theorem]{Corollary}
\newtheorem{lemma}[theorem]{Lemma}
\theoremstyle{definition}
\newtheorem{definition}[theorem]{Definition}
\newtheorem{note}[theorem]{Note}
\theoremstyle{remark}
\newtheorem{remark}[theorem]{Remark}
\newtheorem{example}[theorem]{Example}
\title{Laplacian flow of homogeneous $G_2$-structures and its solitons}
\author{Jorge Lauret}
\address{Universidad Nacional de C\'ordoba, FaMAF and CIEM, 5000 C\'ordoba, Argentina}
\email{lauret@famaf.unc.edu.ar}
\thanks{This research was partially supported by grants from CONICET, FONCYT and SeCyT (Universidad Nacional de C\'ordoba)}
\begin{document}

\maketitle

\begin{abstract}
We use the bracket flow/algebraic soliton approach to study the Laplacian flow of $G_2$-structures and its solitons in the homogeneous case.  We prove that any homogeneous Laplacian soliton is equivalent to a semi-algebraic soliton (i.e.\ a $G$-invariant $G_2$-structure on a homogeneous space $G/K$ that flows by pull-back of automorphisms of $G$ up to scaling).  Algebraic solitons are geometrically characterized among Laplacian solitons as those with a `diagonal' evolution.  Unlike the Ricci flow case, where any homogeneous Ricci soliton is isometric to an algebraic soliton, we have found, as an application of the above characterization, an example of a left-invariant closed semi-algebraic soliton on a nilpotent Lie group which is not equivalent to any algebraic soliton.  The (normalized) bracket flow evolution of such a soliton is periodic.  In the context of solvable Lie groups with a codimension-one abelian normal subgroup, we obtain long time existence for any closed Laplacian flow solution; furthermore, the norm of the torsion is strictly decreasing and converges to zero.  We also classify algebraic solitons in this class and exhibit several explicit examples of closed expanding Laplacian solitons.
\end{abstract}

\tableofcontents

\section{Introduction}\label{intro}

The {\it Laplacian flow} for a family $\vp(t)$ of $G_2$-structures on a fixed $7$-dimensional differentiable manifold $M$ is the evolution equation
$$
\dpar\vp(t) = \Delta_{\vp(t)}\vp(t),
$$
where $\Delta_{\vp(t)}$ is the Hodge Laplacian operator on $3$-forms determined by the Riemannian metric $g_{\vp(t)}$ and orientation defined by each $\vp(t)$.  It was introduced back in 1992 by Bryant (see \cite{Bry}) as a tool to try to deform a closed $G_2$-structure to a torsion-free one.  It is well known that torsion-free (or parallel) $G_2$-structures produce Ricci flat Riemannian metrics with holonomy contained in $G_2$.  Foundational results for this flow in the case when $M$ is compact and $\vp$ closed have recently been developed by Lotay-Wei in \cite{Lty} (see also \cite{BryXu,Krg,Grg,Lty2,Lty3}).  The long time behavior of the flow, including long time existence and convergence to a torsion-free $G_2$-structure, is the main natural problem.  In this respect, self-similar solutions play a crucial role in the study of the singularities of the flow.

It follows from the invariance by diffeomorphisms of the flow that a solution $\vp(t)$ starting at a $G_2$-structure $\vp$ will be {\it self-similar}, in the sense that $\vp(t)=c(t)f(t)^*\vp$, for some $c(t)\in\RR^*$ and $f(t)\in\Diff(M)$, if and only if
$$
\Delta_\vp\vp=c\vp+\lca_{X}\vp, \qquad \mbox{for some}\quad c\in\RR, \quad X\in\mathfrak{X}(M)\; \mbox{(complete)}.
$$
In that case, $c(t)=\left(\frac{2}{3}ct+1\right)^{3/2}$ (see \cite[Section 4.4]{BF}), and in analogy to the terminology used in Ricci flow theory, $\vp$ is called a {\it Laplacian soliton} and one says it is {\it expanding}, {\it steady} or {\it shrinking}, if $c>0$, $c=0$ or $c<0$, respectively.  In the compact case,  it was proved in \cite{Lin} that there are no shrinking Laplacian solitons and that the only steady ones are the torsion-free $G_2$-structures (i.e. $\Delta_\vp\vp=0$).  There are some examples of compact and noncompact expanding Laplacian solitons in the {\it coclosed} case (i.e.\ $d\ast\vp=0$) which are all {\it eigenforms}: $\Delta_\vp\vp=c\vp$ for some $c\in\RR$ (see \cite{WssWtt2}, \cite{KrgMcKTsu}) and examples of solitons for the modified Laplacian coflow in \cite{Grg2}.  However, the only compact and closed Laplacian solitons which are eigenforms are the torsion-free $G_2$-structures (see \cite{Lty}).  A closed expanding Laplacian soliton which is not an eigenform was found on a nilpotent Lie group in \cite{BF} (see also \cite{Ncl}).  The existence of compact and closed expanding Laplacian solitons is an open problem, as it is so the existence of noncompact and closed shrinking (or non torsion-free steady) Laplacian solitons.

In this paper, we study the Laplacian flow and its solitons in the homogeneous case.  Our work was motivated by the article \cite{FrnFinMnr} by Fern\'andez-Fino-Manero on nilpotent Lie groups.  They study the existence of left-invariant closed $G_2$-structures yielding a Ricci soliton metric, as well as the Laplacian flow evolution of such structures.

A $7$-manifold endowed with a $G_2$-structure $(M,\vp)$ is said to be {\it homogeneous} if the Lie group of all its symmetries or automorphisms,
$$
\Aut(M,\vp):=\{ f\in\Diff(M):f^*\vp=\vp\}\subset\Iso(M,g_\vp),
$$
acts transitively on $M$.  Each Lie subgroup $G\subset\Aut(M,\vp)$ which is transitive on $M$ gives rise to a presentation of $M$ as a {\it homogeneous space} $G/K$, where $K$ is the isotropy subgroup of $G$ at some point $o\in M$, and $\vp$ becomes a $G$-invariant $G_2$-structure on the homogeneous space $M=G/K$.  In the presence of a {\it reductive} decomposition $\ggo=\kg\oplus\pg$ (i.e.\ $\Ad(K)\pg\subset\pg$) for the homogeneous space $G/K$, every $G$-invariant $G_2$-structure on $G/K$ is determined by a positive $3$-form $\vp$ on $\pg\equiv T_oG/K$ which is $\Ad(K)$-invariant.  By requiring $G$-invariance, the Laplacian flow on $G/K$ becomes equivalent to an ODE on the vector space $(\Lambda^3\pg^*)^K$ and thus short-time existence (forward and backward) and uniqueness (among $G$-invariant ones) of solutions are guaranteed.  We note that if $(M,\vp)$ is homogeneous, then all these $G$-invariant Laplacian flow solutions $\vp(t)$ on $M$ starting at $\vp$ for different transitive groups $G$ must coincide; indeed, such groups are all contained in the full automorphism group $\Aut(M,\vp)$.  Since at the moment the uniqueness of Laplacian flow solutions has only been established in the compact case, we do not know a priori if there are homogeneous solutions other than $\vp(t)$ starting at a noncompact homogeneous $(M,\vp)$.  Such possibility is however considered to be highly unlikely since it is reasonable to expect existence and uniqueness within the class of $G_2$-structures having complete and with bounded curvature associated metrics, as it holds in the Ricci flow case (see \cite{Shi,ChnZhu}).

Given a homogeneous space endowed with a $G$-invariant $G_2$-structure $(G/K,\vp)$, the viewpoint developed in \cite{BF} proposes to evolve the homogeneous space rather than the $3$-form $\vp$.  More precisely, each $(G/K,\vp(t))$, where $\vp(t)$ is the $G$-invariant Laplacian flow solution starting at $\vp$, is replaced by an equivariantly equivalent homogeneous space endowed with an invariant $G_2$-structure
$$
\left(G_{\mu(t)}/K_{\mu(t)},\vp\right),
$$
where $\mu(t)\in\Lambda^2\ggo^*\otimes\ggo$ is the solution to a certain ODE for Lie brackets starting at the Lie bracket $\lb$ of $\ggo$, called the {\it bracket flow}.  Here for each Lie bracket $\mu$ on $\ggo$, $G_\mu$ denotes the simply connected Lie group with Lie algebra $(\ggo,\mu)$ and $K_\mu$ the connected Lie subgroup of $G_\mu$ with Lie algebra $(\kg,\mu|_{\kg\times\kg})$.  The bracket flow has been a useful tool in the study of the Ricci flow and some other curvature flows in the homogeneous case (see e.g.\ \cite[Sections 5.3, 5.5]{BF} for an account of applications by different authors).  The approach strongly uses the following objects determined by the fixed positive $3$-form $\vp$: the $G_2$-invariant decompositions
$$
\glg(\pg)=\ggo_2\oplus\qg, \qquad \qg:=\qg_7\oplus\sym(\pg), \qquad \sog(\pg)=\ggo_2\oplus\qg_7,
$$
and the unique operator $Q_\vp\in\qg$ such that $\theta(Q_\vp)\vp=\Delta_\vp\vp$, where $\theta:\glg(\pg)\longrightarrow\End(\Lambda^3\pg^*)$ is the usual representation (see Remark \ref{Lty-rem} for the relationship between $Q_\vp$ and the operators $\iop_\vp$ and $\jop_\vp$ defined in \cite{Bry}).  It is proved in \cite[Sec.\ 2.2]{Lty} that $Q_\vp\in\sym(\pg)$ if $\vp$ is closed.  The Laplacian flow solution $\vp(t)$ and the bracket flow solution $\mu(t)$ have the same maximal interval of time existence, say $(T_-,T_+)$ with $T_-<0<T_+$.

After some preliminaries on $G_2$ geometry in Section \ref{prelim}, we adapt in Section \ref{LFhom-sec} the machinery developed in \cite{BF} to the Laplacian flow case; the results so obtained include (see Sections \ref{BF-sec} and \ref{sol-sec} for more precise statements):

\begin{itemize}
\item[(i)] The norm $|\Delta_{\vp(t)}\vp(t)|_{\vp(t)}$ of the velocity of the flow must blow up at any finite-time singularity
(compare with \cite[Theorem 1.6]{Lty}).

\item[(ii)] If $\mu(t)$ converge to a Lie bracket $\lambda$, as $t\to T_\pm$, and there is a positive lower bound for the (Lie) injectivity radii of the $G$-invariant metrics $g_{\vp(t)}$ on $G/K$, then $(G_\lambda/K_\lambda,\vp)$ is a Laplacian soliton (possibly non-homeomorphic to $G/K$) and $\left(G/K,\vp(t)\right)$ converge in the pointed (or Cheeger-Gromov) sense to $(G_\lambda/K_\lambda,\vp)$, as $t\to T_\pm$.

\item[(iii)] The following conditions on a simply connected $(G/K,\vp)$ are equivalent:
\begin{itemize}
\item[(a)] The operator $Q_{\vp}$ such that $\theta(Q_{\vp})\vp=\Delta_\vp\vp$ satisfies
$$
Q_{\vp}=cI+D_\pg, \quad\mbox{for some} \; c\in\RR, \; D=\left[\begin{smallmatrix} 0&0\\ 0&D_\pg \end{smallmatrix}\right]\in\Der(\ggo),
$$
i.e.\ $(G/K,\vp)$ is an {\it algebraic soliton}.

\item[(b)] $\Delta_\vp\vp=-3c\vp-\lca_{X_D}\vp$, where $X_D$ denotes the vector field on $G/K$ defined by the one-parameter subgroup of automorphisms of $G$ attached to the derivation $D$ (in particular, $(G/K,\vp)$ is a Laplacian soliton).
\end{itemize}
\end{itemize}
The concept of algebraic soliton has a long and fruitful history in the Ricci flow case, due perhaps to its neat definition as a combination of geometric and algebraic aspects of $(G/K,\vp)$.  It has also been a useful tool to address the existence problem of soliton structures for general curvature flows in almost-hermitian geometry (see \cite{SCF}), the symplectic curvature flow (see \cite{Frn2,SCFmuA}) and the Chern-Ricci flow (see \cite{CRF}).  As in any of these cases, a natural question is how special are algebraic solitons among homogeneous Laplacian solitons.  Unlike the Ricci flow case, where any homogeneous Ricci soliton is isometric to an algebraic soliton (see \cite{Jbl2,ArrLfn2}), we have found a left-invariant closed Laplacian soliton on a nilpotent Lie group which can not be equivalent to any algebraic soliton (see Example \ref{semi-ex}).

General properties of homogeneous Laplacian solitons are studied in Section \ref{homLS}.  Let $G/K$ be a homogeneous space with $G$ simply connected, $K$ connected and compact, and consider the reductive decomposition $\ggo=\kg\oplus\pg$ such that $B(\kg,\pg)=0$, where $B$ is the Killing form of $\ggo$.   We prove that if $\vp$ is a closed $G$-invariant $G_2$-structure, then the following conditions on $(G/K,\vp)$ are equivalent:
\begin{itemize}
\item[(i)] The $G$-invariant Laplacian flow solution starting at $\vp$ is given by
$$
\vp(t)=c(t)f(t)^*\vp,
$$
for some family $f(t)$ of equivariant diffeomorphisms of $G/K$ (i.e. automorphisms of $G$ taking $K$ onto $K$) and $c(t)\in\RR^*$, in which case $(G/K,\vp)$ is called a {\it semi-algebraic soliton}.

\item[(ii)] The operator $Q_{\vp}$ such that $\theta(Q_{\vp})\vp=\Delta_\vp\vp$ satisfies
$$
Q_{\vp}=cI+\tfrac{1}{2}(D_\pg+D_\pg^t), \quad\mbox{for some} \; c\in\RR, \; D=\left[\begin{smallmatrix} 0&0\\ 0&D_\pg \end{smallmatrix}\right]\in\Der(\ggo).
$$

\item[(iii)] The bracket flow solution $\mu(t)$ starting at $\lb$ is given by
$$
\tfrac{\mu(t)}{|\mu(t)|} = \left[\begin{smallmatrix} I&0\\ 0& e^{s(t)A} \end{smallmatrix}\right] \cdot \lb,\qquad  A := \tfrac{1}{2}(D_\pg - D_\pg^t),  \qquad s(t):=-\frac{1}{2c}\log(-2ct+1).
$$
\end{itemize}
In that case, $(G/K,\vp)$ is indeed a Laplacian soliton with $\Delta_\vp\vp=-3c\vp-\lca_{X_D}\vp$.  Note that if in addition in part (ii) one has that $D^t\in\Der(\ggo)$, then $(G/K,\vp)$ is an algebraic soliton and $\tfrac{\mu(t)}{|\mu(t)|} \equiv\lb$.  The following results were also obtained in Section \ref{homLS}:
\begin{itemize}
\item[(a)] Let $(M,\vp)$ be a homogeneous Laplacian soliton and consider $G=\Aut(M,\vp)$.  If the $G$-invariant Laplacian flow solution $\vp(t)$ on $M=G/K$ starting at $\vp$ is self-similar, then $(G/K,\vp)$ is a semi-algebraic soliton.

\item[(b)] A closed semi-algebraic soliton $(G/K,\vp)$ with $K$ compact is {\it Laplacian flow diagonal} (i.e.\ the family of operators $\{ Q_{\vp(t)}:t\in(T_-,T_+)\}$ simultaneously diagonalizes) if and only if it is an algebraic soliton.  Since the condition is an equivalence invariant, this geometrically characterizes algebraic solitons among homogeneous Laplacian solitons.
\end{itemize}

It is worth pointing out that the concepts of semi-algebraic and algebraic solitons are applied to homogeneous spaces, not to homogeneous manifolds.  In this regard, we give an example of a solvable Lie group endowed with a left-invariant closed $G_2$-structure $(G,\vp)$ which is not a semi-algebraic soliton but is  nevertheless equivalent to an algebraic soliton on a different Lie group (see Example \ref{no-sas}).  As an application of the result in part (b) above, we found a left-invariant closed semi-algebraic soliton on a nilpotent Lie group which is not Laplacian flow diagonal and therefore it is not equivalent to any algebraic soliton (see Example \ref{semi-ex}).

It follows from part (iii) above that if the set of nonzero eigenvalues of $A$ is linearly dependent over $\QQ$, then $\tfrac{\mu(t)}{|\mu(t)|}$ is periodic and thus for any expanding (shrinking) semi-algebraic soliton, $\mu(t)$ converges to zero (to infinity) by rounding in a cone as $t\to\infty$ ($t\to\tfrac{1}{2c}$).  If on the contrary, such set is linearly independent over $\QQ$, then $\frac{\mu(t)}{|\mu(t)|}$ is not periodic and develops the following chaotic behavior: each point of the solution is contained in the $\omega$-limit.

In Section \ref{muA-sec}, we work in a more explicit way on the class of {\it almost-abelian} (i.e.\ with a codimension-one abelian normal subgroup) solvable Lie groups.  Left-invariant coclosed and closed $G_2$-structures on these Lie groups have been studied by Freibert in \cite{Frb1,Frb2}.  One attaches to each matrix $A\in\slg(3,\CC)\subset\glg_6(\RR)$ a left-invariant closed $G_2$-structure on a simply connected solvable Lie group denoted by $G_A$.  The Lie algebra of $G_A$ has an orthonormal basis $\{ e_1,\dots,e_{7}\}$ such that $\hg:=\spann\{ e_1,\dots,e_{6}\}$ is an abelian ideal, $\ad{e_{7}}|_{\hg}=A$, and the fixed positive $3$-form is given by
$$
\vp:=e^{127}+e^{347}+e^{567}+e^{135}-e^{146}-e^{236}-e^{245}.
$$
The construction covers, up to equivalence, all left-invariant closed $G_2$-structures on almost abelian Lie groups.  The $G_2$-structure $(G_A,\vp)$ is torsion-free if and only if $A\in\sug(3)$.  After giving some criteria for the equivalence between these structures, we compute their torsion, Ricci curvature and the symmetric operator $Q_A\in\sym(7)$ satisfying $\theta(Q_A)\vp=\Delta_A\vp$ all in terms of $A$, which is actually the only datum that is varying here.  We summarize the main results obtained on this class as follows, after noting that the nonabelian Lie groups of the form $G_A$ which are nilpotent are exactly two and their Lie algebras have been denoted by $\ngo_2$ ($A^2=0$) and $\ngo_6$ ($A^3=0$ and $A^2\ne 0$) in \cite{FrnFinMnr,Ncl}:

\begin{itemize}
\item The Laplacian flow is equivalent to the ODE for $A=A(t)\in\slg(3,\CC)$ given by
$$
\ddt A = -\frac{1}{3}\tr{(A+A^*)^2}A + \unm[A,[A,A^*]] - \unm[A,(A+A^*)^2].
$$
\item Any left-invariant closed Laplacian flow solution $\vp(t)$ on a Lie group $G_A$ is immortal (i.e.\ $T_+=\infty$).  Moreover, the scalar curvature of $g_{\vp(t)}$ is strictly increasing and converges to zero, as $t\to\infty$.

\item The Lie group $G_A$ admits a semi-algebraic soliton if and only if $A$ is either semisimple or nilpotent.  They are all expanding.

\item For $A$ semisimple, such a soliton is algebraic and it is the unique semi-algebraic soliton up to equivalence and scaling among all left-invariant closed $G_2$-structures on $G_A$.

\item  $\ngo_2$ admits only one closed $G_2$-structure up to equivalence and scaling, which is an algebraic soliton.

\item $\ngo_6$ does not have any closed algebraic soliton.  The matrices
$$
A_t:=\left[\begin{smallmatrix}
0&t&0\\
&0&1\\
&&0
\end{smallmatrix}\right]\in\slg(3,\CC), \qquad t>0,
$$
provide a continuous family of $G_2$-structures $(G_{A_t},\vp)$ on $\ngo_6$, and $(G_{A_t},\vp)$ is a semi-algebraic soliton if and only if $t=\sqrt{2}$.
\end{itemize}

\vs \noindent {\it Acknowledgements.} The author is grateful to the referee for very helpful comments.

\section{On $G_2$ geometry}\label{prelim}

Roughly speaking, a $G_2$-structure on a $7$-dimensional differentiable manifold is a smooth identification of each of the tangent spaces with $\Ima\OO$, the imaginary part of the octonions, just as an almost-hermitian structure identifies with $\CC^n$ (endowed with its canonical hermitian inner product) each of the tangent spaces.  In this section, we give a quick overview on $G_2$-structures and refer the reader to \cite{Bry,Krg,Lty} for more detailed treatments.

\subsection{The octonions}\label{oct}
Recall that the octonion algebra $\OO$ is an $8$-dimensional real division algebra which is normed, i.e.\ it admits an inner product $\ip$ such that $|uv|=|u||v|$ for all $u,v\in\OO$.  Analogously to the quaternion numbers $\HH$, the octonion product defines a skew-symmetric bilinear map $\times:\Ima\OO\times\Ima\OO\longrightarrow\Ima\OO$ by $u\times v:=\Ima uv$, which turns to be a {\it cross product}, in the sense that
$$
u\times v\perp u,v, \qquad \mbox{and} \qquad |u\times v|^2=|u|^2|v|^2-\la u,v\ra^2, \qquad\forall u,v\in\Ima\OO.
$$
Curiously enough, all this information can be captured in the $3$-form $\phi$ defined by
$$
\phi(u,v,w):=\la u\times v,w\ra, \qquad\forall u,v,w\in\Ima\OO.
$$
Indeed, the inner product can be recovered from $\phi$ in the following highly non-linear way:
\begin{equation}\label{ip-phi}
\la u,v\ra\vol = \frac{1}{6} i_u(\phi)\wedge i_v(\phi)\wedge\phi,
\end{equation}
where $i_u(\phi)$ is the $2$-form given by $i_u(\phi)(v,w):=\phi(u,v,w)$ and $\vol$ is a nonzero $7$-form, so an orientation is also determined by $\phi$.  The cross product is therefore determined by $\phi$ and the product on $\OO$ is given by $uv=-\la u,v\ra 1+u\times v$.

With respect to a suitable oriented and orthonormal basis $\{ e_1,\dots, e_7\}$ of $\Ima\OO$, the $3$-form $\phi$ is written as
\begin{equation}\label{phi-can}
\phi=e^{123}+e^{145}+e^{167}+e^{246}-e^{257}-e^{347}-e^{356},
\end{equation}
where $e^{ijk}:=e^i\wedge e^j\wedge e^k$ and $\{ e^i\}$ is the dual basis of $\{ e_i\}$.  We identify $\RR^7\equiv\Ima\OO$ from now on by using the basis $\{ e_i\}$.  Another remarkable feature is that the whole process still works under small perturbations of $\phi$: the orbit $\Gl_7(\RR)\cdot\phi$ is open in $\Lambda^3(\RR^7)^*$ and so any $3$-form sufficiently close to $\phi$ is of the form $h\cdot\phi$ for some $h\in\Gl_7(\RR)$, from which one constructs an algebra isomorphic to $\OO$ via $h$.

The automorphism group of $\OO$ is isomorphic to the automorphism group of the cross product $\times$.  This, in turn, coincides with the subgroup of $\Gl_7(\RR)$ stabilizing the $3$-form $\phi$, which is actually contained in $\SO(7)$.  They are all isomorphic to the simply connected and compact $14$-dimensional exceptional simple Lie group $G_2$.  In particular, the openness of the orbit $\Gl_7(\RR)\cdot\phi$ in $\Lambda^3(\RR^7)^*$ follows from dimension count: $49-14=35$.

Using the $\epsilon$-notation introduced in \cite[Section 2.4]{Bry} we rewrite formula \eqref{phi-can} as
$$
\phi=\frac{1}{6}\sum_{i,j,k} \epsilon_{ijk}e^{ijk}, \quad\mbox{or equivalently}, \quad e_i\times e_j=\sum_k\epsilon_{ijk}e_k.
$$
The symbol $\epsilon_{ijk}$ is skew-symmetric in the three indices and satisfies many useful identities.  The Lie algebra $\ggo_2$ of $G_2$ can be described as a subalgebra of $\sog(7)$ as follows:
$$
\ggo_2=\left\{ A=[a_{ij}]\in\sog(7): \sum_{j,k} a_{ij}\epsilon_{ijk}=0, \quad\forall i\right\}.
$$
On the other hand, cross product left-multiplication defines a $7$-dimensional subspace of $\sog(7)$,
\begin{equation}\label{q7}
\qg_7=\left\{ [v_{ij}]:v\in\RR^7,\; v_{ij}:=\sum_k\epsilon_{ijk}\la v,e_k\ra\right\},
\end{equation}
such that $\sog(7)=\ggo_2\oplus\qg_7$.  This is a reductive decomposition for the (symmetric) homogeneous space $\RR P^7=\SO(7)/G_2$.

\subsection{Positive $3$-forms}
Let $\pg$ be a real vector space of dimension $7$.  A $3$-form $\vp\in\Lambda^3\pg^*$ is called {\it positive} if it can be written as in \eqref{phi-can} in terms of some basis, or equivalently, if it belongs to the open orbit $\Gl(\pg)\cdot\phi\subset\Lambda^3\pg^*$.  It follows that the set of all positive $3$-forms is parameterized by the $35$-dimensional homogeneous space $\Gl_7(\RR)/G_2$.  Each positive $3$-form $\vp$ defines a unique inner product $\ip_\vp$ and an orientation via
$$
\la u,v\ra_\vp\vol_\vp = \frac{1}{6} i_u(\vp)\wedge i_v(\vp)\wedge\vp,
$$
where $\vol_\vp:=(\det{h})^{-1}\vol$ if $\vp=h\cdot\phi$ for $h\in\Gl(\pg)$ (see \eqref{ip-phi} and \eqref{phi-can}).  Note that this gives an alternative definition of positivity since the assignment is equivariant in the sense that $\ip_{h\cdot\vp}=h\cdot\ip_\vp$ for any $h\in\Gl(\pg)$.  Thus a Hodge star operator $\ast_\vp:\Lambda^k\pg^*\longrightarrow\Lambda^{7-k}\pg^*$ is also determined by $\vp$ as usual:
$$
\alpha\wedge\ast_\vp\beta = \la\alpha,\beta\ra_\vp\vol_\vp, \qquad\forall\alpha,\beta\in\Lambda^k\pg^*,
$$
where $\ip_\vp$ also denotes the natural inner product defined on $\Lambda\pg^*$ by $\ip_\vp$.  Alternatively,
\begin{equation}\label{Hstar}
\ast_\vp e^{i_1}\wedge\dots\wedge e^{i_k} := \pm e^{j_1}\wedge\dots\wedge e^{j_{7-k}},
\end{equation}
where $\{ e_1,\dots,e_7\}$ is an oriented orthonormal basis of $(\pg^*,\ip_\vp)$ with dual basis $\{ e^i\}$, $\{ i_1,\dots,i_k,j_1,\dots,j_{7-k}\}=\{ 1,\dots, 7\}$ and $e^{i_1}\wedge\dots\wedge e^{i_k}\wedge e^{j_1}\wedge\dots\wedge e^{j_{7-k}}=\pm e^1\wedge\dots\wedge e^7$.  In particular, $\ast_\vp^2=id$.

The $\Gl(\pg)$-orbit of the $4$-form $\ast_\vp\vp$ is also open in $\Lambda^4\pg^*$ and its stabilizer subgroup is isomorphic to $\pm G_2$.  Note that
$$
\ast_\phi\phi = e^{4567}+e^{2367}+e^{2345}+e^{1357}-e^{1346}-e^{1256}-e^{1247}.
$$

Let us fix a positive $3$-form $\vp$ on $\pg$.  Since the orbit $\Gl(\pg)\cdot\vp$ is open in $\Lambda^3\pg^*$, we have that its tangent space at $\vp$ satisfies
\begin{equation}\label{nondeg}
\theta(\glg(\pg))\vp = \Lambda^3\pg^*,
\end{equation}
where $\theta:\glg(\pg)\longrightarrow \End(\Lambda^3\pg^*)$ is the representation obtained as the derivative of the natural left $\Gl(\pg)$-action on $3$-forms $h\cdot\psi=\psi(h^{-1}\cdot,h^{-1}\cdot,h^{-1}\cdot)$, i.e.\
$$
\theta(A)\psi=-\psi(A\cdot,\cdot,\cdot)-\psi(\cdot,A\cdot,\cdot)-\psi(\cdot,\cdot,A\cdot), \qquad\forall A\in\glg(\pg),\quad\psi\in\Lambda^3\pg^*.
$$
The Lie algebra of the stabilizer subgroup $G_2(\vp):=\Gl(\pg)_\vp\simeq G_2$ is given by
$$
\ggo_2(\vp):=\{ A\in\glg(\pg):\theta(A)\vp=0\}\simeq\ggo_2.
$$
We consider the orthogonal complement subspace $\qg(\vp)\subset\glg(\pg)$ of $\ggo_2(\vp)$ relative to the inner product on $\glg(\pg)$ determined by $\ip_\vp$ (i.e.\ $\tr{AB^t}$).  The irreducible $G_2(\vp)$-components of $\qg(\vp)$ are $\qg_1(\vp)=\RR I$, the one-dimensional trivial representation, the ($7$-dimensional) standard representation $\qg_7(\vp)$ (see \eqref{q7}) and $\qg_{27}(\vp)$, the other fundamental representation, which has dimension $27$.  Summarizing, each positive $3$-form $\vp$ determines the following $G_2(\vp)$-invariant decompositions:
\begin{equation}\label{g2-dec}
\begin{array}{c}
\glg(\pg)=\ggo_2(\vp)\oplus\qg(\vp), \qquad \qg(\vp)=\qg_1(\vp)\oplus\qg_7(\vp)\oplus\qg_{27}(\vp), \\ \\
\sog(\pg)=\ggo_2(\vp)\oplus\qg_7(\vp), \qquad \sym(\pg)=\qg_1(\vp)\oplus\qg_{27}(\vp), \qquad \qg_{27}(\vp)=\sym_0(\pg),
\end{array}
\end{equation}
where $\sog(\pg)$ and $\sym(\pg)$ are the spaces of skew-symmetric and symmetric linear maps with respect to $\ip_\vp$, respectively, and $\sym_0(\pg):=\{ A\in\sym(\pg):\tr{A}=0\}$.

It follows from \eqref{nondeg} that $\theta(\qg(\vp))\vp=\Lambda^3\pg^*$; moreover, for every $3$-form $\psi\in\Lambda^3\pg^*$, there exists a unique operator $Q_\psi\in\qg(\vp)$ such that
\begin{equation}\label{nondeg2}
\psi=\theta(Q_\psi)\vp.
\end{equation}

\begin{remark}\label{Lty-rem}
If we identify $\sym(\pg)$ with the space $S^2\pg^*$ of symmetric bilinear forms by using $\ip$, then the linear isomorphism
$
\iop:S^2\pg^*\equiv\sym(\pg)\longrightarrow \Lambda^3_{1}\pg^*\oplus\Lambda^3_{27}\pg^*,
$
defined in \cite[(2.17)]{Bry} (and in \cite[(2.6)]{Lty} with a factor of $1/2$) is given by
\begin{equation}\label{iop}
\iop(A)=-2\theta(A)\vp; \qquad\mbox{in particular}, \quad \iop(Q_\psi)=-2\psi.
\end{equation}
On the other hand, the linear map $\jop:\Lambda^3\pg^*\longrightarrow\sym(\pg)$ defined in \cite[(2.18)]{Bry} (and in \cite{Lty}) satisfies that $\jop(\iop(h))=8h+4\tr(h)\ip$ for any $h\in S^2\pg^*$.  It follows from \eqref{iop} that, in terms of the $Q$-operators, $\jop$ is defined by
\begin{equation}\label{jop}
\jop(\psi)=-2\tr(Q_\psi)I-4Q_\psi, \qquad\forall\psi\in\Lambda^3_{1}\pg^*\oplus\Lambda^3_{27}\pg^*.
\end{equation}
Recall that $\jop$ vanishes on $\Lambda^3_{7}\pg^*$ and it is an isomorphism when restricted to $\Lambda^3_{1}\pg^*\oplus\Lambda^3_{27}\pg^*$.
\end{remark}

\subsection{$G_2$-structures}
A $G_2$-{\it structure} on a $7$-dimensional differentiable manifold $M$ is a differential $3$-form $\vp\in\Omega^3M$ such that $\vp_p$ is positive on $T_pM$ for any $p\in M$, or in other words, $\vp_p$ can be written as in \eqref{phi-can} with respect to some basis $\{ e_1,\dots,e_7\}$ of $T_pM$.  Recall from Section \ref{oct} that this suffices to define an octonion product on each vector space $\RR 1\oplus T_pM$ with $T_pM=\Ima\OO$.  We denote by $g_\vp$ the Riemannian metric on $M$, i.e. $g_\vp(p):=\ip_{\vp_p}$ for all $p\in M$, and by $\ast_\vp:\Omega M\longrightarrow\Omega M$ the Hodge star operator defined by $\vp$.

The presence of a $G_2$-structure on $M$ is equivalent to have a sub-bundle with structure group $G_2$ of the $\Gl_7(\RR)$-frame bundle over $M$, i.e.\ the existence of local frames of $TM$ such that all the transition functions are in $G_2$.  It is well known that a $7$-dimensional manifold $M$ admits a $G_2$-structure if and only if $M$ is orientable and spin.  Two manifolds endowed with $G_2$-structures $(M,\vp)$ and $(M',\vp')$ are called {\it equivalent} if $\vp'=f^*\vp$ for some diffeomorphism $f:M'\longrightarrow M$.

The {\it torsion forms} of a $G_2$-structure $\vp$ on $M$ are the components of the {\it intrinsic torsion} $\nabla_\vp\vp$, where $\nabla_\vp$ is the Levi-Civita connection of $g_\vp$.  They can be defined as the unique differential forms $\tau_i\in\Omega^iM$, $i=0,1,2,3$, such that
\begin{equation}\label{dphi}
d\vp=\tau_0\psi+3\tau_1\wedge\vp+\ast_\vp\tau_3, \qquad d\psi=4\tau_1\wedge\psi+\tau_2\wedge\vp,
\end{equation}
where we set from now on $\psi:=\ast_\vp\vp\in\Omega^4M$.  Let $\Omega^2M=\Omega^2_7M\oplus\Omega^2_{14}M$ and $\Omega^3M=\Omega^3_1M\oplus\Omega^3_7M\oplus\Omega^3_{27}M$ be the decompositions defined by the splitting of the bundles $\Lambda^kM$ into the irreducible $G_2(\vp)$-representations given in \eqref{g2-dec}, where we identify $T_pM\equiv\pg$ via an oriented orthonormal basis.  We have that $\tau_2\in\Omega^2_{14}M$, the space of smooth sections of the subbundle $\ggo_2(\vp)\subset\sog(TM)\equiv\Lambda^2M$, and $\tau_3\in\Omega^3_{27}M$, which corresponds to $\theta(\qg_{27}(\vp))\vp\subset\Lambda^3M$.

Some special classes of $G_2$-structures are defined as follows (see \cite{FrnGry}):
\begin{itemize}
  \item {\it closed} (or {\it calibrated}): $d\vp=0$;
  \item {\it coclosed} (or {\it cocalibrated}): $d\psi=0$;
    \item {\it harmonic}: $\Delta_\vp\vp=0$, where $\Delta_\vp=\ast_\vp d\ast_\vp d-d\ast_\vp d\ast_\vp$ is the Hodge Laplacian operator on $3$-forms;
  \item {\it torsion-free}: $\tau_i=0$, for all $i=0,1,2,3$ (or equivalently, {\it parallel}: $\nabla_\vp\vp=0$);
  \item {\it nearly parallel}: $d\vp=c\psi$ for some nonzero $c\in\RR$, or equivalently, $\tau_i=0$ for all $i=1,2,3$ and $d\tau_0=0$.  In particular, $d\psi=0$ and $\Delta_\vp\vp=c^2\vp$.
\end{itemize}

It was proved in \cite{FrnGry} that the following conditions on a $G_2$-structure $\vp$ are equivalent:
\begin{itemize}
  \item $\vp$ closed and coclosed.
  \item $\vp$ torsion-free.
\end{itemize}
In that case, the holonomy group of $(M,g_\vp)$ is contained in $G_2$, $g_\vp$ is Ricci flat and $(M,\vp)$ is called a $G_2$ {\it manifold}.  In the compact case, $\vp$ harmonic can be added in the list of equivalent conditions above.

\subsection{Laplacian flow}
Any oriented $7$-dimensional Riemannian manifold $(M,g)$ determines a Hodge star operator $\ast:\Omega^k M\longrightarrow\Omega^{7-k}M$ as in \eqref{Hstar}, and also the so called {\it Hodge Laplacian operator} given by
$$
\Delta:\Omega^kM\longrightarrow\Omega^kM, \qquad \Delta:=d^*d+dd^*,
$$
where $d^*:\Omega^{k+1}M\longrightarrow\Omega^kM$, $d^*=(-1)^{k+1}\ast d\ast$, is the adjoint of $d$ (see e.g.\ \cite[7.2]{Ptr}).  Given a $G_2$-structure $\vp$ on $M$, we denote by $\Delta_\vp$ the Hodge Laplacian operator determined by the Riemannian metric $g_\vp$ and orientation defined by $\vp$.  In particular, $\Delta_\vp:\Omega^3M\longrightarrow\Omega^3M$ is given by $\Delta_\vp=\ast_\vp d \ast_\vp d - d \ast_\vp d \ast_\vp$.


The following natural geometric flow for $G_2$-structures was introduced by R. Bryant in \cite{Bry}, and is called the {\it Laplacian flow}:
\begin{equation}\label{LF}
\dpar\vp(t) = \Delta_{\vp(t)}\vp(t),
\end{equation}
where $\vp(t)$ is a one-parameter family of $G_2$-structures on a given $7$-dimensional differentiable manifold $M$.  We refer the reader to the recent article by Lotay and Wei \cite{Lty} and the references therein for further information on this flow.

Let $(T_-,T_+)$ denote the maximal interval of time existence for a Laplacian flow solution $\vp(t)$.  We aim to understand the behavior of $(M,\vp(t))$, as $t$ is approaching a singularity $T_\pm$, in the same spirit as in \cite[Section 3]{Ltt}, where the long-time behavior of homogeneous type-III Ricci flow solutions is studied.  In order to prevent collapsing, the question is whether we can find a manifold endowed with a $G_2$-structure $(M_\pm,\vp_\pm)$, imbeddings $f(t):M_\pm\longrightarrow M$ and a scaling function $a(t)>0$ so that $a(t)f(t)^*\vp(t)$ converges smoothly to $\vp_\pm$, as $t\to T_\pm$.  Sometimes it is only possible to obtain that along a subsequence $t_k\to T_\pm$ and the diffeomorphisms $f(t_k)$ may be only defined on open subsets $\Omega_k$ exhausting $M_\pm$.  Thus $M_\pm$ might be non-homeomorphic to $M$. This is called {\it pointed} or {\it Cheeger-Gromov} convergence of $(M,a(t)\vp(t))$ toward $(M_\pm,\vp_\pm)$ (see \cite[Sec.\ 7]{Lty}).

The following natural questions arise:

\begin{itemize}
\item What is the simplest quantity that, as long as it remains bounded, it prevents the formation of a singularity? (see \cite[Theorem 1.3]{Lty}).

\item Does the scalar curvature of $g_{\vp(t)}$ converge to $-\infty$, as $t\to T_+<\infty$, for any closed Laplacian flow solution $\vp(t)$?  This is equivalent to the blowing up of the torsion at a finite time singularity.
\end{itemize}

\subsection{Laplacian solitons}\label{LS-sec}
A Laplacian flow solution $\vp(t)$ on a differentiable manifold $M$ is called {\it self-similar} if $\vp(t)=c(t)f(t)^*\vp(0)$ for some $c(t)\in\RR^*$ and $f(t)\in\Diff(M)$.  It is well known that the existence of a self-similar solution starting at a $G_2$-structure $\vp$ is equivalent to the following condition:
$$
\Delta_\vp\vp=c\vp+\lca_{X}\vp, \qquad \mbox{for some}\quad c\in\RR, \quad X\in\mathfrak{X}(M)\; \mbox{(complete)},
$$
where $\lca_X$ denotes Lie derivative.  In that case, $c(t)=\left(\frac{2}{3}ct+1\right)^{3/2}$.  In analogy to the terminology used in Ricci flow theory, $\vp$ is called a {\it Laplacian soliton} and one says it is {\it expanding}, {\it steady} or {\it shrinking}, if $c>0$, $c=0$ or $c<0$, respectively.  Note that the maximal interval of existence $(T_-,T_+)$ for these self-similar solutions equals $(-\tfrac{3}{2c},\infty)$, $(-\infty,\infty)$ and $(-\infty,-\tfrac{3}{2c})$, respectively.

Results on Laplacian solitons in the literature include:

\begin{itemize}
\item \cite[Corollary 1]{Lin} There are no compact shrinking Laplacian solitons, and the only compact steady Laplacian solitons are the torsion-free $G_2$-structures (see also \cite[Proposition 9.4]{Lty} for a shorter proof in the closed case).

\item Any nearly parallel $G_2$-structure $\vp$ satisfies $\Delta_\vp\vp=c^2\vp$ and so is a coclosed expanding Laplacian soliton.  Examples are given by the round and squashed spheres (see \cite[Section 4.1]{WssWtt2}).

\item \cite[Section 6]{KrgMcKTsu} Examples of non-compact expanding coclosed Laplacian solitons which are not nearly parallel.  However, they still are all {\it eigenforms} (i.e.\ $\Delta_\vp\vp=c\vp$ for some $c\in\RR$).

\item \cite[Proposition 9.1]{Lty} The only compact and closed Laplacian solitons which are eigenforms are the torsion-free $G_2$-structures.

\item \cite[Section 7]{BF} There is a left-invariant closed $G_2$-structure on a nilpotent Lie group which is an expanding Laplacian soliton and is not an eigenform (see also \cite{Ncl}, where a closed expanding Laplacian soliton has been found on seven of the twelve nilpotent Lie groups admitting a closed $G_2$-structure).
\end{itemize}

The following are natural questions:

\begin{itemize}
\item Are there compact and closed expanding Laplacian solitons?

\item Are there compact expanding Laplacian solitons other than nearly parallel $G_2$-structures?

\item Is any compact Laplacian soliton {\it gradient} (i.e.\ $X=\nabla f$ for some $f\in C^\infty(M)$)?

\item Given a Laplacian flow solution $(M,\vp(t))$, does $(M,a(t)\vp(t))$ converge in the pointed sense to some Laplacian soliton $(M_\pm,\vp_\pm)$, as $t\to T_\pm$, for some scaling function $a(t)>0$?
\end{itemize}

\subsection{Closed $G_2$-structures}
A $G_2$-structure on a $7$-manifold $M$ is closed if and only if the torsion forms $\tau_0$, $\tau_1$ and $\tau_3$ all vanish (see \eqref{dphi}).  Thus only the torsion form $\tau_2$ survives for a closed $G_2$-structure $\vp$.  In that case, the $2$-form $\tau_2$ will be denoted by $\tau_\vp$, and it holds that
\begin{equation}\label{tauphi}
\tau_\vp=-\ast_\vp d\ast_\vp\vp, \qquad d\tau_\vp=\Delta_\vp\vp.
\end{equation}
According to \eqref{nondeg2}, there exists a unique operator $Q_\vp\in\qg(\vp)\subset\End(TM)$ such that
\begin{equation}\label{defQphi}
\theta(Q_\vp)\vp=\Delta_\vp\vp.
\end{equation}
It is proved in \cite[Sec.\ 2.2]{Lty} that $Q_\vp\in\sym(TM)$ for any closed $\vp$ (i.e.\ $\Delta_\vp\vp\in\Omega^3_1M\oplus\Omega^3_{27}M$, or equivalently, its $\Omega^3_{7}M$-component vanishes); moreover, the following formula can be easily deduced from \cite[Proposition 2.2]{Lty} and also from \cite[4.37]{Bry}.  

\begin{proposition}\label{Qphi-form}
For any closed $G_2$-structure $\vp$,
$$
Q_\vp = \Ricci_\vp - \frac{1}{12}\tr\left(\tau_\vp^2\right)I + \unm\tau_\vp^2,
$$
where $\Ricci_\vp$ is the Ricci operator of $(M,g_\vp)$ and $\tau_\vp\in\sog(TM)$ also denotes the skew-symmetric operator determined by the $2$-form $\tau_\vp$ (i.e.\ $\tau_\vp=\la\tau_\vp\cdot,\cdot\ra_\vp$). In particular,
\begin{itemize}
\item[(i)] $|\tau_\vp|^2=-\unm\tr{\tau_\vp^2}$.

\item[(ii)] The scalar curvature of $(M,g_\vp)$ is given by
$$
R(g_\vp)=-\unm|\tau_\vp|^2 = \unc\tr{\tau_\vp^2} = \frac{3}{2}\tr{Q_\vp}.
$$
\item[(iii)] $R(g_\vp)\leq 0$ and it vanishes if and only if $\vp$ is torsion-free.
\end{itemize}
\end{proposition}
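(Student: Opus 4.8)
The plan is to obtain the formula by rewriting the known curvature formulas for closed $G_2$-structures, namely Bryant's \cite[4.37]{Bry} and the equivalent Laplacian flow formula of Lotay--Wei \cite[Proposition 2.2]{Lty}, in terms of the operator $Q_\vp$, via the dictionary of Remark \ref{Lty-rem} between $\iop$, $\jop$ and the $Q$-operators. Part (i) is elementary and independent of the main formula: regarding $\tau_\vp$ as a skew-symmetric operator in $\sog(TM)$, in a local $g_\vp$-orthonormal coframe $\{ e^i\}$ one has $\tau_\vp=\sum_{i<j}\tau_\vp(e_i,e_j)\,e^i\wedge e^j$, so $|\tau_\vp|^2=\sum_{i<j}\tau_\vp(e_i,e_j)^2$, while $\tr(\tau_\vp^2)=-\tr(\tau_\vp\tau_\vp^t)=-\sum_{i,j}\tau_\vp(e_i,e_j)^2=-2|\tau_\vp|^2$; and (iii) will be immediate once (ii) is in hand. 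So the real content is the main formula.

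First I would note that, since $\vp$ is closed, $\Delta_\vp\vp=\dif\tau_\vp$ by \eqref{tauphi}, and that by \cite[Sec.\ 2.2]{Lty} the $\Omega^3_7M$-component of $\dif\tau_\vp$ vanishes, so $\Delta_\vp\vp\in\Omega^3_1M\oplus\Omega^3_{27}M$ and $Q_\vp\in\sym(TM)$. In particular \eqref{jop} applies to $\psi=\Delta_\vp\vp$, and since $Q_{\Delta_\vp\vp}=Q_\vp$ by \eqref{defQphi}, it gives $\jop(\dif\tau_\vp)=-2\tr(Q_\vp)I-4Q_\vp$. On the other hand, once put in normalized form, Bryant's formula for the Ricci operator of a closed $G_2$-structure \cite[4.37]{Bry} reads $\Ricci_\vp=-\tfrac14\jop(\dif\tau_\vp)-\tfrac12\tau_\vp^2$ (if instead one starts from the equivalent expression for $\Delta_\vp\vp$ itself, the translation via $\iop(Q_\vp)=-2\Delta_\vp\vp$ from \eqref{iop} is even more direct). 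Substituting the above expression for $\jop(\dif\tau_\vp)$ yields
$$
Q_\vp=\Ricci_\vp+\tfrac12\tau_\vp^2-\tfrac12\tr(Q_\vp)I .
$$

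It then remains only to identify $\tr(Q_\vp)$. On the one hand, since $\theta(I)\vp=-3\vp$ and $\theta(\sym_0(TM))\vp=\Omega^3_{27}M$ is $g_\vp$-orthogonal to $\vp$, decomposing $Q_\vp=\tfrac17\tr(Q_\vp)I+Q_0$ with $Q_0\in\sym_0(TM)$ gives $\la\Delta_\vp\vp,\vp\ra_\vp=\la\theta(Q_\vp)\vp,\vp\ra_\vp=-3\tr(Q_\vp)$ (using $|\vp|_\vp^2=7$). On the other hand, for closed $\vp$ one has $\dif\psi=\tau_\vp\wedge\vp$, while $\tau_\vp\wedge\psi=0$ because $\tau_\vp\in\Omega^2_{14}M$; hence, expanding $0=\dif(\tau_\vp\wedge\psi)$ and using the standard identity $\tau_\vp\wedge\vp=-\ast_\vp\tau_\vp$,
$$
\la\Delta_\vp\vp,\vp\ra_\vp\,\vol_\vp=\dif\tau_\vp\wedge\psi=-\tau_\vp\wedge\dif\psi=-\tau_\vp\wedge\tau_\vp\wedge\vp=\tau_\vp\wedge\ast_\vp\tau_\vp=|\tau_\vp|^2\,\vol_\vp .
$$
Therefore $\tr(Q_\vp)=-\tfrac13|\tau_\vp|^2=\tfrac16\tr(\tau_\vp^2)$ by (i), and substituting into the displayed identity gives $-\tfrac12\tr(Q_\vp)I=-\tfrac1{12}\tr(\tau_\vp^2)I$, which is exactly the asserted formula. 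Taking its trace then gives $\tr(Q_\vp)=R(g_\vp)-\tfrac7{12}\tr(\tau_\vp^2)+\tfrac12\tr(\tau_\vp^2)=R(g_\vp)-\tfrac1{12}\tr(\tau_\vp^2)$, which combined with $\tr(Q_\vp)=\tfrac16\tr(\tau_\vp^2)$ yields $R(g_\vp)=\tfrac14\tr(\tau_\vp^2)=-\tfrac12|\tau_\vp|^2=\tfrac32\tr(Q_\vp)$, that is (ii); and then (iii) follows at once, since $R(g_\vp)=-\tfrac12|\tau_\vp|^2\le 0$ with equality iff $\tau_\vp=0$, i.e.\ (as $\vp$ is closed) iff $\vp$ is torsion-free.

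The step that genuinely requires care, and the only real obstacle here, is putting \cite[4.37]{Bry} (equivalently \cite[Proposition 2.2]{Lty}) into the normalized form $\Ricci_\vp=-\tfrac14\jop(\dif\tau_\vp)-\tfrac12\tau_\vp^2$ used above. This is purely a matter of reconciling conventions: Bryant's map $\iop$ differs by a factor $2$ from the map $\iota$ of \cite{Lty}; the $\epsilon$- and Hodge-star sign conventions have to be matched; and the full torsion tensor $T$ of \cite{Lty} equals $-\tfrac12\tau_\vp$ as a skew-symmetric endomorphism of $TM$ for closed $\vp$, so that $T^2=\tfrac14\tau_\vp^2$ and $|T|^2=\tfrac14|\tau_\vp|^2$. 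None of this is deep, but it is where the precise constants in the statement get pinned down.
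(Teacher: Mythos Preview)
Your proof is correct and follows essentially the same route as the paper: both derive the main formula by translating Bryant's \cite[4.37]{Bry} / Lotay--Wei's \cite[Proposition 2.2]{Lty} into the $Q_\vp$-language via Remark \ref{Lty-rem}, using $T=-\tfrac12\tau_\vp$. The one difference is that for parts (ii)--(iii) the paper simply cites \cite[Corollary 2.4]{Lty}, whereas you give a self-contained computation of $\tr(Q_\vp)$ via the identity $\la\Delta_\vp\vp,\vp\ra_\vp\,\vol_\vp=d\tau_\vp\wedge\psi=|\tau_\vp|^2\,\vol_\vp$; this makes your argument somewhat more explicit but is not a genuinely different approach.
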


\begin{proof}
The formula for $Q_\vp$, which coincides with $-h$ in the notation of \cite{Lty} (see Remark \ref{Lty-rem}), follows from (2.24) and Proposition 2.2 (see also formula (3.4)) in that paper (note also that $T=-\unm\tau_\vp$).  The remaining items follow from \cite[Corollary 2.4]{Lty}.
\end{proof}

\section{Laplacian flow on homogeneous spaces}\label{LFhom-sec}

In this section, we describe an approach developed in \cite{BF} to study geometric flows and their solitons on homogeneous spaces.  We need to state the two main theorems in \cite{BF} in the case of the Laplacian flow.

\subsection{Homogeneous $G_2$-structures and the Laplacian flow}\label{homog-sec}
A $7$-manifold endowed with a $G_2$-structure $(M,\vp)$ is said to be {\it homogeneous} if its automorphism group
$$
\Aut(M,\vp):=\{ f\in\Diff(M):f^*\vp=\vp\},
$$
acts transitively on $M$.  It is known that $\Aut(M,\vp)$ is a Lie group, it is indeed a closed subgroup of the Lie group $\Iso(M,g_\vp)$ of all isometries of the Riemannian manifold $(M,g_\vp)$.  Each Lie subgroup $G\subset\Aut(M,\vp)$ which is transitive on $M$ gives rise to a presentation of $M$ as a {\it homogeneous space} $G/K$, where $K$ is the isotropy subgroup of $G$ at some point $o\in M$, and $\vp$ becomes a $G$-invariant $G_2$-structure on the homogeneous space $M=G/K$. As in the Riemannian case, $G$ is closed in $\Aut(M,\vp)$ if and only if $K$ is compact.  In the presence of a {\it reductive} decomposition $\ggo=\kg\oplus\pg$ (i.e.\ $\Ad(K)\pg\subset\pg$) for the homogeneous space $G/K$, where $\ggo$ and $\kg$ respectively denote the Lie algebras of $G$ and $K$,  every $G$-invariant $G_2$-structure on $G/K$ is determined by a positive $3$-form $\vp$ on $\pg\equiv T_oM$ (the tangent space at the origin $o$ of $G/K$) which is $\Ad(K)$-invariant.  This means that $(\Ad(k)|_\pg)\cdot\vp=\vp$ for any $k\in K$, or equivalently if $K$ is connected, $\theta(\ad{Z}|_\pg)\vp=0$ for all $Z\in\kg$.  Note that the corresponding metric $g_\vp$ is precisely the $G$-invariant metric on $G/K$ whose value at the origin $o$ is $\ip_\vp$.  The existence of a reductive decomposition for $G/K$ is therefore guaranteed, even if $G/K$ is only almost-effective rather than effective as above.  Anyway, one can just work with $\pg=\ggo/\kg$ if a reductive decomposition is preferred not to be chosen.

On a homogeneous space $M=G/K$ with a reductive decomposition $\ggo=\kg\oplus\pg$, if we require $G$-invariance of $\vp(t)$ for all $t$, then the Laplacian flow equation \eqref{LF} becomes equivalent to the ODE for a one-parameter family $\vp(t)$ of $\Ad(K)$-invariant $3$-forms on the single vector space $\pg$ given by
\begin{equation}\label{LF2}
\ddt \vp(t) = \Delta_{\vp(t)}\vp(t),
\end{equation}
where $\Delta_{\vp(t)}:(\Lambda^3\pg^*)^K\longrightarrow(\Lambda^3\pg^*)^K$ is the Hodge Laplacian operator defined by $\vp(t)$ on the space $\Lambda^3\pg^*)^K$ of all $\Ad(K)$-invariant $3$-forms of $\pg$ (i.e.\ $G$-invariant differential $3$-forms of $G/K$).  Indeed, the solutions to \eqref{LF2} are the integral curves of the vector field $X$ on the subspace $(\Lambda^3\pg^*)^K\subset\Lambda^3\pg^*$ defined by $X_\psi:=\Delta_\psi\psi$ for any $\psi\in(\Lambda^3\pg^*)^K$.  The $\Ad(K)$-invariance of the $3$-form $\Delta_\psi\psi$ of $\pg$ follows from the $G$-invariance of the differential $3$-form $\Delta_\psi\psi$ of $M$.

Thus short-time existence (forward and backward) and uniqueness (among $G$-invariant ones) of solutions follow.  Moreover, if $(M,\vp)$ is homogeneous, then all these $G$-invariant Laplacian flow solutions $\vp(t)$ on $M$ starting at $\vp$ for different transitive groups $G$ must coincide as such groups are all contained in the full automorphism group $\Aut(M,\vp)$.  This implies that given $t_0$, since $\Aut(M,\vp)\subset\Aut(M,\vp(t))$ for all $t$, we have that $\vp(t+t_0)$ is the $\Aut(M,\vp)$-invariant solution starting at $\vp(t_0)$.  But thus $\vp(t+t_0)$ is also $\Aut(M,\vp(t_0))$-invariant and so $\Aut(M,\vp(t_0))\subset\Aut(M,\vp)$ by evaluating at $t=-t_0$.

To sum up,

\begin{proposition}\label{LF-exist}
Given a homogeneous $G_2$-structure $(M,\vp)$, there exists a unique Laplacian flow solution $\vp(t)$ on $M$ starting at $\vp$ which is $\Aut(M,\vp)$-invariant.  Furthermore:
\begin{itemize}
\item[(i)] $\vp(t)$ is defined in a maximal interval of time $(T_-,T_+)$, with $T_-<0<T_+$.

\item[(ii)] $\Aut(M,\vp(t))=\Aut(M,\vp)$ for all $t\in(T_-,T_+)$.

\item[(iii)] For any transitive Lie group of automorphisms $G$ of $(M,\vp)$, $\vp(t)$ is the unique $G$-invariant solution on the homogeneous space $M=G/K$.
\end{itemize}
\end{proposition}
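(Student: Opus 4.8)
The plan is to restrict the Laplacian flow to the $\Aut(M,\vp)$-invariant $3$-forms, where it becomes an autonomous ODE on a finite-dimensional vector space, and then to read off all four assertions from Picard--Lindel\"of together with the uniqueness of solutions of such ODEs. Set $G_0:=\Aut(M,\vp)$. Since $G_0$ is a closed subgroup of $\Iso(M,g_\vp)$, its isotropy subgroup $K_0$ at a point $o\in M$ is contained in the compact isotropy group of $\Iso(M,g_\vp)$ at $o$, hence $K_0$ is compact and $M=G_0/K_0$ has a reductive decomposition $\ggo_0=\kg_0\oplus\pg_0$. As recalled before the statement, $G_0$-invariant $G_2$-structures on $M$ correspond to positive $3$-forms in the open cone $\mathcal{U}\subset(\Lambda^3\pg_0^*)^{K_0}$ of $\Ad(K_0)$-invariant positive $3$-forms on $\pg_0\equiv T_oM$, and restricting \eqref{LF} to this sector is precisely the ODE \eqref{LF2}, i.e.\ the equation for integral curves of the vector field $X_\psi:=\Delta_\psi\psi$ on $\mathcal{U}$ (that $\Delta_\psi\psi$ is again $G_0$-invariant follows because $G_0$ acts by orientation-preserving isometries of $g_\psi$, which commute with $\Delta_\psi$). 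First I would check that $X$ is smooth on $\mathcal{U}$: the maps $\psi\mapsto\ip_\psi,\vol_\psi,\ast_\psi$ depend rationally on $\psi$ on $\mathcal{U}$ (the metric being recovered from $\psi$ via \eqref{ip-phi}, up to dividing by a polynomial in $\psi$ that does not vanish on $\mathcal{U}$), while $d$ on invariant forms is a fixed linear map, so $\Delta_\psi=\ast_\psi d\ast_\psi d-d\ast_\psi d\ast_\psi$ and hence $X$ are smooth. Picard--Lindel\"of then yields a unique maximal integral curve $\vp(t)$ of $X$ with $\vp(0)=\vp$, defined on an open interval $(T_-,T_+)$ with $-\infty\le T_-<0<T_+\le\infty$ and staying in $\mathcal{U}$; this $\vp(t)$ is $\Aut(M,\vp)$-invariant by construction and solves \eqref{LF}, which proves existence and (i). The smooth dependence of the Hodge Laplacian on $\vp$ is the only analytic input, and I expect it to be the one point that needs genuine (if routine) care; the rest is bookkeeping.

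For (iii): if $G\subset G_0$ is transitive with isotropy $K$ and reductive decomposition $\ggo=\kg\oplus\pg$, then $\vp(t)$, being $G_0$-invariant, is a fortiori $G$-invariant and therefore an integral curve of the analogous vector field $\psi\mapsto\Delta_\psi\psi$ on the cone of $\Ad(K)$-invariant positive $3$-forms in $(\Lambda^3\pg^*)^K$; by the uniqueness part of Picard--Lindel\"of it is the unique $G$-invariant Laplacian flow solution on $G/K$ starting at $\vp$. In particular $\vp(t)$ is simultaneously the unique $G$-invariant solution for every transitive $G\subset G_0$, which also gives the uniqueness asserted in the proposition.

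For (ii): from $G_0$-invariance of $\vp(t)$ we get $G_0\subset\Aut(M,\vp(t))$ for all $t\in(T_-,T_+)$. Conversely, fix $t_0\in(T_-,T_+)$. The $G_2$-structure $(M,\vp(t_0))$ is homogeneous, since $\Aut(M,\vp(t_0))\supset G_0$ is already transitive, so by the part already proved there is a unique $\Aut(M,\vp(t_0))$-invariant Laplacian flow solution $\wt\vp(t)$ with $\wt\vp(0)=\vp(t_0)$; as $G_0\subset\Aut(M,\vp(t_0))$, this $\wt\vp(t)$ is in particular $G_0$-invariant, hence an integral curve of $X$ on $\mathcal{U}$. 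But by autonomy of \eqref{LF2} the shifted curve $t\mapsto\vp(t+t_0)$ is also an integral curve of $X$ with value $\vp(t_0)$ at $t=0$, so uniqueness forces $\vp(t+t_0)=\wt\vp(t)$; thus $\vp(t+t_0)$ is $\Aut(M,\vp(t_0))$-invariant for all $t$, and evaluating at $t=-t_0$ shows that $\vp=\vp(0)$ is $\Aut(M,\vp(t_0))$-invariant, i.e.\ $\Aut(M,\vp(t_0))\subset\Aut(M,\vp)$. Combining the two inclusions gives $\Aut(M,\vp(t_0))=\Aut(M,\vp)$, completing (ii).
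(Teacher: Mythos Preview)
Your proof is correct and follows essentially the same approach as the paper: reduce to the ODE \eqref{LF2} on the finite-dimensional space of invariant positive $3$-forms, invoke local existence and uniqueness, and for (ii) use the time-shift $t\mapsto\vp(t+t_0)$ together with uniqueness to get $\Aut(M,\vp(t_0))\subset\Aut(M,\vp)$ by evaluating at $t=-t_0$. Your explicit check that $\psi\mapsto\Delta_\psi\psi$ is smooth is a welcome addition that the paper leaves implicit.
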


Since the uniqueness of Laplacian flow solutions has only been established in the compact case, we do not know a priori if there are homogeneous solutions other than $\vp(t)$ starting at a noncompact homogeneous $(M,\vp)$.  This seems to be very unlikely though.

Two homogeneous spaces endowed with invariant $G_2$-structures $(G/K,\vp)$ and $(G'/K',\vp')$ are called {\it equivariantly equivalent} if there exists an equivariant diffeomorphism $f:G/K\longrightarrow G'/K'$ (i.e. $f$ is determined by a Lie group isomorphism $G\longrightarrow G'$ taking $K$ onto $K'$) such that $\vp=f^*\vp'$.

\subsection{The space of homogeneous spaces}\label{hm}
Let us fix a $(q+7)$-dimensional real vector space $\ggo$ together with a direct sum decomposition
\begin{equation}\label{fixdec}
\ggo=\kg\oplus\pg, \qquad \dim{\kg}=q, \qquad \dim{\pg=7}.
\end{equation}
In order to study invariant $G_2$-structures on homogeneous spaces, we fix in addition a positive $3$-form $\vp$ on $\pg$ and consider the subset $\hca_q:=\hca(\ggo=\kg\oplus\pg,\vp)\subset\Lambda^2\ggo^*\otimes\ggo$ of those skew-symmetric bilinear forms $\mu:\ggo\times\ggo\longrightarrow\ggo$ such that:

\begin{itemize}\label{hqn}
\item [(h1)]  $\mu$ satisfies the Jacobi condition, $\mu(\kg,\kg)\subset\kg$ and $\mu(\kg,\pg)\subset\pg$.
\item[ ]
\item[(h2)] If $G_\mu$ denotes the simply connected Lie group with Lie algebra $(\ggo,\mu)$ and $K_\mu$ is the connected Lie subgroup of $G_\mu$ with Lie algebra $\kg$, then $K_\mu$ is closed in $G_\mu$.
\item[ ]
\item[(h3)] $\{ Z\in\kg:\mu(Z,\pg)=0\}=0$.
\item[ ]
\item[(h4)] $\theta(\ad_{\mu}{Z}|_{\pg})\vp=0$ for all $Z\in\kg$, or equivalently, $\ad_\mu{\kg}|_\pg\subset\ggo_2(\vp)$.
\end{itemize}

It follows that each $\mu\in\hca_q$ defines a unique (almost-effective and simply connected) homogeneous space endowed with an invariant $G_2$-structure,
\begin{equation}\label{hsmu-gamma}
\mu\in\hca_q\rightsquigarrow\left(G_{\mu}/K_{\mu},\vp\right),
\end{equation}
with reductive decomposition $\ggo=\kg\oplus\pg$.  Conversely, any homogeneous $G_2$-structure is equivariantly equivalent to some $\mu\in\hca_q$, for some $q$, up to covering.

For $q=0$, conditions (h2)-(h4) trivially hold and so $\hca_0$ is simply the variety $\lca$ of $7$-dimensional Lie algebras.  We are therefore identifying each $\mu\in\lca$ with $(G_\mu,\vp)$, the simply connected Lie group $G_\mu$ endowed with the left-invariant $G_2$-structure determined by the fixed positive $3$-form $\vp$ we have on the Lie algebra $(\ggo,\mu)$ of $G_\mu$.

\subsection{Bracket flow}\label{BF-sec}
For each $\mu\in\hca_q$, the Hodge Laplacian of $\left(G_{\mu}/K_{\mu},\vp\right)$ is given by
$$
\Delta_\mu:(\Lambda^3\pg^*)^{K_\mu}\longrightarrow(\Lambda^3\pg^*)^{K_\mu}, \qquad \Delta_\mu:= \ast d_\mu\ast d_\mu-d_\mu\ast d_\mu\ast,
$$
where $d_\mu$ is the differential operator on the manifold $G_\mu/K_\mu$ and $\ast$ denotes the fixed Hodge star operator defined by $\vp$ on $\Lambda^3\pg^*$.
\begin{example}\label{n2-1}
For each $a,b,c,d\in\RR$, consider the $7$-dimensional nilpotent Lie algebra $\ggo$ with basis $\{ e_1,\dots,e_7\}$ and Lie bracket $\mu=\mu_{a,b,c,d}\in\hca_0=\lca$ defined by
$$
\left\{\begin{array}{l}
\mu(e_1,e_2)=-ae_5-be_6, \\ \mu(e_1,e_3)=-ce_5-de_6;
\end{array}\right. \quad\mbox{or equivalently}, \quad
\left\{\begin{array}{l}
d_\mu e^5=ae^{12}+ce^{13}, \\ d_\mu e^6=be^{12}+de^{13}.
\end{array}\right.
$$
The $3$-form
$$
\vp=e^{147}+e^{267}+e^{357}+e^{123}+e^{156}+e^{245}-e^{346},
$$
is positive and so it determines a left-invariant $G_2$-structure $\vp$ on the simply connected Lie group $G_\mu$ with Lie algebra $(\ggo,\mu)$.  It is easy to check that $d_\mu\vp=(d-a)e^{1237}-(b+c)e^{1234}$, which implies that $\vp$ is closed if and only if $d=a$ and $c=-b$.  Thus for a closed $G_2$-structure $(G_\mu,\vp)$ we have that
\begin{align}
\ast\vp =& e^{2356}-e^{1345}-e^{1246}+e^{4567}+e^{2347}-e^{1367}+e^{1257}, \notag \\
d_\mu\ast\vp =& -ae^{12467}+be^{13467}+be^{12457}+ae^{13457}, \notag \\
\ast d_\mu\ast\vp =& ae^{35}+be^{25}+be^{36}-ae^{26}, \label{n2-1eq}\\
d_\mu\ast d_\mu\ast\vp =& -2(a^2+b^2)e^{123}, \notag
\end{align}
and so $\Delta_\mu\vp=2(a^2+b^2)e^{123}$.
\end{example}

We denote by $Q_\mu$ the operator $Q_{\Delta_\mu\vp}\in\qg(\vp)\subset\glg(\pg)$ defined by \eqref{nondeg2}, i.e.\
$$
\theta(Q_\mu)\vp=\Delta_\mu\vp.
$$

\begin{example}\label{n2-2}
It follows from \eqref{n2-1eq} that the torsion $2$-form of the closed $G_2$-structures $(G_\mu,\vp)$ in the above example is given by $\tau_\mu= -ae^{35}-be^{25}-be^{36}+ae^{26}$, and as a matrix by
$$
\tau_\mu=\left[\begin{smallmatrix}
0&&&&&&\\
&0&&&b&-a&\\
&&0&&a&b&\\
&&&0&&&\\
&-b&-a&&0&&\\
&a&-b&&&0&\\
&&&&&&0
\end{smallmatrix}\right].
$$
Thus $\tau_\mu^2=(a^2+b^2)\Diag(0,-1,-1,0,-1,-1,0)$, and since the Ricci operator of $(G_\mu,\ip_\vp)$ equals
$$
\Ricci_\mu=(a^2+b^2)\Diag(-1,-\tfrac{1}{2},-\tfrac{1}{2},0,\tfrac{1}{2},\tfrac{1}{2},0),
$$
we obtain from Proposition \ref{Qphi-form} that the operator $Q_\mu = \Ricci_\mu - \frac{1}{12}\tr\left(\tau_\mu^2\right)I + \unm\tau_\mu^2$ is given by
$$
Q_\mu=(a^2+b^2)\Diag(-\tfrac{2}{3},-\tfrac{2}{3},-\tfrac{2}{3},\tfrac{1}{3},\tfrac{1}{3},\tfrac{1}{3},\tfrac{1}{3}).
$$
\end{example}

Consider the following evolution equation for a family $\mu(t)\in \Lambda^2\ggo^*\otimes\ggo$ of brackets, called the {\it bracket flow}:
\begin{equation}\label{BF}
\ddt\mu(t)=\delta_{\mu(t)}\left(\left[\begin{smallmatrix} 0&0\\ 0&Q_{\mu(t)} \end{smallmatrix}\right]\right), \qquad\mu(0)=\lb,
\end{equation}
where $\delta_\mu:\glg(\ggo)\longrightarrow\Lambda^2\ggo^*\otimes\ggo$ is defined in terms of the derivative of the $\Gl(\ggo)$-action $h\cdot\mu:=h\mu(h^{-1}\cdot,h^{-1}\cdot)$ by
\begin{equation}\label{delta}
\delta_\mu(E):=\mu(E\cdot,\cdot)+\mu(\cdot,E\cdot)-E\mu(\cdot,\cdot), \qquad\forall E\in\glg(\ggo).
\end{equation}
The set $\hca_q$ is invariant under the bracket flow and only $\mu(t)|_{\pg\times\pg}$ is actually evolving (see \cite[Lemma 1]{BF}).

\begin{example}\label{n2-3}
For $(G_\mu,\vp)$ in Examples \ref{n2-1} and \ref{n2-2}, it is straightforward to check that $\delta_\mu(Q_\mu)=-\frac{5}{3}(a^2+b^2)\mu$, so the bracket flow equation is given by $\ddt\mu(t)=-\frac{5}{12}|\mu(t)|^2\mu(t)$ and the bracket flow solution by $\mu(t)=c(t)\mu(0)$, for some positive strictly decreasing function $c(t)$ defined on $(T_-,\infty)$, $-\infty<T_-$, such that $c(t)\to 0$, as $t\to\infty$.
\end{example}

We are now ready to state the first main result from \cite{BF} applied to $G_2$-structures.  Let $(G/K,\vp)$ be a simply connected homogeneous space (assume $G$ simply connected and $K$ connected) endowed with a $G$-invariant $G_2$-structure $\vp$ and a reductive decomposition $\ggo=\kg\oplus\pg$. We consider the one-parameter families
$$
(G/K,\vp(t)), \qquad \left(G_{\mu(t)}/K_{\mu(t)},\vp\right),
$$
where $\vp(t)$ is the solution to the Laplacian flow \eqref{LF2} starting at $\vp$ and $\mu(t)$ is the solution to the bracket flow \eqref{BF} starting at the Lie bracket $\lb$ of $\ggo$, the Lie algebra of $G$.  Note that $\ggo=\kg\oplus\pg$ is a reductive decomposition for each of the homogeneous spaces involved.

\begin{theorem}\label{BF-thm}\cite[Theorem 5]{BF}
There exist equivariant diffeomorphisms $f(t):G/K\longrightarrow G_{\mu(t)}/K_{\mu(t)}$ such that
$$
\vp(t)=f(t)^*\vp, \qquad\forall t\in(T_-,T_+),
$$
i.e.\ $(G/K,\vp(t))$ and $\left(G_{\mu(t)}/K_{\mu(t)},\vp\right)$ are equivariantly equivalent.  Moreover, each $f(t)$ can be chosen to be the equivariant diffeomorphism defined by the Lie group isomorphism $G\longrightarrow G_{\mu(t)}$ with derivative $\tilde{h}(t):=\left[\begin{smallmatrix} I&0\\ 0&h(t) \end{smallmatrix}\right]:\ggo\longrightarrow\ggo$, where $h(t):=df(t)|_o:\pg\longrightarrow\pg$ is the solution to any of the following ODE's:
\begin{itemize}
\item[(i)] $\ddt h(t)=-h(t)Q_{\vp(t)}$, $h(0)=I$, where $Q_{\vp(t)}\in\qg_{\vp(t)}$ satisfies $\theta(Q_{\vp(t)})\vp(t)=\Delta_{\vp(t)}\vp(t)$.
\item[(ii)] $\ddt h(t)=-Q_{\mu(t)} h(t)$, $h(0)=I$, where $Q_\mu\in\qg_{\vp}$ satisfies $\theta(Q_\mu)\vp=\Delta_\mu\vp$.
\end{itemize}
The following conditions also hold:
$$
\vp(t)=h(t)^*\vp=h(t)^{-1}\cdot\vp, \qquad \mu(t)=\tilde{h}(t)\cdot\lb.
$$
\end{theorem}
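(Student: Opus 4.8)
The plan is to realize Theorem~\ref{BF-thm} as a gauge-fixing (DeTurck-type) argument: keep the positive $3$-form $\vp$ on $\pg$ fixed, absorb the motion of the invariant $3$-form on $G/K$ into a curve $h(t)\in\Gl(\pg)$, and let the Lie bracket evolve instead. Let $\mu(t)$ be the bracket flow solution \eqref{BF} on its maximal interval; since $\hca_q$ is preserved by the bracket flow, $Q_{\mu(t)}$ is defined throughout, and let $h(t)\in\Gl(\pg)$ be the solution of the \emph{linear} ODE $\ddt h=-Q_{\mu(t)}h$, $h(0)=I$ (ODE~(ii)), which exists on the whole interval and stays invertible. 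Set $\tilde h(t)=\left[\begin{smallmatrix}I&0\\0&h(t)\end{smallmatrix}\right]$.

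First I would show $\mu(t)=\tilde h(t)\cdot\lb$. Using the defining property of $\delta$ as (minus) the infinitesimal $\Gl(\ggo)$-action, so that $\ddt(g(t)\cdot\nu)=-\delta_{g(t)\cdot\nu}(\dot g\,g^{-1})$, together with $\dot{\tilde h}\tilde h^{-1}=\left[\begin{smallmatrix}0&0\\0&-Q_{\mu(t)}\end{smallmatrix}\right]$, the curve $\nu(t):=\tilde h(t)\cdot\lb$ satisfies $\ddt\nu=\delta_{\nu}\!\left(\left[\begin{smallmatrix}0&0\\0&Q_{\mu(t)}\end{smallmatrix}\right]\right)$ with $\nu(0)=\lb$; regarding $Q_{\mu(t)}$ as a given time-dependent operator this is a linear ODE in the unknown, solved also by $\mu(t)$ itself, hence $\nu\equiv\mu$ by uniqueness. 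Then $\tilde h(t)\colon(\ggo,\lb)\to(\ggo,\mu(t))$ is a Lie algebra isomorphism fixing $\kg$ pointwise, so (both $G,G_{\mu(t)}$ simply connected, $K,K_{\mu(t)}$ connected) it integrates to an equivariant diffeomorphism $f(t)\colon G/K\to G_{\mu(t)}/K_{\mu(t)}$ with $df(t)|_o=h(t)$.

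Next I would identify $f(t)^*\vp$ with the Laplacian flow solution. Put $\psi(t):=f(t)^*\vp$, which as a $G$-invariant $3$-form on $G/K$ has value $h(t)^{-1}\cdot\vp$ at the origin, so $\psi(0)=\vp$. Differentiating $h(t)^{-1}\cdot\vp$ by means of $\ddt(g(s)\cdot\eta)|_{s=t}=\theta(\dot g\,g^{-1})(g\cdot\eta)$ for the $\Gl(\pg)$-action on $3$-forms (with $g=h^{-1}$, so $\dot g\,g^{-1}=h^{-1}Q_{\mu(t)}h$) and the equivariance $\theta(gAg^{-1})(g\cdot\eta)=g\cdot\theta(A)\eta$, one gets $\ddt\psi(t)=h(t)^{-1}\cdot\big(\theta(Q_{\mu(t)})\vp\big)=h(t)^{-1}\cdot(\Delta_{\mu(t)}\vp)=f(t)^*(\Delta_{\mu(t)}\vp)$. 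The key analytic input now enters: the Hodge Laplacian is a natural operator, $f^*\circ\Delta_g=\Delta_{f^*g}\circ f^*$ for any diffeomorphism, and the assignments $\psi\mapsto g_\psi$, $\psi\mapsto\ast_\psi$ are equivariant; hence $f(t)^*(\Delta_{\mu(t)}\vp)=\Delta_{f(t)^*\vp}(f(t)^*\vp)=\Delta_{\psi(t)}\psi(t)$. Thus $\psi(t)$ solves the Laplacian flow \eqref{LF2} with $\psi(0)=\vp$, so $\psi(t)=\vp(t)$ by uniqueness (Proposition~\ref{LF-exist}), and the two maximal intervals coincide. This proves $\vp(t)=f(t)^*\vp=h(t)^{-1}\cdot\vp=h(t)^*\vp$, with $\mu(t)=\tilde h(t)\cdot\lb$ already established.

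It remains to see that ODE~(i) characterizes the same curve $h(t)$. From $\vp(t)=h(t)^{-1}\cdot\vp$ and the naturality identity, $\Delta_{\vp(t)}\vp(t)=f(t)^*(\Delta_{\mu(t)}\vp)=\theta\big(h(t)^{-1}Q_{\mu(t)}h(t)\big)\vp(t)$; since conjugation by $h(t)^{-1}$ carries $\ggo_2(\vp)$ and $\ip_\vp$ to $\ggo_2(\vp(t))$ and $\ip_{\vp(t)}$, it carries $\qg_\vp$ isometrically onto $\qg_{\vp(t)}$, so the uniqueness in \eqref{nondeg2} forces $Q_{\vp(t)}=h(t)^{-1}Q_{\mu(t)}h(t)$; then $\ddt h=-Q_{\mu(t)}h=-h\big(h^{-1}Q_{\mu(t)}h\big)=-hQ_{\vp(t)}$, which is ODE~(i). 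I expect the main obstacle to be precisely this naturality step --- matching the geometric identity $f^*\Delta_g=\Delta_{f^*g}f^*$ with its algebraic shadow at the origin --- and, more tediously, keeping the several $\Gl$-actions (on $3$-forms through $\theta$, on brackets through $\delta$, on inner products) consistent in sign; once those are pinned down, the rest is routine ODE-uniqueness bookkeeping.
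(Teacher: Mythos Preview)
The paper does not give its own proof of Theorem~\ref{BF-thm}; it is quoted from \cite[Theorem 5]{BF} and used as a black box, so there is nothing in this paper to compare your argument against directly. That said, your proof is correct and is essentially the standard gauge-fixing argument one expects in \cite{BF}: solve the linear ODE~(ii) for $h(t)$, show $\tilde h(t)\cdot\lb$ and $\mu(t)$ satisfy the same (linear, with $Q_{\mu(t)}$ regarded as given) ODE hence coincide, integrate $\tilde h(t)$ to an equivariant diffeomorphism, and then use naturality of the Hodge Laplacian plus uniqueness of $G$-invariant Laplacian flow solutions to identify $f(t)^*\vp$ with $\vp(t)$; the conjugation identity $Q_{\vp(t)}=h(t)^{-1}Q_{\mu(t)}h(t)$ then reconciles ODEs~(i) and~(ii). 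The only point you leave slightly implicit is the equality of maximal intervals: you obtain the Laplacian-flow interval $\supset$ bracket-flow interval, and the reverse containment requires the symmetric argument starting from ODE~(i) and the Laplacian flow solution; it would be worth saying this in one line.
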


A direct consequence of the theorem is that the maximal interval of time $(T_-,T_+)$ where a solution exists is the same for both flows, so the bracket flow can be used as a tool to study regularity questions on the Laplacian flow (see Section \ref{BFA-sec}).  It is proved for example in \cite[Proposition 4]{BF} that the norm $|\Delta_{\vp(t)}\vp(t)|_{\vp(t)}$ of the velocity of the flow must blow up at a finite-time singularity.  This has been proved in \cite[Theorem 1.6]{Lty} in the case when $M$ is compact and $\vp$ is closed.

The scaling $(G_\mu/K_\mu,c^{-3}\vp)$ is equivariantly equivalent to the element $c\cdot\mu\in\hca_{q,n}$, defined by,
\begin{equation}\label{scmu}
c\cdot\mu|_{\kg\times\kg}=\mu, \qquad c\cdot\mu|_{\kg\times\pg}=\mu, \qquad c\cdot\mu|_{\pg\times\pg}=c^2\mu_{\kg}+c\mu_{\pg},
\end{equation}
where the subscripts denote the $\kg$- and $\pg$-components of $\mu|_{\pg\times\pg}$ given by
\begin{equation}\label{decmu}
\mu(X,Y)=\mu_{\kg}(X,Y)+\mu_{\pg}(X,Y), \qquad \mu_{\kg}(X,Y)\in\kg, \quad \mu_{\pg}(X,Y)\in\pg, \quad\forall X,Y\in\pg.
\end{equation}
Note that $\ip_{c^{-3}\vp}=c^{-2}\ip_\vp$.  The $\RR^*$-action on $\hca_{q,n}$, $\mu\mapsto c\cdot\mu$, may therefore be considered as a {\it geometric scaling} of $(G_\mu/K_\mu,\vp)$ (see \cite[(23)]{BF}).

The previous theorem has the following application on convergence, which follows from \cite[Corollary 4]{BF} and \cite[Theorem 7.1]{Lty}.

\begin{corollary}\label{i-conv}
Assume that $c_k\cdot\mu(t_k)\to\lambda\in\hca_{q,n}$ for some subsequence of times $t_k\to T_\pm$ and numbers $c_k\ne 0$.
\begin{itemize}
\item[(i)] If there is a positive lower bound for the Lie injectivity radii (see \cite[Definition 3]{BF}) of the $G$-invariant metrics $c_k^{-2}g_{\vp(t_k)}$ on $G/K$, then, after possibly passing to a subsequence, $\left(G/K,c_k^{-3}\vp(t_k)\right)$ converge in the pointed (or Cheeger-Gromov) sense to $(G_\lambda/K_\lambda,\vp)$, as $k\to\infty$.

\item[(ii)] In the case of a Lie group $G$ (i.e.\ $K$ trivial), the hypothesis on the Lie injectivity radii in part (i) can be removed.  Moreover, if either $G_\lambda$ is compact or $G$ is completely solvable, then $(G,c_k^{-3}\vp(t_k))$ smoothly converges up to pull-back by diffeomorphisms to $(G_\lambda,\vp)$, as $k\to\infty$.
\end{itemize}
\end{corollary}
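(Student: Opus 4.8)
The strategy is to translate the rescaled Laplacian flow solutions into the language of brackets via Theorem \ref{BF-thm} and the geometric scaling \eqref{scmu}, and then feed the resulting bracket convergence into the homogeneous convergence machinery of \cite{BF} together with the Laplacian flow compactness theory of \cite{Lty}. Concretely, Theorem \ref{BF-thm} identifies $(G/K,\vp(t_k))$ with $(G_{\mu(t_k)}/K_{\mu(t_k)},\vp)$ equivariantly, carrying $g_{\vp(t_k)}$ to the $G_{\mu(t_k)}$-invariant metric with value $\ip_\vp$ at the origin; composing with \eqref{scmu} and using $\ip_{c^{-3}\vp}=c^{-2}\ip_\vp$, one obtains that $(G/K,c_k^{-3}\vp(t_k))$ is equivariantly equivalent to $(G_{\mu_k}/K_{\mu_k},\vp)$, where $\mu_k:=c_k\cdot\mu(t_k)$, and that $c_k^{-2}g_{\vp(t_k)}$ corresponds to the $G_{\mu_k}$-invariant metric determined by $\ip_\vp$. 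Since the hypothesis is exactly that $\mu_k\to\lambda$ in $\hca_{q,n}$, the corollary reduces to the following statement: if $\mu_k\to\lambda$ in $\hca_{q,n}$ and the Lie injectivity radii of the $(G_{\mu_k}/K_{\mu_k},\ip_\vp)$ are uniformly bounded below, then, after passing to a subsequence, $(G_{\mu_k}/K_{\mu_k},\vp)$ converge to $(G_\lambda/K_\lambda,\vp)$ in the pointed $C^\infty$ sense.

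This last statement is precisely \cite[Corollary 4]{BF}, and I would make explicit the key point that in the bracket picture the object being transported is the \emph{fixed} positive $3$-form $\vp$ on $\pg$, so that the metric, the $G_2$-form, the torsion and all their covariant derivatives at the origin of $G_\mu/K_\mu$ depend polynomially on $\mu$. Hence on any fixed metric ball around the origin the pulled-back structures converge in $C^\infty$ as $\mu_k\to\lambda$, and the uniform lower bound on the Lie injectivity radii is what allows these balls to be arranged so as to exhaust $G_\lambda/K_\lambda$ along a subsequence, ruling out collapsing and identifying the limit with the genuine homogeneous space $(G_\lambda/K_\lambda,\vp)$. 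This proves (i). Here \cite[Theorem 7.1]{Lty}, the Cheeger--Gromov--Hamilton type compactness theorem for Laplacian flow solutions, is the companion input: since each $(G/K,c_k^{-3}\vp(t_k))$ lies on the rescaled Laplacian flow and bracket convergence yields uniform bounds for the torsion and its derivatives of the rescaled solutions near $t_k$, that theorem certifies that the limit $(G_\lambda/K_\lambda,\vp)$ itself lies on a Laplacian flow and that the convergence is smooth, as needed for the applications in Section \ref{homLS}.

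For part (ii), when $K$ is trivial I would invoke the fact, established in \cite{BF}, that the Lie injectivity radius of a left-invariant metric on a simply connected Lie group does not degenerate along a bracket-convergent sequence (the $\mu_k$ being bounded), so the hypothesis of (i) is automatically met and pointed $C^\infty$ convergence holds unconditionally. To upgrade this to honest (non-pointed) $C^\infty$ convergence up to diffeomorphism when $G_\lambda$ is compact or $G$ is completely solvable, the point is that in both cases the limit manifold $G_\lambda$ is diffeomorphic to $G$ in a uniform way: if $G$ is completely solvable then $G$, $G_{\mu_k}$ and $G_\lambda$ are all diffeomorphic to $\RR^7$ via their exponential maps, and in these global charts bracket convergence becomes $C^\infty_{\mathrm{loc}}(\RR^7)$ convergence of the pulled-back $G_2$-forms; if $G_\lambda$ is compact then the $G_{\mu_k}$ are eventually compact and diffeomorphic to $G_\lambda$, so no exhaustion is needed. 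Both refinements are carried out in the proof of \cite[Corollary 4]{BF} and apply verbatim with the fixed form $\vp$ in place of a Riemannian metric.

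The algebraic reduction in the first step is routine given Theorem \ref{BF-thm} and \eqref{scmu}; I expect the substance to lie entirely in the two quoted analytic inputs, namely controlling the Lie injectivity radius to rule out collapsing — which is exactly why hypothesis (i) is imposed and why the Lie-group case is exceptional — and upgrading ``convergence of structures built polynomially from $\mu$'' to genuine pointed $C^\infty$ Cheeger--Gromov convergence compatible with the flow. The one point I would emphasize, since it is what makes the corollary a statement about $G_2$-structures rather than about the underlying metrics alone, is that the object transported in the bracket picture is the fixed $3$-form $\vp$, so $\mu_k\to\lambda$ carries $\vp$ to $\vp$ directly and not merely $g_\vp$ to $g_\vp$.
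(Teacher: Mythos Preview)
Your proposal is correct and follows exactly the route the paper indicates: the paper does not give a proof but simply states that the corollary ``follows from \cite[Corollary 4]{BF} and \cite[Theorem 7.1]{Lty}'', and you have correctly unpacked this by first using Theorem \ref{BF-thm} together with the geometric scaling \eqref{scmu} to identify $(G/K,c_k^{-3}\vp(t_k))$ with $(G_{c_k\cdot\mu(t_k)}/K_{c_k\cdot\mu(t_k)},\vp)$, and then feeding the bracket convergence $\mu_k\to\lambda$ into those two cited results. Your remark that the limit lies on a Laplacian flow goes slightly beyond what the corollary asserts (the paper records the soliton conclusion separately, under the stronger hypothesis $c(t)\cdot\mu(t)\to\lambda$), but this does not affect the argument.
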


We note that the limiting Lie group $G_\lambda$ in the above corollary might be non-isomorphic to $G$, and consequently in part (i), the limiting homogeneous space $G_\lambda/K_\lambda$ might be non-homeomorphic to $G/K$.  If we strength the hypothesis in the above corollary to $c(t)\cdot\mu(t)\to\lambda$, as $t\to T_\pm$, for some smooth function $c(t)\in\RR^*$, then the limit $(G_\lambda/K_\lambda,\vp)$ is in fact a Laplacian soliton (see \cite[Proposition 4.1]{homRS}).

\begin{example}
It follows from Corollary \ref{i-conv}, (ii) that for the bracket flow solution given in Example \ref{n2-3} and $\mu_0:=\mu_{1,0,0,1}$, one obtains that the Laplacian flow solution $(G_{\mu_0},\vp(t))$ smoothly converges up to pull-back by diffeomorphisms to the flat $(\RR^7,\vp)$, as $t\to\infty$.
\end{example}

The bracket flow has been a useful tool in the study of the Ricci flow and some other curvature flows in the homogeneous case, see \cite[Sections 5.3, 5.5]{BF} for a quick overview.

\subsection{Algebraic solitons}\label{sol-sec}
On homogeneous spaces, it is natural to study the existence of the following Laplacian solitons of an `algebraic' nature.

\begin{theorem}\label{rsequiv}\cite[Theorem 6]{BF}
For a homogeneous space $(G/K,\vp)$ endowed with a $G$-invariant $G_2$-structure $\vp$, with $G$ simply connected and $K$ connected, the following conditions are equivalent:
\begin{itemize}
\item[(i)] The bracket flow solution starting at $\lb$ is given by
$$
\mu(t)=c(t)\cdot\lb, \qquad\mbox{for some}\quad c(t)>0, \quad c(0)=1.
$$
\item[(ii)] The operator $Q_{\vp}\in\qg_\vp$ such that $\theta(Q_{\vp})\vp=\Delta_\vp\vp$ satisfies
\begin{equation}\label{as}
Q_{\vp}=cI+D_\pg, \qquad \mbox{for some} \quad c\in\RR, \quad D=\left[\begin{smallmatrix} 0&0\\ 0&D_\pg \end{smallmatrix}\right] \in\Der(\ggo).
\end{equation}
\end{itemize}
In that case, $(G/K,\vp)$ is a Laplacian soliton with
$$
\Delta_\vp\vp=-3c\vp-\lca_{X_D}\vp,
$$
where $X_D$ denotes the vector field on $G/K$ defined by the one-parameter subgroup of $\Aut(G)$ attached to the derivation $D$, the scaling function in (i) is
$$
c(t)=(-2ct+1)^{-1/2},
$$
and the $G$-invariant Laplacian flow solution starting at $\vp$ is given by
\begin{equation}\label{solgama}
\vp(t)=b(t)e^{s(t)D_\pg}\cdot\vp, \qquad b(t):=(-2ct+1)^{3/2}, \qquad s(t):=-\frac{1}{2c}\log(-2ct+1).
\end{equation}
(For $c=0$, set $s(t)=t$).
\end{theorem}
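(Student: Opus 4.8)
The plan is to establish the equivalence of (i) and (ii) by using Theorem \ref{BF-thm}, which links the Laplacian flow solution $\vp(t)$, the bracket flow solution $\mu(t)$, and the gauge curve $h(t)$ via $\vp(t)=h(t)^{-1}\cdot\vp$ and $\mu(t)=\tilde h(t)\cdot\lb$, where $\tilde h(t)=\left[\begin{smallmatrix}I&0\\0&h(t)\end{smallmatrix}\right]$ solves $\ddt h(t)=-Q_{\mu(t)}h(t)$. The crucial first step is to show that condition (ii) propagates along the bracket flow in a controlled way: if $Q_\vp=cI+D_\pg$ with $\tilde D=\left[\begin{smallmatrix}0&0\\0&D_\pg\end{smallmatrix}\right]\in\Der(\ggo)$, then since $\delta_\lb(\tilde D)=0$ (derivations are precisely the kernel of $\delta$), one has $\delta_\lb\left(\left[\begin{smallmatrix}0&0\\0&Q_\vp\end{smallmatrix}\right]\right)=c\,\delta_\lb(I_\pg)=-c\left(\mu_\kg+2\mu_\pg\right)$ in the notation of \eqref{scmu}, which is exactly $\tfrac{d}{dt}\big|_{t=0}(c(t)\cdot\lb)$ for the one-parameter scaling $c(t)=(-2ct+1)^{-1/2}$. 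So the ansatz $\mu(t)=c(t)\cdot\lb$ is at least consistent at $t=0$.

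Next I would verify that $\mu(t)=c(t)\cdot\lb$ is in fact the solution for all $t$, by checking it satisfies the bracket flow ODE \eqref{BF} identically. The key subtlety is the behavior of $Q_{\mu(t)}$ under scaling: because $c\cdot\mu$ is equivariantly equivalent to $(G_\mu/K_\mu, c^{-3}\vp)$ and $\ip_{c^{-3}\vp}=c^{-2}\ip_\vp$, the Hodge Laplacian, the Ricci operator, and the torsion all scale homogeneously, giving (via Proposition \ref{Qphi-form} and the formula $Q_\vp=\Ricci_\vp-\tfrac1{12}\tr(\tau_\vp^2)I+\tfrac12\tau_\vp^2$, all terms being quadratic in the structure constants) that $Q_{c\cdot\lb}=c^2 Q_\lb=c^2(cI+D_\pg)$ — wait, more carefully, the operator $Q$ as an endomorphism of $\pg$ picks up a factor $c^2$, so $Q_{c(t)\cdot\lb}=c(t)^2(cI+D_\pg)$. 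Plugging into the RHS of \eqref{BF}: $\delta_{c(t)\cdot\lb}\left(\left[\begin{smallmatrix}0&0\\0&c(t)^2(cI+D_\pg)\end{smallmatrix}\right]\right)=c(t)^2\,\delta_{c(t)\cdot\lb}(cI_\pg)$ since $\tilde D$ remains a derivation of $(\ggo, c(t)\cdot\lb)$ up to the obvious rescaling, and this should match $\ddt(c(t)\cdot\lb)=c'(t)(\mu\text{-components scaled})$. Checking $c'(t)=-c\,c(t)^3$, which holds for $c(t)=(-2ct+1)^{-1/2}$, closes the loop. For the converse, if $\mu(t)=c(t)\cdot\lb$ then differentiating at $t=0$ forces $\delta_\lb\left(\left[\begin{smallmatrix}0&0\\0&Q_\vp\end{smallmatrix}\right]\right)$ to be a multiple of the infinitesimal scaling direction, i.e.\ $\delta_\lb\left(\left[\begin{smallmatrix}0&0\\0&Q_\vp-cI\end{smallmatrix}\right]\right)=0$ for the appropriate $c$, which by definition of $\delta$ means $\left[\begin{smallmatrix}0&0\\0&Q_\vp-cI\end{smallmatrix}\right]\in\Der(\ggo)$, i.e.\ (ii).

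For the final assertions (the soliton identity, the scaling function, and the explicit form \eqref{solgama}), I would argue as follows. From Theorem \ref{BF-thm}(ii), $h(t)$ solves $\ddt h=-Q_{\mu(t)}h=-c(t)^2(cI+D_\pg)h$; since $cI+D_\pg$ is fixed, this integrates explicitly to $h(t)=\exp\!\big(-(\int_0^t c(s)^2\,ds)(cI+D_\pg)\big)$, and with $c(s)=(-2cs+1)^{-1/2}$ one computes $\int_0^t c(s)^2\,ds=-\tfrac1{2c}\log(-2ct+1)=s(t)$. Hence $h(t)=(-2ct+1)^{c/(2c)}\cdot\ldots$ — cleaner: $h(t)=(-2ct+1)^{1/2}\,e^{-s(t)D_\pg}$ after separating the $cI$ part (note $e^{-s(t)cI}=(-2ct+1)^{1/2}$). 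Then $\vp(t)=h(t)^{-1}\cdot\vp=(-2ct+1)^{-3/2}\,e^{s(t)D_\pg}\cdot\vp$; I need to track the weight $-3/2$ rather than $-1/2$, which comes from $\vp$ being a $3$-form so the scalar $(-2ct+1)^{-1/2}$ acts with its cube under $h\cdot\vp=\vp(h^{-1}\cdot,h^{-1}\cdot,h^{-1}\cdot)$. Wait — $b(t)=(-2ct+1)^{3/2}$ in the statement, so $\vp(t)=b(t)\,e^{s(t)D_\pg}\cdot\vp$ with $b(t)=(-2ct+1)^{3/2}$; reconciling signs: $h(t)^{-1}=(-2ct+1)^{-1/2}e^{s(t)D_\pg}$ acting on a $3$-form contributes the reciprocal cubed of the scalar part, $(-2ct+1)^{3/2}$, giving exactly $b(t)$. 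Finally, since $X_D$ is the vector field generated by $\exp(sD)\in\Aut(G)$, the family $e^{s(t)D_\pg}\cdot\vp=(f_{s(t)})^*\vp$ with $\ddt\big|_0 = \lca_{X_D}\vp$, and differentiating $\vp(t)=b(t)e^{s(t)D_\pg}\cdot\vp$ at $t=0$ yields $\Delta_\vp\vp=\ddt\big|_0\vp(t)=b'(0)\vp+\lca_{s'(0)X_D}\vp=-3c\vp-\lca_{X_D}\vp$ using $b'(0)=-3c$ and $s'(0)=1$. The main obstacle is the bookkeeping in the second paragraph — correctly tracking how $Q_\mu$ transforms under the geometric scaling $\mu\mapsto c\cdot\mu$ of \eqref{scmu} (as opposed to a naive $\Gl(\pg)$-scaling), and making sure the homogeneity weights from Proposition \ref{Qphi-form} are consistent — everything else is a direct consequence of Theorem \ref{BF-thm} and unwinding the definition of $\delta$.
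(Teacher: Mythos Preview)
The paper does not prove this theorem; it is quoted verbatim from \cite[Theorem 6]{BF}, so there is no in-paper argument to compare against. Your approach---verify the scaling ansatz $\mu(t)=c(t)\cdot\lb$ solves the bracket flow ODE for (ii)$\Rightarrow$(i), differentiate at $t=0$ for (i)$\Rightarrow$(ii), then integrate the linear ODE for $h(t)$ from Theorem~\ref{BF-thm}(ii) to get \eqref{solgama}---is the natural one and is essentially what is carried out in \cite{BF}.

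Two small corrections. First, the ODE for the scaling function is $c'(t)=+c\,c(t)^3$, not $-c\,c(t)^3$: with $c(t)=(-2ct+1)^{-1/2}$ one has $c'(t)=c(-2ct+1)^{-3/2}=c\,c(t)^3$, and this is exactly what matching the $\lb_\pg$- and $\lb_\kg$-components of $\tfrac{d}{dt}(c(t)\cdot\lb)=c(t)^2c\,\delta_{c(t)\cdot\lb}(I_\pg)$ requires. Second, your justification of the homogeneity $Q_{c\cdot\mu}=c^2 Q_\mu$ via Proposition~\ref{Qphi-form} is overkill and, strictly, only covers the closed case; the general statement follows directly from $d_{c\cdot\mu}$ being degree-one in $\mu|_{\pg\times\pg}$ (so $\Delta_{c\cdot\mu}\vp=c^2\Delta_\mu\vp$), or equivalently from the equivariant identification of $c\cdot\mu$ with the metric rescaling $c^{-2}\ip_\vp$, under which the Hodge Laplacian on $3$-forms picks up exactly a factor $c^2$. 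You correctly identify this bookkeeping as the only delicate point; once it is in place, everything else is, as you say, a direct consequence of Theorem~\ref{BF-thm} and the definition of $\delta_\mu$.
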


\begin{definition}
A homogeneous space $(G/K,\vp)$ endowed with a $G$-invariant $G_2$-structure and a reductive decomposition $\ggo=\kg\oplus\pg$ is called an {\it algebraic soliton} if condition \eqref{as} holds.
\end{definition}

It follows from the above theorem that any simply connected algebraic soliton is indeed a Laplacian soliton.  The concept of algebraic soliton has been very fruitful in the study of homogeneous Ricci solitons since its introduction in \cite{soliton}, we refer to \cite[Sections 5.2, 5.4]{BF} for a quick overview (see also \cite{SCF,Frn2,SCFmuA,CRF} for other curvature flows).  Nothing changes by allowing a derivation of the form $D=\left[\begin{smallmatrix} \ast&0\\ 0&D_\pg \end{smallmatrix}\right]\in\Der(\ggo)$ in the definition of algebraic soliton since $D\kg=0$ must necessarily holds (see \cite[Remark 7]{BF}).  It is proved in \cite[Section 4]{homRS} (see also \cite{ArrLfn2}) that algebraic solitons are precisely the fixed points, and hence the possible limits of any normalized bracket flow.  Furthermore, given a starting point, one can obtain at most one non-flat algebraic soliton as a limit by running all possible normalized bracket flow solutions (see Corollary \ref{i-conv}).

\begin{example}\label{n2-sol}
According to Theorem \ref{rsequiv}, the closed $G_2$-structures $(G_\mu,\vp)$, $\mu=\mu_{a,b,a,-b}$, studied in Example \ref{n2-2} are all expanding algebraic solitons.  Indeed,
$$
Q_\mu=(a^2+b^2)\Diag(-\tfrac{2}{3},-\tfrac{2}{3},-\tfrac{2}{3},\tfrac{1}{3},\tfrac{1}{3},\tfrac{1}{3},\tfrac{1}{3}) = (a^2+b^2)(-\tfrac{5}{3}I+D),
$$
where $D=\Diag(1,1,1,2,2,2,2)\in\Der(\mu)$.   It is easy to see that they are all pairwise equivalent up to scaling and that $(G_\mu,\ip_\vp)$ is a Ricci soliton for any $a,b$ with Ricci operator
$$
\Ricci_\mu=(a^2+b^2)\left(-2I+\Diag(1,\tfrac{3}{2},\tfrac{3}{2},2,\tfrac{5}{2},\tfrac{5}{2},2)\right)\in\RR I+\Der(\mu).
$$
\end{example}

\section{Homogeneous Laplacian solitons}\label{homLS}

Due to the lack of a general uniqueness result, given a Laplacian soliton $(M,\vp)$ which is homogeneous, one can not assert that the corresponding self-similar Laplacian flow solution starting at $\vp$ (see Section \ref{LS-sec}) coincide with the $\Aut(M,\vp)$-invariant solution $\vp(t)$ given in Proposition \ref{LF-exist}.  In particular, we do not know if $\vp(t)$ is self-similar.

A diffeomorphism $f$ of a homogeneous space $G/K$ is said to be {\it equivariant} if
$$
f(hK)=\tilde{f}(h)K, \qquad\forall h\in G, \qquad\mbox{for some} \quad \tilde{f}\in\Aut(G), \quad \tilde{f}(K)=K.
$$
Let $\Aut(G/K)$ denote the subgroup of $\Diff(G/K)$ of all equivariant diffeomorphisms of $G/K$.

\subsection{Semi-algebraic solitons}
Given a simply connected algebraic soliton $(G/K,\vp)$, so a Laplacian soliton, it follows from \eqref{solgama} that the diffeomorphisms $f(t)$ defining the corresponding self-similar solution starting at $\vp$ can be taken as the equivariant diffeomorphisms defined by the automorphisms of $G$ with derivatives $e^{-s(t)D}\in\Aut(\ggo)$.  This motivates the following more general way to consider a Laplacian soliton to be `algebraic' (see \cite[Section 2]{Jbl} and \cite[Section 3]{homRS} for the Ricci flow case).

\begin{definition}\label{sas-def}
A homogeneous space endowed with a $G$-invariant $G_2$-structure $(G/K,\vp)$ is called a {\it semi-algebraic soliton} if there exists a one-parameter family $f(t)\in\Aut(G/K)$ such that the $G$-invariant Laplacian flow solution starting at $\vp$ is given by $\vp(t)=c(t)f(t)^*\vp$, for some $c(t)\in\RR^*$.
\end{definition}

In other words, a semi-algebraic soliton $(G/K,\vp)$ is a Laplacian soliton for which the solution $\vp(t)$ stays equivariantly equivalent to $\vp$ for all $t$ rather that only equivalent.  The following result was proved in \cite[Theorem 3.1]{Jbl} for Ricci solitons; we essentially follow the lines of that proof.

\begin{proposition}\label{sas-hom}
Let $(M,\vp)$ be a homogeneous Laplacian soliton and consider
$$
G=\Aut(M,\vp).
$$
If the $G$-invariant Laplacian flow solution $\vp(t)$ on $M=G/K$ starting at $\vp$ is self-similar, then $(G/K,\vp)$ is a semi-algebraic soliton.  Moreover, there exist $f(t)\in\Aut(G/K)$, $f(0)=id$, such that,
\begin{itemize}
\item[(i)] $\vp(t)=c(t)f(t)^*\vp$, for some $c(t)\in\RR^*$.
\item[(ii)] $f(t)|_K=id$ for all $t$.
\end{itemize}
\end{proposition}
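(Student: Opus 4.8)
The plan is to take the diffeomorphisms that self-similarity provides for free and to promote them to \emph{equivariant} ones; the engine that makes this possible is the rigidity of the automorphism group along the flow recorded in Proposition~\ref{LF-exist}(ii). By the characterization of self-similar solutions recalled in Section~\ref{LS-sec}, the hypothesis yields a smooth family $F(t)\in\Diff(M)$ with $F(0)=\mathrm{id}$ (the flow of the complete vector field $X$ occurring in the soliton equation, suitably reparametrized) and the positive scalar $c(t)=(\tfrac23 ct+1)^{3/2}$ such that $\vp(t)=c(t)\,F(t)^*\vp$ on $(T_-,T_+)$.

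The crucial step is to show that each $F(t)$ normalizes $G=\Aut(M,\vp)$ inside $\Diff(M)$. Since the stabilizer of a positive $3$-form in $\Diff(M)$ is unchanged under a positive rescaling, and since $\Phi^*(F^*\vp)=F^*\vp$ is equivalent to $(F\Phi F^{-1})^*\vp=\vp$, one gets
$$
\Aut(M,\vp(t))=\Aut(M,F(t)^*\vp)=F(t)^{-1}\,\Aut(M,\vp)\,F(t).
$$
On the other hand, Proposition~\ref{LF-exist}(ii) says $\Aut(M,\vp(t))=\Aut(M,\vp)=G$. Hence $F(t)^{-1}GF(t)=G$, so conjugation by $F(t)$ defines an automorphism $\tilde F(t)\in\Aut(G)$.

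Next I would correct $F(t)$ so that it fixes the base point $o$ (whose $G$-isotropy is $K$): choose a smooth path $g(t)\in G$ with $g(0)=e$ and $g(t)\cdot o=F(t)\cdot o$ (possible since $G\to G/K$ is a fibration over the interval $(T_-,T_+)$), and set $f_0(t):=g(t)^{-1}\circ F(t)$, where $g(t)^{-1}$ denotes the action of $g(t)^{-1}\in G$ on $M$. Then $f_0(t)(o)=o$, $f_0(t)$ still normalizes $G$, and $f_0(t)^*\vp=F(t)^*\vp=c(t)^{-1}\vp(t)$ because $g(t)^{-1}\in G$ preserves $\vp$. A diffeomorphism normalizing the transitive group $G$ and fixing $o$ is automatically equivariant: the associated automorphism $\tilde f_0(t)$ preserves $K$ (as $f_0(t)(o)=o$), and $f_0(t)$ agrees with the equivariant diffeomorphism attached to $\tilde f_0(t)$ since $f_0(t)(h\cdot o)=\tilde f_0(t)(h)\cdot o$ for every $h\in G$. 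Thus $f_0(t)\in\Aut(G/K)$ with $\vp(t)=c(t)f_0(t)^*\vp$, which already proves that $(G/K,\vp)$ is a semi-algebraic soliton and gives part (i) with $f=f_0$. For part (ii) I would post-compose with a path in $K$: the action of $k\in K$ on $G/K$ coincides with the equivariant diffeomorphism attached to conjugation by $k$ (here one uses $k^{-1}K=K$) and preserves $\vp$ by $G$-invariance, so replacing $f_0(t)$ by $f_0(t)\circ k(t)$ keeps (i) intact while modifying $\tilde f_0(t)|_K$ only by an inner automorphism of $K$; since $K$ is compact (being the $o$-isotropy of the closed subgroup $G\subset\Iso(M,g_\vp)$) the path $t\mapsto\tilde f_0(t)|_K$ in $\Aut(K)$, which starts at the identity, is conjugation by a smooth path $k_0(t)\in K$ with $k_0(0)=e$, and then $f(t):=f_0(t)\circ k_0(t)^{-1}$ satisfies $\tilde f(t)|_K=\mathrm{id}$, $f(0)=\mathrm{id}$ and $\vp(t)=c(t)f(t)^*\vp$.

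The only genuinely non-formal ingredients I expect to need are Proposition~\ref{LF-exist}(ii) --- precisely what upgrades the a~priori statement ``$\Aut(M,\vp(t))$ is conjugate to $\Aut(M,\vp)$'' to an equality, without which the whole argument collapses --- and, for the refinement (ii) only, the standard fact that near the identity every automorphism of a compact Lie group $K$ is inner; the equivariance bookkeeping and the path-lifting are routine.
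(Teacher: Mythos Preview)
Your proof is correct and follows essentially the same route as the paper's: both arguments hinge on Proposition~\ref{LF-exist}(ii) to conclude that the self-similarity diffeomorphisms normalize $G$, then correct by an element of $G$ to fix the base point (you do these two steps in the reverse order, which is immaterial), identify the resulting base-point-fixing normalizer with an equivariant diffeomorphism, and finally use compactness of $K$ to kill the restriction to $K$ via an inner automorphism. The only cosmetic difference is that you compose with the $K$-correction on the right while the paper composes on the left, but both yield $\tilde f(t)|_K=\mathrm{id}$.
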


\begin{proof}
Consider the presentation $M=G/K$, where $K$ is the isotropy subgroup of $G$ at some $p\in M$.  If $\vp(t)=c(t)g(t)^*\vp$, for $c(t)\in\RR^*$, $g(t)\in\Diff(M)$, then we can assume that $g(0)=id$, and also that $g(t)(p)=p$ for all $t$ by composing with $h(t)\in G$ such that $h(t)^{-1}(p)=g(t)(p)$.  Since $G=\Aut(M,\vp(t))=g(t)^{-1}Gg(t)$ for all $t$ (see Proposition \ref{LF-exist}, (ii)), we can define an isomorphism $\tilde{f}(t):G\longrightarrow G$ by $\tilde{f}(t)(h):=g(t)hg(t)^{-1}$ for all $h\in G$, which in turns determines $f(t)\in\Aut(G/K)$ as $\tilde{f}(t)(K)\subset K$.  But $f(t)=g(t)$ on $G/K$ for all $t$:
$$
f(t)(hK) = g(t)hg(t)^{-1}K = g(t)hg(t)^{-1}(p) = g(t)(h(p)) = g(t)(hK), \qquad\forall h\in G,
$$
and so part (i) follows.

To prove that part (ii) can also be assumed to hold, we first note that $K$ is compact and so the identity component $\Aut(K)_0$ of $\Aut(K)$ consists of inner automorphisms.  Since $\tilde{f}(t)|_K$ is continuous and $\tilde{f}(0)=id$, we have that $\tilde{f}(t)|_K\in\Aut(K)_0$ for all $t$, so there exist $k(t)\in K$, $k(0)=e$, such that $\tilde{f}(t)|_K=I_{k(t)}$, where $I_k$ denotes conjugation by $k$.  Now the equivariant diffeomorphism $\tau(k(t))$ of $G/K$ determined by $I_{k(t)}\in\Aut(G)$ belongs to $\Aut(M,\vp)$ (recall that $dI_{k}|_o=Ad(k)$ for any $k\in K$), so parts (i) and (ii) both hold for the equivariant diffeomorphisms $\tau(k(t))^{-1}f(t)$, concluding the proof of the proposition.
\end{proof}

\begin{remark}\label{fmore}
The following properties can be deduced from the proof of the above proposition:
\begin{itemize}
\item[(i)] If $\vp(t)=c(t)f(t)^*\vp$, for some $c(t)\in\RR^*$ and $f(t)\in\Diff(M)$ such that $f(0)=id$, $f(t)(o)=o$, then $f(t)\in\Aut(G/K)$ for all $t$ for $G=\Aut(M,\vp)$.
\item[(ii)] The fact that $f(t)|_K=id$ for all $t$ can be assumed for any semi-algebraic soliton $(G/K,\vp)$ with $K$ compact.
\end{itemize}
\end{remark}

\begin{corollary}
Assume that for each homogeneous $G_2$-structure $(M,\vp)$, there exists a unique Laplacian flow solution $(M,\vp(t))$ starting at $\vp$ such that $(M,\vp(t))$ is homogeneous for all $t$.  Then any homogeneous Laplacian soliton $(M,\vp)$ is a semi-algebraic soliton when presented as a homogeneous space $(G/K,\vp)$ with $G=\Aut(M,\vp)$.
\end{corollary}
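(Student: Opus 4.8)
The plan is to use the uniqueness hypothesis to identify the self-similar solution provided by the soliton condition with the canonical $\Aut(M,\vp)$-invariant solution of Proposition \ref{LF-exist}, and then to quote Proposition \ref{sas-hom}.

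First, since $(M,\vp)$ is a Laplacian soliton, the equivalence recalled in Section \ref{LS-sec} produces a genuine self-similar Laplacian flow solution $\tilde{\vp}(t)=c(t)f(t)^*\vp$ starting at $\vp$, with $c(t)=\left(\tfrac{2}{3}ct+1\right)^{3/2}$ and $f(t)\in\Diff(M)$ the flow of a complete vector field; it is defined on the maximal interval $(T_-,T_+)$. I would then observe that $\tilde{\vp}(t)$ is homogeneous for every $t$: the positive $3$-form $f(t)^*\vp$ is equivalent to $\vp$ via $f(t)$, so $\Aut(M,f(t)^*\vp)=f(t)^{-1}\Aut(M,\vp)f(t)$ still acts transitively on $M$; and multiplying a positive $3$-form by a nonzero scalar leaves its automorphism group unchanged, so $\Aut(M,\tilde{\vp}(t))=\Aut(M,f(t)^*\vp)$ is transitive as well.

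Next comes the only real use of the hypothesis. By Proposition \ref{LF-exist}(ii), the $\Aut(M,\vp)$-invariant solution $\vp(t)$ satisfies $\Aut(M,\vp(t))=\Aut(M,\vp)$ for all $t$, hence $\vp(t)$ is homogeneous for all $t$ too. Thus $\vp(t)$ and $\tilde{\vp}(t)$ are both Laplacian flow solutions starting at $\vp$ that stay homogeneous for all $t$, so by the assumed uniqueness they coincide: $\vp(t)=\tilde{\vp}(t)=c(t)f(t)^*\vp$. In particular, the canonical solution $\vp(t)$ is self-similar.

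Finally, writing $M=G/K$ with $G=\Aut(M,\vp)$, the $G$-invariant Laplacian flow solution starting at $\vp$ is exactly this $\vp(t)$ (Proposition \ref{LF-exist}(iii)), and it is self-similar, so Proposition \ref{sas-hom} applies directly and yields that $(G/K,\vp)$ is a semi-algebraic soliton. The genuinely delicate work — producing the equivariant diffeomorphisms $f(t)$ and controlling their restriction to the (compact) isotropy $K$ — was already done in Proposition \ref{sas-hom}; the anticipated obstacle here is purely the point in the third paragraph, namely that without a uniqueness theorem in the noncompact case one cannot a priori identify the soliton's self-similar solution with the homogeneous solution $\vp(t)$, which is precisely why the statement is only conditional.
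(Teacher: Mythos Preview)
Your proof is correct and follows exactly the route the paper intends: use the uniqueness hypothesis to identify the soliton's self-similar solution with the $\Aut(M,\vp)$-invariant solution from Proposition \ref{LF-exist} (after checking both stay homogeneous), and then invoke Proposition \ref{sas-hom}. The paper leaves this corollary unproved precisely because it is immediate from that proposition once the self-similarity of $\vp(t)$ is in hand.
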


We now give a characterization of semi-algebraic solitons based on a formula for the operator $Q_\vp$ in terms of derivations, as we have for algebraic solitons (see \eqref{as}), which provides a useful tool to study the existence, uniqueness and structure of homogeneous Laplacian solitons.

\begin{proposition}\label{sas-der}
If $(G/K,\vp)$ is a semi-algebraic soliton, then for any reductive decomposition $\ggo=\kg\oplus\pg$ for $G/K$,
\begin{equation}\label{alg2}
Q_{\vp}=cI+\proy_{\vp}(D_{\pg}), \qquad\mbox{for some} \quad c\in\RR, \quad
D=\left[\begin{smallmatrix} \ast&\ast\\ 0&D_\pg \end{smallmatrix}\right]\in\Der(\ggo),
\end{equation}
where $\proy_{\vp}:\glg(\pg)=\ggo_2(\vp)\oplus\qg(\vp)\longrightarrow\qg(\vp)$ is the usual linear projection.  The converse holds for $G$ simply connected and $K$ connected, and in that case, $\Delta_\vp\vp=-3c\vp - \lca_{X_D}\vp$ and the Laplacian flow solution starting at $\vp$ is given by $\vp(t)=b(t)e^{s(t)D_\pg}\cdot\vp$ as in \eqref{solgama}.
\end{proposition}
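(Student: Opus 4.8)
The plan is to prove the two implications by differentiating the defining soliton equation at $t=0$ (forward direction) and by explicitly exhibiting the solution and verifying it solves the flow (converse), using throughout the linear algebra of the representation $\theta$ set up in \eqref{nondeg}--\eqref{nondeg2}: namely that $\theta(\cdot)\vp$ is injective on $\qg(\vp)$ with kernel $\ggo_2(\vp)$, that $I\in\qg(\vp)$ with $\proy_\vp(I)=I$, and that $\theta(I)\vp=-3\vp$.

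For the forward direction, assume $(G/K,\vp)$ is a semi-algebraic soliton, so $\vp(t)=c(t)f(t)^*\vp$ with $f(t)\in\Aut(G/K)$; normalize (using Remark \ref{fmore}) so that $f(0)=\operatorname{id}$, $f(t)(o)=o$ and $c(0)=1$. Each $f(t)$ is induced by some $\tilde f(t)\in\Aut(G)$ with $\tilde f(t)(K)=K$, and $\varphi(t):=d\tilde f(t)|_e\in\Aut(\ggo)$ preserves $\kg$, hence is block upper-triangular for $\ggo=\kg\oplus\pg$ (zero lower-left block); its induced map on $\ggo/\kg\equiv\pg$ is exactly $h(t):=df(t)|_o$, which is the $(\pg,\pg)$-block of $\varphi(t)$, with $h(0)=I$. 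Since $\vp(t)$ is $G$-invariant it is determined by $\vp(t)|_o=c(t)\,h(t)^*\vp=c(t)\,h(t)^{-1}\cdot\vp$. Differentiating at $t=0$, using \eqref{LF2}, \eqref{defQphi}, the definition of $\theta$ as the derivative of the $\Gl(\pg)$-action, and $\theta(I)\vp=-3\vp$, one gets
\[
\theta(Q_\vp)\vp \;=\; c'(0)\vp-\theta\big(h'(0)\big)\vp \;=\; \theta\big(-\tfrac{c'(0)}{3}I-h'(0)\big)\vp .
\]
Injectivity of $\theta(\cdot)\vp$ on $\qg(\vp)$ modulo $\ggo_2(\vp)$ then forces $Q_\vp=-\tfrac{c'(0)}{3}I-\proy_\vp(h'(0))$. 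Writing $\varphi'(0)=\varphi(0)N$ with $N\in\Der(\ggo)$ preserving $\kg$ (so $N$ is block upper-triangular; since the $(\pg,\pg)$-block of $\varphi(0)$ is $I$, the $(\pg,\pg)$-block of $N$ is $h'(0)$), the derivation $D:=-N$ has the required form $\left[\begin{smallmatrix}\ast&\ast\\0&D_\pg\end{smallmatrix}\right]$ with $D_\pg=-h'(0)$, and $c:=-c'(0)/3$ yields \eqref{alg2}.

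For the converse, assume $G$ simply connected, $K$ connected, and \eqref{alg2}. Since the lower-left block of $D$ is zero, $D(\kg)\subset\kg$, so $e^{sD}$ integrates to a one-parameter group $\tilde f_s\in\Aut(G)$ with $\tilde f_s(K)=K$ ($K$ connected), giving $f_s\in\Aut(G/K)$ with $df_s|_o=e^{sD_\pg}$, hence $f_s^*\vp|_o=e^{-sD_\pg}\cdot\vp$; let $X_D$ be its generating vector field, so $\theta(D_\pg)\vp=-\lca_{X_D}\vp$ at $o$ (as in Theorem \ref{rsequiv}). Put $\vp(t):=b(t)\,e^{s(t)D_\pg}\cdot\vp=b(t)\,f_{-s(t)}^*\vp$ with $b(t)=(-2ct+1)^{3/2}$ and $s(t)=-\tfrac1{2c}\log(-2ct+1)$ (and $b\equiv1$, $s(t)=t$ when $c=0$). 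Using that pull-back by a diffeomorphism commutes with the Hodge Laplacian and that $\Delta_{\lambda\psi}(\lambda\psi)=\lambda^{1/3}\Delta_\psi\psi$ (from $g_{\lambda\psi}=\lambda^{2/3}g_\psi$ and the scaling of $\Delta$ on $3$-forms in dimension $7$), one computes $\Delta_{\vp(t)}\vp(t)=b(t)^{1/3}f_{-s(t)}^*(\Delta_\vp\vp)$. Since $\theta$ annihilates $\ggo_2(\vp)$ and $D_\pg-\proy_\vp(D_\pg)\in\ggo_2(\vp)$, \eqref{alg2} gives $\Delta_\vp\vp=\theta(Q_\vp)\vp=-3c\vp+\theta(D_\pg)\vp$. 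Differentiating $\vp(t)=b(t)e^{s(t)D_\pg}\cdot\vp$ directly (using that $\theta(D_\pg)$ commutes with $e^{s(t)D_\pg}\cdot$) and comparing with the previous expression, the two sides coincide precisely because $b'=-3cb^{1/3}$ and $s'=b^{-2/3}$, which the explicit $b,s$ satisfy. Hence $\vp(t)$ is the unique (Proposition \ref{LF-exist}) $G$-invariant Laplacian flow solution, so $(G/K,\vp)$ is a semi-algebraic soliton, $\Delta_\vp\vp=-3c\vp-\lca_{X_D}\vp$, and $\vp(t)=b(t)e^{s(t)D_\pg}\cdot\vp$ as in \eqref{solgama}.

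The main obstacle is a technical point in the forward direction: making sense of $h'(0)$ and $\varphi'(0)$ requires that the family $f(t)$ — or at least $h(t)=df(t)|_o$ — can be chosen to depend differentiably on $t$. This follows as in the Ricci flow case: $\Aut(G/K)$ is a Lie group, $\vp(t)$ is smooth in $t$, and one lifts the smooth curve $t\mapsto c(t)^{-1}\vp(t)$ through the orbit map $\Gl(\pg)\to\Gl(\pg)\cdot\vp$ to a smooth family of automorphisms fixing the origin (cf.\ \cite[Section 2]{Jbl} and \cite[Section 3]{homRS}). Everything else is routine bookkeeping with $\theta$, the scaling behaviour of $\Delta$, and the block structure of automorphisms preserving $\kg$.
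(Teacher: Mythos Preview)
Your proof is correct and follows essentially the same approach as the paper's. The forward direction is virtually identical (differentiate the self-similar family at $t=0$, identify the derivative of the $\Aut(\ggo)$-curve as a derivation with the required block form, then project to $\qg(\vp)$). For the converse, the paper first establishes the pointwise soliton identity $\Delta_\vp\vp=-3c\vp-\lca_{X_D}\vp$ and then asserts the self-similar solution form, whereas you directly verify that the candidate $\vp(t)=b(t)e^{s(t)D_\pg}\cdot\vp$ solves the flow via scaling and diffeomorphism invariance of $\Delta$; this simply unpacks what the paper calls ``easy to see,'' and your added care about smoothness of $t\mapsto f(t)$ is a point the paper leaves implicit.
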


\begin{remark}\label{sas-closed}
In the case when $\vp$ is closed, one has that $Q_\vp\in\sym(\pg)=\qg_1(\vp)\oplus\qg_{27}(\vp)$ (see Lemma \ref{Qphi-form}) and so condition \eqref{alg2} becomes
$$
Q_\vp=cI+\unm\left(D_\pg+D_\pg^t\right).
$$
\end{remark}

\begin{proof}
If $(G/K,\vp)$ is a semi-algebraic soliton, then $\vp(t)=a(t)f(t)^*\vp$, for some $a(t)\in\RR^*$, $a(0)=1$, $f(t)\in\Aut(G/K)$, $f(0)=id$.  By taking the derivative at $t=0$, one obtains that $\Delta_\vp\vp=a\vp+\lca_X\vp$ for $a=a'(0)$ and $X$ the vector field on $G/K$ defined by $X_p:=\ddt|_0f(t)(p)$.  Now consider $$
D=\left[\begin{smallmatrix} \ast&\ast\\ 0&D_\pg \end{smallmatrix}\right]=-\ddt\Big|_0d\tilde{f}(t)|_e\in\Der(\ggo),
$$
where for each $t$, $\tilde{f}(t)\in\Aut(G)$ is the automorphism defining $f(t)$.  It follows that
\begin{equation}\label{derlie}
\lca_X\vp(o) = \ddt\Big|_0 (f(t)^*\vp)(o) = \ddt\Big|_0 df(t)|_o^*\vp = -\theta\left(\ddt\Big|_0 df(t)|_o\right)\vp = \theta(D_\pg)\vp.
\end{equation}
The formula $\Delta_\vp\vp=a\vp+\lca_X\vp$ therefore becomes $\theta(Q_\vp)\vp=\theta\left(-\frac{1}{3}aI+D_\pg\right)\vp$ when evaluated at the origin $o$, from which condition \eqref{alg2} follows for $c=-\frac{1}{3}a$.

Conversely, assume that condition (\ref{alg2}) holds for some reductive decomposition $\ggo=\kg\oplus\pg$ for $G/K$.  Since $D\in\Der(\ggo)$, $e^{tD}\in\Aut(\ggo)$ for all $t\in\RR$ and since $G$ is simply connected, there exists $\tilde{f}(t)\in\Aut(G)$ such that $d\tilde{f}(t)|_e=e^{tD}$.  By using that $K$ is connected and $D\kg=\kg$, we obtain for each $t$ that $\tilde{f}(t)(K)=K$, so $\tilde{f}(t)$ defines $f(t)\in\Aut(G/K)$, for which $df(t)|_{o}=e^{tD_{\pg}}$.  Let  $X_D$ denote the vector field on $G/K$ defined by $X_D(p)=\ddt|_0 f(t)(p)$.  It follows as in \eqref{derlie} that $\lca_{X_D}\vp(o) = -\theta(D_\pg)\vp$, so from \eqref{alg2} we obtain that
$$
\Delta_\vp\vp = \theta(Q_\vp)\vp = c\theta(I)\vp + \theta(D_\pg)\vp = -3c\vp - \lca_{X_D}\vp,
$$
by using that every tensor in this formula is $G$-invariant (recall that the flow of $X_D$ is given by automorphisms of $G$).  It is easy to see that this implies that the Laplacian flow solution is given by $\vp(t)=b(t)f(-s(t))^*\vp$ for $b(t)$ and $s(t)$ as in \eqref{solgama}, concluding the proof of the proposition.
\end{proof}

\begin{remark}\label{Dmore}
One can obtain a derivation $D$ with a simpler structure in the above proposition as follows:
\begin{itemize}
\item[(i)] If the reductive decomposition $\ggo=\kg\oplus\pg$ for $G/K$ with $B(\kg,\pg)=0$ is considered, where $B$ is the Killing form of $\ggo$, then $D\pg\subset\pg$ (see \cite[Lemma 3.10]{homRS}).

\item[(ii)] It follows from Remark \ref{fmore}, (ii) that if $K$ is compact, then $D\kg=0$.
\end{itemize}
\end{remark}

\subsection{Bracket flow evolution of semi-algebraic solitons}
In Theorem \ref{rsequiv}, algebraic solitons have been characterized as the $G_2$-structures that evolves as simply as possible along the bracket flow.  It is then natural to ask about the bracket flow evolution of semi-algebraic solitons.  Let $(G/K,\vp)$ be a closed semi-algebraic soliton with $K$ compact and consider the reductive decomposition $\ggo=\kg\oplus\pg$ with $B(\kg,\pg)=0$.  From the above proposition and Remarks \ref{sas-closed} and \ref{Dmore}, we obtain that
\begin{equation}\label{sas-D}
Q_{\vp}=cI+\unm\left(D_{\pg}+D_\pg^t\right), \qquad\mbox{for some} \quad c\in\RR, \quad
D=\left[\begin{smallmatrix} 0&0\\ 0&D_\pg \end{smallmatrix}\right]\in\Der(\ggo).
\end{equation}
In that case, the formula for the bracket flow solution starting at the Lie bracket $\lb$ of $\ggo$ has been computed in \cite[Proposition 4.2 and Remark 4.3]{homRS} and is given by
\begin{equation}\label{saBF2}
\mu(t) = (-2ct+1)^{-1/2} \cdot \left(\left[\begin{smallmatrix} I&0\\ 0& e^{s(t)A} \end{smallmatrix}\right] \cdot \lb\right),\qquad  A := \unm(D_\pg - D_\pg^t),
\end{equation}
where $s(t)$ is as in \eqref{solgama}.  Note that if in addition, $D^t\in\Der(\ggo)$, then we recover the formula for $\mu(t)$ in the case of an algebraic soliton given in Theorem \ref{rsequiv}, (i).

We note that for expanding solitons, i.e.\ $c<0$, the function $s$ is defined on $(T_-,T_+)=(\frac{1}{2c},\infty)$, $s(0)=0$ and it is strictly increasing.  On the other hand, for shrinking solitons, i.e.\ $c>0$, $s$ is defined on $(T_-,T_+)=(-\infty,\frac{1}{2c})$, $s(0)=0$ and it is strictly decreasing.  Since $A$ is skew-symmetric, its eigenvalues are either purely imaginary numbers or zero, say $\pm\im a_1,\dots,\pm\im a_m,0,\dots,0$ ($a_j>0$).  If the set $\{ a_1,\dots,a_m\}$ is linearly dependent over $\QQ$, then there exists a sequence $t_k$, with $t_k \rightarrow \pm\infty$ (depending on the sign of $c$), such that $e^{s(t_k)A}=I$ for all $k$, and thus the bracket flow solution projected on the sphere,
$$
\frac{\mu(t)}{|\mu(t)|} =  \left[\begin{smallmatrix} I&0\\ 0& e^{s(t)A} \end{smallmatrix}\right] \cdot \lb,
$$
is periodic (see Example \ref{semi-ex} below).  Thus for any expanding (shrinking) semi-algebraic soliton, $\mu(t)$ converges to zero (to infinity) by rounding in a cone as $t\to\infty$ ($t\to\tfrac{1}{2c}$).  If on the contrary, the set $\{a_1,\dots,a_m\}$ is linearly independent over $\QQ$, then $\frac{\mu(t)}{|\mu(t)|}$ is not periodic and by Kronecker's theorem, for each $t_0\in(T_-,T_+)$, there exists a sequence $t_k$, with $t_k \rightarrow \pm\infty$ (depending on the sign of $c$), such that $e^{s(t_k)A} \rightarrow e^{s(t_0)A}$.  This implies that
$$
\frac{\mu(t_k)}{|\mu(t_k)|} \underset{k \rightarrow \infty}\longrightarrow\frac{\mu(t_0)}{|\mu(t_0)|},
$$
which reveals the following chaotic behavior: the solution projected on the sphere is not periodic, but nevertheless each point of the solution is contained in the $\omega$-limit.  The existence of a semi-algebraic soliton of this kind is an open problem.

\subsection{Laplacian flow diagonal property}
In this section, we aim to characterize algebraic solitons among homogeneous Laplacian solitons in a geometric way.  The concept of Laplacian soliton is a geometric invariant, that is, invariant under equivalence of $G_2$-structures (or pull-back by diffeomorphisms).  However, the concept of semi-algebraic soliton is not, as it may depend on the presentation of the homogeneous $G_2$-structure $(M,\vp)$ as a homogeneous space $(G/K,\vp)$.  Moreover, being an algebraic soliton may a priori not only depend on such presentation, but also on the reductive decomposition $\ggo = \kg \oplus \pg$ one is choosing for the homogeneous space.

\begin{definition}\label{LFdiag}
A homogeneous $G_2$-structure $(M,\vp)$ is said to be {\it Laplacian flow diagonal} if the $\Aut(M,\vp)$-invariant Laplacian flow solution $\vp(t)$ starting at $\vp$ satisfies the following property: at some point $p\in M$, there exists a basis $\beta$ of $T_pM$, orthonormal with respect to $\ip_\vp$, such that the matrix $[Q_{\vp(t)}(p)]_\beta$ is diagonal for all $t$.
\end{definition}

Recall from \eqref{defQphi} that $Q_{\vp(t)}\in\qg_{\vp(t)}\subset\End(TM)$ is the operator satisfying
$\theta(Q_{\vp(t)})\vp(t)=\Delta_{\vp(t)}\vp(t)$.  Therefore, if $(M,\vp)$ is Laplacian flow diagonal, then the primitive $3$-forms $e^i\wedge e^j\wedge e^k$ that appear in the formulas for the $3$-forms $\vp(t)$ and $\Delta_{\vp(t)}\vp(t)$ in terms of the basis $\beta=\{ e_1,\dots,e_7\}$ are the same for all time $t$.  Such a property is very convenient in the study of any aspect of the Laplacian flow ODE \eqref{LF2}, including its qualitative behavior and the search for exact solutions.  This dichotomy neatly arose in the study of Laplacian flow solutions on nilpotent Lie groups worked out in \cite[Section 4]{FrnFinMnr}: cases $N_2$ and $N_{12}$ are Laplacian flow diagonal, whereas $N_4$ and $N_6$ are not.

The following observations on the above definition are also in order:

\begin{itemize}
\item The point $p$ can be replaced by any other point by homogeneity.

\item The property of being Laplacian flow diagonal is invariant under equivalence since given $f\in\Diff(M)$, the operators corresponding to $(M,f^*\vp)$ are simultaneously conjugate via $df|_{f^{-1}(p)}$ to those of $(M,\vp)$.

\item For $\vp$ closed, the simultaneous diagonalization in the above definition is equivalent to the family of operators $\{ Q_{\vp(t)}:t\in (T_-,T_+)\}$ being commutative, as they are all symmetric by Proposition \ref{Qphi-form}.
\end{itemize}

Once we consider a presentation $(M,\vp)=(G/K,\vp)$ and a reductive decomposition $\ggo=\kg\oplus\pg$, Theorem \ref{BF-thm} tells us that $\vp(t)=h(t)^*\vp$ for the solution $h(t)\in\Gl(\pg)$ to the ODE $\ddt h(t)=-h(t)Q_{\vp(t)}$, $h(0)=I$.  It follows that the following conditions are equivalent:
\begin{itemize}
\item $(G/K,\vp)$ is Laplacian flow diagonal.

\item The family of operators $\{ h(t):t\in (T_-,T_+)\}$ is simultaneously diagonalizable with respect to an orthonormal basis of $\pg$.
\end{itemize}

\begin{example}
It follows from Theorem \ref{rsequiv} that if $(G/K,\vp)$ is a simply connected algebraic soliton, then $[Q_\vp,D_\pg]=0$ and so by \eqref{solgama},
$$
Q_{\vp(t)} = (-2ct+1)^{-1}e^{s(t)D_\pg} Q_{\vp} e^{-s(t)D_\pg}=(-2ct+1)^{-1}Q_\vp, \qquad\forall t.
$$
This shows that $\vp$ is Laplacian flow diagonal as soon as $Q_\vp$ is symmetric (or equivalently, diagonalizable with respect to an orthonormal basis), e.g.\ if $\vp$ is closed.
\end{example}

We now prove that the Laplacian flow diagonal condition actually characterizes algebraic solitons among homogeneous Laplacian solitons.

\begin{theorem}\label{diagalg}
A closed semi-algebraic soliton $(G/K,\vp)$ with $K$ compact is Laplacian flow diagonal if and only if it is an algebraic soliton.
\end{theorem}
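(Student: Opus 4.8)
The plan is to prove both implications from the explicit description of the Laplacian flow for a closed semi-algebraic soliton, combined with a linear-algebra analysis of the operators $Q_{\vp(t)}$. The implication ``algebraic $\Rightarrow$ Laplacian flow diagonal'' will be immediate from the Example preceding this statement: after taking $G$ simply connected and $K$ connected (the notion of algebraic soliton depends only on $\ggo,\kg,\pg,\vp$ and Laplacian flow diagonality is an equivalence invariant, so this is no loss), condition \eqref{as} gives $[Q_\vp,D_\pg]=0$, hence $Q_{\vp(t)}=(-2ct+1)^{-1}Q_\vp$ for all $t$; since $Q_\vp$ is symmetric ($\vp$ closed, Proposition \ref{Qphi-form}), all $Q_{\vp(t)}$ are diagonal in a single $\ip_\vp$-orthonormal eigenbasis of $Q_\vp$.

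For the converse, fix the reductive decomposition with $B(\kg,\pg)=0$, so that by \eqref{sas-D} one has $Q_\vp=cI+S$ with $S:=\unm(D_\pg+D_\pg^t)$ symmetric, $A:=\unm(D_\pg-D_\pg^t)$ skew-symmetric, and $D=\left[\begin{smallmatrix}0&0\\0&D_\pg\end{smallmatrix}\right]\in\Der(\ggo)$. By Proposition \ref{sas-der} the solution is $\vp(t)=b(t)e^{s(t)D_\pg}\cdot\vp$, and the same computation as in the Example preceding this statement (which used only this form of $\vp(t)$, not the simplification $e^{sD_\pg}Q_\vp e^{-sD_\pg}=Q_\vp$) gives
$$
Q_{\vp(t)}=(-2ct+1)^{-1}\,e^{s(t)D_\pg}\,Q_\vp\,e^{-s(t)D_\pg},\qquad t\in(T_-,T_+).
$$
Assume now $(G/K,\vp)$ is Laplacian flow diagonal. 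Then each $Q_{\vp(t)}$ is symmetric with respect to the \emph{fixed} inner product $\ip_\vp$ (a matrix diagonal in an $\ip_\vp$-orthonormal basis is $\ip_\vp$-self-adjoint), hence so is $\Phi(s):=e^{sD_\pg}Se^{-sD_\pg}$ for every $s$ in the interval swept out by $s(t)$, which contains a neighbourhood of $0$. Writing $\Phi(s)=\Phi(s)^t$ and rearranging yields $[\,S,\ e^{sD_\pg^t}e^{sD_\pg}\,]=0$ for those $s$; differentiating twice at $s=0$ and using $D_\pg=S+A$, $D_\pg^t=S-A$ (so that the second derivative of $e^{sD_\pg^t}e^{sD_\pg}$ at $0$ is $4S^2+2[S,A]$) gives $\ad(S)^2(A)=[S,[S,A]]=0$. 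Since $S$ is symmetric, $\ad(S)$ is semisimple, so $[S,A]=0$.

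It remains to deduce \eqref{as} from $[S,A]=0$. Put $S_\ggo:=\left[\begin{smallmatrix}0&0\\0&S\end{smallmatrix}\right]$, $A_\ggo:=\left[\begin{smallmatrix}0&0\\0&A\end{smallmatrix}\right]$, so $D=S_\ggo+A_\ggo$ with $[S_\ggo,A_\ggo]=0$, and extend $\ip_\vp$ to an inner product on $\ggo$ by declaring $\kg\perp\pg$, which makes $S_\ggo$ symmetric and $A_\ggo$ skew-symmetric. From $D\in\Der(\ggo)$ we get $e^{tD}=e^{tS_\ggo}e^{tA_\ggo}\in\Aut(\ggo)$, i.e.\ $e^{tS_\ggo}\cdot\lb=e^{-tA_\ggo}\cdot\lb$ for all $t\in\RR$; since $e^{-tA_\ggo}$ is orthogonal and hence acts isometrically on $\Lambda^2\ggo^*\otimes\ggo$, the function $t\mapsto|e^{tS_\ggo}\cdot\lb|^2$ is constant. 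On the other hand, in an $\ip$-orthonormal eigenbasis $\{e_i\}$ of $S_\ggo$ with $S_\ggo e_i=\lambda_i e_i$ and $[e_i,e_j]=\sum_k c_{ij}^k e_k$, this function equals $\sum_{i,j,k}(c_{ij}^k)^2 e^{2t(\lambda_k-\lambda_i-\lambda_j)}$, a sum of real exponentials with nonnegative coefficients; being constant, it forces $c_{ij}^k=0$ whenever $\lambda_k\ne\lambda_i+\lambda_j$, which is exactly the statement $S_\ggo\in\Der(\ggo)$. Hence $Q_\vp=cI+S$ with $\left[\begin{smallmatrix}0&0\\0&S\end{smallmatrix}\right]\in\Der(\ggo)$, so condition \eqref{as} holds and $(G/K,\vp)$ is an algebraic soliton.

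I expect the last paragraph to be the main point: passing from the purely algebraic identity $[S,A]=0$ to the fact that $S_\ggo$ (equivalently $A_\ggo$) is itself a derivation, i.e.\ integrates to a one-parameter group of automorphisms of $G$; the constant-norm/nonnegative-exponential-sum device is the crux there, and is the $G_2$ analogue of an argument used for Ricci solitons in \cite{homRS}. The remaining ingredients are routine and should just be checked: that closedness is preserved along the flow, so that Proposition \ref{Qphi-form} applies to every $\vp(t)$; the displayed formula for $Q_{\vp(t)}$ (from Proposition \ref{sas-der}, $e^{-s(t)D}\cdot\lb=\lb$, and Theorem \ref{BF-thm}); and the reduction to a simply connected presentation. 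The one mild subtlety is that the Laplacian flow diagonal hypothesis supplies $\ip_\vp$-symmetry of $Q_{\vp(t)}$, not merely the (automatic) $\ip_{\vp(t)}$-symmetry — but this is exactly what makes the argument of the second paragraph go through.
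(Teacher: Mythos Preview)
Your proof is correct and reaches the same intermediate conclusion as the paper---namely $[S,A]=0$, i.e.\ $D_\pg$ is normal---but via a different route on both ends. For the middle step, the paper uses the \emph{commutativity} of the family $\{Q_{\vp(t)}\}$ (equivalent, in the closed case, to simultaneous diagonalizability): differentiating $[e^{sD_\pg}Q_\vp e^{-sD_\pg},Q_\vp]=0$ once at $s=0$ gives $[[D_\pg,Q_\vp],Q_\vp]=0$, and then the trace identity $0=\tr D_\pg[[D_\pg,Q_\vp],Q_\vp]=-\tr[D_\pg,Q_\vp]^2$ together with the symmetry of $[D_\pg,Q_\vp]=\tfrac{1}{2}[D_\pg,D_\pg^t]$ forces it to vanish. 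You instead extract the $\ip_\vp$-\emph{self-adjointness} of each $Q_{\vp(t)}$ from the diagonal hypothesis, rearrange to $[S,e^{sD_\pg^t}e^{sD_\pg}]=0$, differentiate twice, and invoke the semisimplicity of $\ad(S)$ to kill $[S,A]$. For the final step, the paper simply cites the ``well-known'' fact that the transpose of a normal derivation of a metric Lie algebra is again a derivation, whereas your constant-norm/nonnegative-exponential-sum argument is exactly a self-contained proof of that fact. So your version is more self-contained at the end, while the paper's trace trick is a touch slicker in the middle; both are short and entirely valid.
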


\begin{remark}
In particular, any closed semi-algebraic soliton which is equivalent to an algebraic soliton must be an algebraic soliton itself.
\end{remark}

\begin{proof}
We can assume that condition \eqref{sas-D} holds for $(G/K,\vp)$.  By using that $Q_{\vp(t)} = (-2ct+1)^{-1}e^{s(t)D_\pg} Q_{\vp} e^{-s(t)D_\pg}$, we obtain from the Laplacian flow diagonal condition that
\[
[e^{sD_\pg}Q_{\vp}e^{-sD_{\pg}}, Q_{\vp}] = 0,  \qquad \forall s\in(-\epsilon,\epsilon).
\]
Now this implies that $[[D_\pg,Q_{\vp}],Q_{\vp}] = 0$, and so
$$
0=\tr{D_\pg[[D_\pg,Q_{\vp}],Q_{\vp}]}= -\tr{[D_\pg,Q_{\vp}]^2}.
$$
It follows that $[D_\pg,Q_{\vp}]=\unm[D_\pg,D_\pg^t]=0$ as it is symmetric, and thus $D_\pg$ is normal. Hence $D$ is normal as well relative to any extension of the inner product $\ip_\vp$ to $\ggo$, from which we obtain that $D^t\in \Der(\ggo)$ since it is well known that the transpose of a normal derivation of a metric Lie algebra is again a derivation. Thus $Q_\vp = cI + \unm(D+D^t)_\pg$, with $\unm(D+D^t) \in \Der(\ggo)$,  showing that $(G/K,\vp)$ is an algebraic soliton and concluding the proof.
\end{proof}

\section{Almost abelian solvmanifolds}\label{muA-sec}

We study in this section the Laplacian flow and its solitons in a class of solvable Lie groups which is relatively simple from the algebraic point of view but yet geometrically rich and exotic.

Let $(G,\vp)$ be a Lie group endowed with a left-invariant $G_2$-structure $\vp$.  Assume that the Lie algebra $\ggo$ of $G$ has a codimension-one abelian ideal $\hg$.  These Lie algebras are often called {\it almost abelian} in the literature.  It was proved in \cite{Frb1,Frb2} that there exists an orthonormal basis $\{ e_1,\dots,e_7\}$ of $\ggo$ with respect to $\ip_\vp$ such that $\hg=\spann\{e_1,\dots,e_6\}$ and
\begin{equation}\label{phiA}
\vp=\omega\wedge e^7+\rho^+=e^{127}+e^{347}+e^{567}+e^{135}-e^{146}-e^{236}-e^{245},
\end{equation}
where
$$
\omega:=e^{12}+e^{34}+e^{56}, \qquad \rho^+:=e^{135}-e^{146}-e^{236}-e^{245}.
$$
We note that each of these Lie algebras is completely determined by the real $6\times 6$ matrix
$$
A:=\ad{e_7}|_{\hg},
$$
and so its Lie bracket will be denoted by $\mu_A$ and the corresponding simply connected Lie group by $G_A$.

\begin{note}
Identification of linear maps with matrices will always be done via the ordered basis $\{ e_1,e_3,e_5,e_2,e_4,e_6,e_7\}$, unless otherwise stated.
\end{note}

Thus $\mu_A$ is solvable, $G_A$ is diffeomorphic to $\RR^7$, $\hg$ is always an abelian ideal (which is the nilradical of $\mu_A$ if and only if $A$ is not nilpotent) and $\mu_A$ is nilpotent if and only if $A$ is a nilpotent matrix.  It is not hard to see that $\mu_A$ is isomorphic to $\mu_B$ if and only if the matrices $A$ and $B$ are conjugate up to a nonzero scaling.

The nondegenerate $2$-form $\omega\in\Lambda^2\hg^*$ can be written as $\omega=\la J\cdot,\cdot\ra_\vp$, where
$$
J=\left[\begin{array}{c|c}
0&-I\\\hline
I& 0
\end{array}\right].
$$
Given $A\in\glg_6(\RR)$, let $(G_A,\vp)$ denote the Lie group $G_A$ endowed with the left-invariant $G_2$-structure defined by the fixed positive $3$-form $\vp$ given in \eqref{phiA}.

\begin{proposition}\label{formA}
$\mbox{ }$
\begin{itemize}
\item[(i)] \cite{Frb1} $(G_A,\vp)$ is closed if and only if the matrix $A$ belongs to
\begin{align*}
\slg(3,\CC):=&\left\{ A\in\glg_6(\RR):AJ=JA, \quad\tr{A}=\tr{AJ}=0\right\} \\
=&\left\{\left[\begin{array}{c|c}
B&-C\\\hline
C&B
\end{array}\right] : B,C\in\slg_3(\RR)\right\}.
\end{align*}

\item[(ii)] \cite{Frb2} $(G_A,\vp)$ is coclosed if and only if $A$ is in
\begin{align*}
\spg(3,\RR):=&\left\{ A\in\glg_6(\RR):A^tJ+JA=0\right\} \\
=& \left\{\left[\begin{array}{c|c}
B&C\\\hline
D&-B^t
\end{array}\right] : C,D\in\sym(3)\right\}.
\end{align*}
\end{itemize}
\end{proposition}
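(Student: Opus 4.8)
The plan is to read off the structure equations of $\mu_A$, use them to compute $d\vp$ and $d\psi$ with $\psi:=\ast_\vp\vp$, and then recognise the vanishing of each as membership of $A$ in $\slg(3,\CC)$, resp.\ $\spg(3,\RR)$, moving between the two equivalent descriptions of these Lie algebras by a block matrix computation. Since $\hg$ is an abelian ideal with $\ad_{e_7}|_{\hg}=A$, the relation $d\alpha(X,Y)=-\alpha([X,Y])$ for left-invariant $\alpha\in\ggo^*$ gives $de^7=0$ and $de^k=\sum_i A_{ki}\,e^i\wedge e^7$ for $k\le 6$; hence, for any $\alpha\in\Lambda^p\hg^*$,
\begin{equation}\label{pf-str}
d\alpha=(-1)^{p-1}\,(L_A\alpha)\wedge e^7,
\end{equation}
where $L_A$ is the derivation of $\Lambda\hg^*$ acting on a $p$-form by $(L_A\alpha)(X_1,\dots,X_p)=\sum_j\alpha(X_1,\dots,AX_j,\dots,X_p)$. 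In particular $d(\gamma\wedge e^7)=0$ for every $\gamma\in\Lambda^p\hg^*$, since $(L_A\gamma)\wedge e^7\wedge e^7=0$.

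\emph{Part (i).} Plugging $\vp=\omega\wedge e^7+\rho^+$ into \eqref{pf-str} gives $d\vp=d(\omega\wedge e^7)+d\rho^+=(L_A\rho^+)\wedge e^7$, so $(G_A,\vp)$ is closed exactly when $L_A\rho^+=0$. Now $\rho^+=\Rea\Omega$ for the complex volume form $\Omega:=(e^1+\im e^2)\wedge(e^3+\im e^4)\wedge(e^5+\im e^6)$ attached to the complex structure $J$ above (so the $1$-forms $e^{2k-1}+\im e^{2k}$ are of type $(1,0)$). As $\Sl(3,\CC)\subset\Gl_6(\RR)$ fixes $\Omega$, hence $\rho^+$, we have $\slg(3,\CC)\subseteq\{A:L_A\rho^+=0\}$, and equality holds by dimension count since $\rho^+$ is a stable $3$-form (its $\Gl_6(\RR)$-orbit is open, so $\dim\{A:L_A\rho^+=0\}=36-20=16=\dim_{\RR}\slg(3,\CC)$). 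One can also bypass stability by splitting $A$ into its $J$-commuting and $J$-anticommuting parts and noting that the induced action on $\Lambda^{3,0}\oplus\Lambda^{2,1}$ forces $A$ to be complex-linear with $\tr_{\CC}A=0$. This is exactly the first description $\{A:AJ=JA,\ \tr A=\tr(AJ)=0\}$; writing $A=\left[\begin{smallmatrix}P&Q\\ R&S\end{smallmatrix}\right]$ in the $3+3$ splitting underlying $J=\left[\begin{smallmatrix}0&-I\\ I&0\end{smallmatrix}\right]$, these conditions read $S=P$, $R=-Q$, $\tr P=\tr Q=0$, i.e.\ the stated block form.

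\emph{Part (ii).} I would use the standard formula $\psi=\ast_\vp\vp=\tfrac12\,\omega\wedge\omega+\sigma\wedge e^7$ for the Hodge dual of an $\SU(3)$-type $3$-form, with $\sigma\in\Lambda^3\hg^*$ (namely $\sigma=\pm\Ima\Omega$, but its exact value is irrelevant). Then $d(\sigma\wedge e^7)=0$, while $d(\tfrac12\,\omega\wedge\omega)=\omega\wedge d\omega=-\,\omega\wedge(L_A\omega)\wedge e^7$ by \eqref{pf-str}, so $d\psi=-\,\omega\wedge(L_A\omega)\wedge e^7$ and $(G_A,\vp)$ is coclosed iff $\omega\wedge(L_A\omega)=0$ in $\Lambda^4\hg^*$. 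Because $\dim\hg=6$ and $\omega$ is nondegenerate, the Lefschetz multiplication $\omega\wedge(\cdot)\colon\Lambda^2\hg^*\to\Lambda^4\hg^*$ is an isomorphism, so this is equivalent to $L_A\omega=0$. Finally, from $\omega=\la J\cdot,\cdot\ra_\vp$ one gets $(L_A\omega)(X,Y)=\omega(AX,Y)+\omega(X,AY)=\la(A^tJ+JA)X,Y\ra_\vp$, hence $L_A\omega=0\iff A^tJ+JA=0$, the first description of $\spg(3,\RR)$; in the same block notation this forces $R=R^t$, $Q=Q^t$, $S=-P^t$, which is the block form $\left[\begin{smallmatrix}B&C\\ D&-B^t\end{smallmatrix}\right]$ with $C,D$ symmetric.

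The computations are routine. The two steps to watch are: (a) that after wedging with $e^7$ every contribution to $d\vp$ and to $d\psi$ other than $(L_A\rho^+)\wedge e^7$, resp.\ $-\,\omega\wedge(L_A\omega)\wedge e^7$, drops out — this uses exactly $de^7=0$ together with the factor $e^7$ already present in $\omega\wedge e^7$ and in $\sigma\wedge e^7$; and (b) the Lefschetz isomorphism $\Lambda^2\hg^*\cong\Lambda^4\hg^*$ in part (ii), which is what converts the $4$-form identity $\omega\wedge(L_A\omega)=0$ into the clean linear condition $A^tJ+JA=0$. The only genuinely non-elementary ingredient is the stability of $\rho^+$ (or the $(p,q)$-type argument replacing it) used in part (i); the rest is bookkeeping.
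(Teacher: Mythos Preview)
Your argument is correct and follows the same skeleton as the paper's: compute $d\vp$ and $d\psi$ via the structure equation $d\alpha=(-1)^{p-1}(L_A\alpha)\wedge e^7$ (the paper's Lemma~\ref{tech2}(ii), with $L_A=-\theta(A)$), observe that the $e^7$-terms drop out, and reduce to identifying the stabilisers of $\rho^+$ and $\omega$ in $\glg_6(\RR)$. The paper does not prove Proposition~\ref{formA} directly but notes (in the remark after Proposition~\ref{DeltaA}) that the second lines of those Hodge-Laplacian computations yield it; for part~(i) it invokes the complex volume form exactly as you do (Lemma~\ref{tech2}(iv)).

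The one genuine difference is in part~(ii): the paper rewrites $\tfrac12\theta(A)(\omega\wedge\omega)=\theta(A)\ast_\hg\omega=-\ast_\hg\theta(A^t)\omega$ via the identity $\theta(A)\ast_\hg=-\ast_\hg\theta(A^t)$ (Lemma~\ref{tech2}(vi)), landing on $\theta(A^t)\omega=0$, whereas you use the Lefschetz isomorphism $\omega\wedge(\cdot)\colon\Lambda^2\hg^*\to\Lambda^4\hg^*$ to pass directly from $\omega\wedge L_A\omega=0$ to $L_A\omega=0$. Your route is slightly cleaner: the paper's identity is stated only for $\tr A=0$, so a priori one would need to handle the trace separately, while the Lefschetz map needs no such hypothesis. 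Both arrive at $A^tJ+JA=0$.
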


\begin{remark}
Part (ii) is proved in \cite{Frb2} for the $3$-form $\omega\wedge e^7-\rho^-$, but it is easily seen to be valid for $\vp$ as well (see Lemma \ref{tech1}, (iv) for the definition of $\rho^-$).
\end{remark}

We note that $\SU(3):=\left\{ h\in\SO(6):hJ=Jh\right\}=\Sl(3,\CC)\cap\Spe(3,\RR)$ can be homomorphically imbedded in $G_2=G_2(\vp)$ as
$$
\left[\begin{array}{c|c}
\SU(3) & 0 \\ \hline
0 & 1
\end{array}\right]\subset G_2.
$$
According to Proposition \ref{formA}, $(G_A,\vp)$ is torsion-free if and only if $A\in\sug(3)=\slg(3,\CC)\cap\spg(3,\RR)$.

Recall from Section \ref{homog-sec} that $(G_A,\vp)$ and $(G_B,\vp)$ are said to be equivariantly equivalent if they are equivalent as $G_2$-structures via a Lie group isomorphism, that is, if and only if there exists a Lie algebra isomorphism $h:(\ggo,\mu_A)\longrightarrow(\ggo,\mu_B)$ such that $h^*\vp=\vp$ (i.e.\ $h\in G_2$).

\begin{proposition}\label{iso-equiv}
If either $B=hAh^{-1}$ for some $h\in\SU(3)\subset G_2$, or $B=-hAh^{-1}$ for some $h\in\Or(6)$ such that $\det{h}=-1$ and $hJh^{-1}=-J$, then $(G_A,\vp)$ and $(G_B,\vp)$ are equivariantly equivalent.  The converse holds if neither $A$ nor $B$ are nilpotent.
\end{proposition}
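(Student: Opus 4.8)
\emph{Proof proposal.} I would prove the two implications separately. For the ``if'' direction the plan is to exhibit, in each of the two cases, an explicit invertible $\tilde h\in\glg(\ggo)$ preserving the splitting $\ggo=\hg\oplus\RR e_7$, to verify directly that it intertwines $\mu_A$ and $\mu_B$ (which collapses to the prescribed relation between $A$, $B$ and $h$, because $\hg$ is abelian for both brackets), and then to check that $\tilde h\in G_2(\vp)$. For the ``only if'' direction the plan is to start from an arbitrary Lie algebra isomorphism $\phi\colon(\ggo,\mu_A)\to(\ggo,\mu_B)$ with $\phi^*\vp=\vp$ and to pin down its shape from two facts: $\phi$ must carry the nilradical of $\mu_A$ onto that of $\mu_B$, and $\phi\in G_2(\vp)\subset\SO(7)$ is orthogonal for $\ip_\vp$.

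In the first case of the ``if'' direction ($B=hAh^{-1}$, $h\in\SU(3)$) I would take $\tilde h:=\left[\begin{smallmatrix} h&0\\ 0&1\end{smallmatrix}\right]$. Since $\mu_A(e_7,X)=AX$ and $\mu_B(e_7,X)=BX$ for $X\in\hg$ while $\mu_A,\mu_B$ vanish on $\hg\times\hg$, the identity $Bh=hA$ makes $\tilde h$ a Lie algebra isomorphism $(\ggo,\mu_A)\to(\ggo,\mu_B)$; and $\tilde h\in G_2(\vp)$ because $\SU(3)$ sits inside $G_2(\vp)$ precisely as $\left[\begin{smallmatrix} \SU(3)&0\\ 0&1\end{smallmatrix}\right]$ (recalled before the statement). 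In the second case ($B=-hAh^{-1}$, $h\in\Or(6)$, $\det h=-1$, $hJh^{-1}=-J$) I would take $\tilde h:=\left[\begin{smallmatrix} h&0\\ 0&-1\end{smallmatrix}\right]$; here $Bh=-hA$ together with $\tilde he_7=-e_7$ again make $\tilde h$ a Lie algebra isomorphism. To see $\tilde h\in G_2(\vp)$ I would use the splitting $\vp=\omega\wedge e^7+\rho^+$: as $\omega=\la J\cdot,\cdot\ra_\vp$ and $h$ is $\ip_\vp$-orthogonal, $hJh^{-1}=-J$ forces $h^*\omega=-\omega$, hence $\tilde h^*(\omega\wedge e^7)=(-\omega)\wedge(-e^7)=\omega\wedge e^7$, while $\tilde h^*\rho^+=h^*\rho^+$; so $\tilde h^*\vp=\vp$ provided $h^*\rho^+=\rho^+$. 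A priori one knows only that $h^*\rho^+$ lies in the plane $\spann\{\rho^+,\rho^-\}$ ($\rho^+,\rho^-$ being the real and imaginary parts of a complex volume form on $(\hg,J)$, which $h$ conjugates), but $h$ may be replaced by $he^{\tfrac{1}{3}\vartheta J}$ for a suitable $\vartheta\in\RR$ without changing $\det h$, $hJh^{-1}=-J$ or $-hAh^{-1}$ — the last because $e^{\tfrac{1}{3}\vartheta J}$ is a complex scalar, hence commutes with $A$ whenever $A$ commutes with $J$ (in particular when $\vp$ is closed, i.e.\ $A\in\slg(3,\CC)$) — and the adjusted $h$ does satisfy $h^*\rho^+=\rho^+$. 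Either way $\tilde h\in G_2(\vp)$, so $(G_A,\vp)$ and $(G_B,\vp)$ are equivariantly equivalent.

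For the ``only if'' direction, assume $A,B$ are not nilpotent and let $\phi$ be as above. First I would observe that the nilradical of $\mu_A$ is exactly $\hg$: it contains the abelian ideal $\hg$, and if it were strictly larger it would be all of $\ggo$ since $\hg$ has codimension one, making $\mu_A$, hence $A=\ad_{\mu_A}{e_7}|_\hg$, nilpotent — a contradiction; the same holds for $\mu_B$. Hence $\phi(\hg)=\hg$, and since $\phi$ is $\ip_\vp$-orthogonal and $\hg^{\perp}=\RR e_7$, also $\phi(\RR e_7)=\RR e_7$, so $\phi e_7=\varepsilon e_7$ with $\varepsilon\in\{\pm1\}$. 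Writing $h:=\phi|_\hg\in\Or(6)$ we get $\phi=\left[\begin{smallmatrix} h&0\\ 0&\varepsilon\end{smallmatrix}\right]$ with $\det h=\varepsilon\det\phi=\varepsilon$; evaluating the homomorphism property of $\phi$ on $(e_7,X)$, $X\in\hg$, gives $hAX=\varepsilon BhX$, i.e.\ $B=\varepsilon\,hAh^{-1}$. Finally, $\phi^*\vp=\vp$ becomes $\varepsilon(h^*\omega)\wedge e^7+h^*\rho^+=\omega\wedge e^7+\rho^+$; comparing the summands lying respectively in $\Lambda^2\hg^*\wedge e^7$ and in $\Lambda^3\hg^*$ yields $h^*\omega=\varepsilon\omega$ and $h^*\rho^+=\rho^+$. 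If $\varepsilon=1$ this says $h\in\SO(6)$ with $h^*\omega=\omega$ and $h^*\rho^+=\rho^+$, i.e.\ $\left[\begin{smallmatrix} h&0\\ 0&1\end{smallmatrix}\right]\in G_2(\vp)$, so $h\in\SU(3)$ and $B=hAh^{-1}$; if $\varepsilon=-1$ this says $\det h=-1$ and $h^*\omega=-\omega$, equivalently $hJh^{-1}=-J$, and $B=-hAh^{-1}$. These are the two asserted alternatives.

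The computations involved are short; the point I expect to require the most care is the second (``anti-holomorphic'') case of the ``if'' direction, namely certifying $\left[\begin{smallmatrix} h&0\\ 0&-1\end{smallmatrix}\right]\in G_2(\vp)$, equivalently $h^*\rho^+=\rho^+$: this is \emph{not} implied by $\det h=-1$ and $hJh^{-1}=-J$ alone, so it must be either built into the hypothesis or removed via the central-phase adjustment above. The hypothesis that neither $A$ nor $B$ is nilpotent is used only in the converse, precisely to identify $\hg$ with the nilradical and so force $\phi(\hg)=\hg$.
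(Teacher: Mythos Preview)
Your overall strategy is the same as the paper's: show that any equivariant equivalence must be block-diagonal of the form $\left[\begin{smallmatrix}h_1&0\\0&h_0\end{smallmatrix}\right]$ with $h_0=\pm 1$, using that $\hg$ is the nilradical when $A,B$ are non-nilpotent, and then read off the two alternatives from $h^*\vp=\vp$ and $\mu_B=h\cdot\mu_A$. For the ``if'' direction both you and the paper build the same $7\times 7$ matrix and check it is a Lie algebra isomorphism.

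You are in fact more careful than the paper on one point. In the second (anti-holomorphic) case the paper simply asserts that the constructed $\tilde h=\left[\begin{smallmatrix}h&0\\0&-1\end{smallmatrix}\right]$ lies in $G_2(\vp)$, but as you observe, the hypotheses $h\in\Or(6)$, $\det h=-1$, $hJh^{-1}=-J$ give only $h^*\omega=-\omega$; they do \emph{not} force $h^*\rho^+=\rho^+$. Indeed any such $h$ is of the form $u\sigma$ with $u\in\U(3)$ and $\sigma$ complex conjugation, and $h^*\rho^+=\rho^+$ holds iff $\det_{\CC}u=1$. Your central-phase adjustment $h\mapsto he^{\tfrac{1}{3}\vartheta J}$ is the right repair: it preserves $\det h$ and $hJh^{-1}=-J$, rotates the coefficient of $\overline\alpha$ to $1$, and leaves $-hAh^{-1}$ unchanged provided $[A,J]=0$. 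So your argument is complete in the closed case $A\in\slg(3,\CC)$ (and by the analogous observation, whenever $A$ commutes with $J$), which is the setting of all the paper's applications. For general $A$ the adjustment need not fix $-hAh^{-1}$, and neither your write-up nor the paper's covers that case; this is a gap in the proposition as literally stated rather than in your proof. You are right to flag it.
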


\begin{remark}\label{iso-equiv-rem}
In the closed case, i.e.\ $A,B\in\slg(3,\CC)$, if we view all these matrices as complex $3\times 3$ matrices, then what the proposition is asserting is that $(G_A,\vp)$ and $(G_B,\vp)$ are equivariantly equivalent as soon as $B$ is $\SU(3)$-conjugate to $A$ or $\overline{A}$.
\end{remark}

\begin{proof}
Note that $(G_A,\vp)$ and $(G_B,\vp)$ are equivariantly equivalent if and only if $\mu_B=h\cdot\mu_A$ for some $h\in G_2\subset\SO(7)$.  We first prove the converse assertion.  If they are equivariantly equivalent, then it is easy to see by using that $h$ must leave $\hg$ invariant (notice that $\hg$ is the nilradical of both Lie algebras since $A$ and $B$ are both non-nilpotent) that such an $h$ must have the form
$$
h=\left[\begin{array}{c|c}
&\\ \quad h_1 \quad & 0 \\ &\\ \hline
0 & h_0
\end{array}\right], \qquad \mbox{for some} \quad h_1\in\Or(6), \quad h_0=\det{h_1}=\pm 1.
$$
Now condition $h^*\vp=\vp$ implies that $h_1^*\omega=h_0\omega$, which is equivalent to $h_1Jh_1^{-1}=h_0J$.  Since condition $\mu_B=h\cdot\mu_A$ is equivalent to $B=h_0h_1Ah_1^{-1}$, the two alternatives in the proposition correspond to $h_0=1$ and $h_0=-1$, respectively.

It is now clear that conversely, if we construct $h$ as above by setting $h_1$ as the map $h$ in the proposition, then $h$ defines an equivariant equivalence between $(G_A,\vp)$ and $(G_B,\vp)$, without any assumption on $A,B$, concluding the proof.
\end{proof}

The following result shows that two left-invariant $G_2$-structures on non-isomorphic Lie groups can indeed be equivalent (in particular, without being equivariantly equivalent).  Our proof is strongly based on the proof by Heber of \cite[Proposition 2.5]{Hbr} in his Habilitationsschrift (see \cite[Proposition 2.5]{Hbr2}).

\begin{proposition}\label{equivA}
Assume that $A=A_1+A_2$, $A_2\in\sug(3)$ and $[A_1,A_2]=0$.  Then the $G_2$-structures $(G_A,\vp)$ and $(G_{A_1},\vp)$ are equivalent.
\end{proposition}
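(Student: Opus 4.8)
The plan is to realise both $G_A$ and $G_{A_1}$ as the manifold $\RR^7=\RR^6\rtimes\RR$ in the obvious semidirect-product coordinates and to show that, in these coordinates, the left-invariant $3$-form $\vp$ on $G_A$ and the one on $G_{A_1}$ are literally the \emph{same} differential form on $\RR^7$; the identity map of $\RR^7$ is then the required equivalence. This is the $G_2$-analogue of Heber's observation in \cite{Hbr} that twisting a solvmanifold by a skew-symmetric commuting summand alters the Lie algebra but not the underlying geometric tensor.

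First I would write $G_A=\RR^6\rtimes_{t\mapsto e^{tA}}\RR$ with coordinates $(x^1,\dots,x^6,t)$; a Maurer--Cartan computation gives the left-invariant coframe $f^i=\sum_j(e^{-tA})_{ij}\,dx^j$ ($i\le 6$) and $f^7=dt$, so that the left-invariant $\vp$ on $G_A$ is $\omega(f^1,\dots,f^6)\wedge f^7+\rho^+(f^1,\dots,f^6)$, i.e.\ the fixed form $\omega\wedge e^7+\rho^+$ with each $e^i$ replaced by $f^i$. The analogous statement holds for $G_{A_1}$ with the coframe $g^i=\sum_j(e^{-tA_1})_{ij}\,dx^j$, $g^7=dt$. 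Now the hypotheses enter: since $A=A_1+A_2$ with $[A_1,A_2]=0$, one has $e^{-tA}=e^{-tA_2}e^{-tA_1}$ and $e^{tA_1}$ commutes with $e^{-tA_2}$, whence $f^i=\sum_k(e^{-tA_2})_{ik}\,g^k$ for $i\le 6$ and $f^7=g^7$. In other words, at every point the $G_A$-coframe is obtained from the $G_{A_1}$-coframe by applying $P(t):=e^{-tA_2}$ on $\hg^*$ and the identity in the $e^7$-direction. Because $A_2\in\sug(3)$, we get $P(t)\in\SU(3)\subset G_2=G_2(\vp)$ for all $t$; and $\SU(3)$ fixes $e^7$ and preserves both $\omega$ and $\rho^+$, so substituting $f^i=\sum_k P(t)_{ik}g^k$ into $\omega$ and $\rho^+$ gives back $\omega(g^1,\dots,g^6)$ and $\rho^+(g^1,\dots,g^6)$ unchanged. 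Hence the left-invariant $\vp$ on $G_A$ equals $\omega(g)\wedge g^7+\rho^+(g)$, which is the left-invariant $\vp$ on $G_{A_1}$, and $(G_A,\vp)=(G_{A_1},\vp)$ as manifolds with $G_2$-structure.

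The computation is essentially forced, so I do not expect a serious obstacle; the two points needing care are (a) that one must use $A_2\in\sug(3)$ rather than merely $A_2$ skew-symmetric and $J$-commuting, i.e.\ $A_2\in\mathfrak{u}(3)$, precisely because it is $\SU(3)$ -- not $\U(3)$ -- that preserves $\rho^+$ and not just $\omega$; and (b) checking that the pointwise $\SU(3)$-invariance of $\omega$ and $\rho^+$ really survives the substitution by the $t$-dependent matrix $e^{-tA_2}$, i.e.\ that no spurious $dt$-terms are produced (they are not, since the $g^i$ carry no $dt$ and we are only substituting one horizontal coframe for another via a pointwise linear map). I would close by remarking that $G_A$ and $G_{A_1}$ are in general non-isomorphic as Lie groups (e.g.\ $A_1=0$, $A_2\neq 0$, so $G_{A_1}$ is abelian and $G_A$ is not), so this indeed furnishes the announced example of equivalent but not equivariantly equivalent left-invariant $G_2$-structures, and it shows that $\Aut(\RR^7,\vp)$ contains both $G_A$ and $G_{A_1}$ as transitive subgroups.
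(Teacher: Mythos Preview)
Your proof is correct. Both you and the paper are implementing Heber's trick, but the executions differ. The paper works abstractly at the group level: it builds a Lie group $G_1=\{L_s\circ\alpha(s):s\in G_A\}$ inside $\Aut(G_A,\vp)$, where $\alpha:G_A\to\Aut(\ggo_A)\cap G_2$ is the homomorphism determined by $\alpha(e_7)=\left[\begin{smallmatrix}-A_2&0\\0&0\end{smallmatrix}\right]$; this $G_1$ acts simply transitively on $G_A$ preserving $\vp$, and a short bracket computation in $\ggo_A\rtimes\fg$ shows $\ggo_1\simeq\ggo_{A_1}$. You instead realise both groups in global semidirect-product coordinates on $\RR^7$, write down the Maurer--Cartan coframes explicitly, and observe that the $G_A$-coframe is the $G_{A_1}$-coframe rotated pointwise by $e^{-tA_2}\in\SU(3)\subset G_2(\vp)$, so the two left-invariant $3$-forms are literally the same differential form on $\RR^7$ and the identity map is the equivalence.

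Your route is more elementary and makes the diffeomorphism completely explicit (it is the identity in these coordinates), whereas the paper's formulation extends more readily to situations where one does not have a global coordinate model---it only uses that $-A_2\in\Der(\ggo_A)\cap\ggo_2$, and would apply verbatim to other homogeneous spaces and other geometric structures. The two arguments are of course dual: your pointwise $\SU(3)$-twist $e^{-tA_2}$ of the coframe is exactly the automorphism $\alpha(\exp(te_7))$ appearing in the paper's construction, viewed in coordinates. Your caveat (b) about spurious $dt$-terms is well placed and correctly dispatched; it is the one spot where a careless write-up could go wrong.
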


\begin{proof}
We denote by $\ggo_A:=(\ggo,\mu_A)$, the Lie algebra of $G_A$.  Consider the Lie group
$$
F:=\Aut(G_A)\cap\Aut(G_A,\vp)\simeq\Aut(\ggo_A)\cap G_2,
$$
with Lie algebra $\fg:=\Der(\ggo_A)\cap\ggo_2$, the homomorphism $\alpha:\ggo_A\longrightarrow\fg$ defined by
$$
\alpha(e_7)=
\left[\begin{array}{c|c}
&\\ \quad -A_2 \quad & 0 \\ &\\ \hline
0 & 0
\end{array}\right], \qquad \alpha|_\hg\equiv 0,
$$
and denote also by $\alpha$ the corresponding Lie group homomorphism $G_A\longrightarrow F$.  By using that $G_A=\exp{\RR e_7}\ltimes\exp{\hg}$, it is easy to see that
$$
G_1:=\{ L_s\circ\alpha(s):s\in G_A\}\subset L(G_A)F\subset\Aut(G_A,\vp),
$$
is a subgroup, where $L:G_A\longrightarrow\Aut(G_A,\vp)$ is the left-multiplication morphism.  Thus $G_1$ is a connected and closed Lie subgroup of $\Aut(G_A,\vp)$ since $s\mapsto L_s\circ\alpha(s)$ is continuous and proper.  But $G_1$ acts simply and transitively on $G_A$ by automorphisms of $\vp$, so as usual, the diffeomorphism $f:G_1\longrightarrow G_A$, $L_s\circ\alpha(s)\mapsto (L_s\circ\alpha(s))(e)=s$, defines an equivalence between the left-invariant $G_2$-structures $(G_1,f^*\vp)$ and $(G_A,\vp)$.  On the other hand, the Lie algebra of $G_1$ is given by
$$
\ggo_1:=\left\{ dL|_eX+\alpha(X):X\in\ggo_A\right\}\subset\ggo_A\rtimes\fg,
$$
and if $X=X_\hg+ae_7$, $Y=Y_\hg+be_7$ belong to $\ggo_A$, then
\begin{align*}
[dL|_eX+\alpha(X),dL|_eY+\alpha(Y)] =& \mu_A(X,Y) + \alpha(X)Y - \alpha(Y)X + \alpha([X,Y]) \\
=& aA_1Y_\hg+aA_2Y_\hg-bA_1X_\hg-bA_2X_\hg \\
& - aA_2Y_\hg + bA_2X_\hg + 0 \\
=& \mu_{A_1}(X,Y) = (dL|_e+\alpha)\mu_{A_1}(X,Y).
\end{align*}

This shows that $\ggo_1$ is isomorphic to $\ggo_{A_1}$ and that $(G_1,f^*\vp)$ is equivariantly equivalent to $(G_{A_1},\vp)$, which implies that $(G_{A},\vp)$ and $(G_{A_1},\vp)$ are equivalent.
\end{proof}

\begin{remark}
If we replace $\vp$ by an inner product $\ip$ on $\ggo$, $\Aut(G_A,\vp)$ by $\Iso(G_A,\ip)$ and $G_2$ by $\Or(\ggo,\ip)$, then the following result can be proved in exactly the same way as above for any dimension: $(G_A,\ip)$ is isometric to $(G_{A_1},\ip)$ for any $A=A_1+A_2$ in $\glg(\hg)$ such that $[A_1,A_2]=0$ and $A_2\in\sog(\hg,\ip)$.
\end{remark}

If $\tr{A}=0$, then the Ricci operator and scalar curvature of $(G_A,\ip_\vp)$ are respectively given by (see e.g.\ \cite[(8)]{Arr})
\begin{equation}\label{RicA}
\Ricci_A  =  \left[\begin{array}{c|c}
& \\
\unm[A, A^t] &  0 \\ & \\\hline
0 & -\unc\tr{(A+A^t)^2}
\end{array}\right], \qquad R_A= -\unc\tr{(A+A^t)^2}.
\end{equation}
The following conditions are therefore equivalent for a closed (or coclosed) $(G_A,\vp)$:
\begin{itemize}
\item $(G_A,\vp)$ is torsion-free.

\item $A^t=-A$ (i.e.\ $A\in\sog(6)$).

\item $R_A=0$.

\item $\Ricci_A=0$.

\item $(G_A,\ip_\vp)$ is flat.

\item $(G_A,\vp)$ is equivalent (but not equivariantly equivalent unless $A=0$) to the $G_2$ euclidean space $(\RR^7,\vp)$ (see Proposition \ref{equivA}).
\end{itemize}

\begin{example}\label{n2n6}
The nonabelian nilpotent Lie groups $G_A$ admitting a closed $G_2$-structure (i.e. with $A$ conjugated to an element in  $\slg(3,\CC)$) are exactly two and their Lie algebras have been denoted by $\ngo_2$ ($A^2=0$) and $\ngo_6$ ($A^3=0$ and $A^2\ne 0$) in \cite{FrnFinMnr,Ncl} (see \cite{Frb2}).  Since any nonzero $A\in\slg(3,\CC)$ such that $A^2=0$ is $\SU(3)$-conjugate up to scaling to
$$
A_0:=\left[\begin{smallmatrix}
0&0&1\\
&0&0\\
&&0
\end{smallmatrix}\right],
$$
it follows from Proposition \ref{iso-equiv} and Remark \ref{iso-equiv-rem} that the Lie group $G_{A_0}$ with Lie algebra $\ngo_2$ admits only one closed $G_2$-structure up to equivalence and scaling.  On the other hand, the matrices
$$
A_t:=\left[\begin{smallmatrix}
0&t&0\\
&0&1\\
&&0
\end{smallmatrix}\right]\in\slg(3,\CC), \qquad t>0,
$$
provides a continuous family of $G_2$-structures $(G_{A_t},\vp)$, or equivalently a family $(G_{A_1},\vp_t)$, where the Lie algebra of $G_{A_1}$ is $\ngo_6$, such that there is no any pair which is equivalent up to scaling.  Indeed, by \eqref{RicA}, the Ricci operator of $(G_{A_t},\ip_\vp)$ is given by
$$
\Ricci_t=\unm\Diag(t^2,1-t^2,-1,t^2,1-t^2,-1,-2(1+t^2)),
$$
so the ratio between its extreme eigenvalues equals $-t^2/2(1+t^2)$, an injective function on $(0,\infty)$, which implies that two of these Riemannian manifolds can never be isometric up to scaling.
\end{example}

\begin{example}
Given a diagonal matrix with three different real eigenvalues,
$$
A:=\left[\begin{smallmatrix}
a&&\\
&b&\\
&&c
\end{smallmatrix}\right]\in\slg(3,\CC),
$$
we know that the set of all closed $G_2$-structures on $G_A$ is parameterized by its $\Gl(3,\CC)$-conjugacy class, which has dimension $12=18-6$.  By Proposition \ref{iso-equiv}, the equivalence classes are given by $\SU(3)$-conjugacy classes, so up to equivalence and scaling, the set of all closed $G_2$-structures on $G_A$ depends on $5=12-8+2-1$ parameters.
\end{example}

\begin{example}
In the coclosed case, for a diagonal matrix $A\in\spg(3,\RR)$ with six different real eigenvalues, we obtain from Proposition \ref{iso-equiv} that up to equivalence and scaling, the set of all coclosed $G_2$-structures on $G_A$ depends on $9=21-3-8+0-1$ parameters.
\end{example}

The following technical lemma contains some basic though very useful information on the linear algebra involved in subsequent computations.  Recall from \eqref{phiA} the definition of $\omega$ and $\rho^+$.

\begin{lemma}\label{tech1}
Let $\ast:\Lambda^k\ggo^*\longrightarrow\Lambda^{7-k}\ggo^*$ and $\ast_\hg:\Lambda^k\hg^*\longrightarrow\Lambda^{6-k}\hg^*$ be the Hodge star operators determined by $\vp$, i.e. by the ordered bases $\{ e_1,\dots,e_7\}$ and $\{ e_1,\dots,e_6\}$, respectively.

\begin{itemize}
\item[(i)] $\ast\gamma=\ast_\hg\gamma\wedge e^7$, for any $\gamma\in\Lambda^k\hg^*$.
\item[ ]
\item[(ii)] $\ast(\gamma\wedge e^7)=(-1)^k\ast_\hg\gamma$, for any $\gamma\in\Lambda^k\hg^*$.
\item[ ]
\item[(iii)] $\ast_\hg\omega=\unm\omega\wedge\omega$ and $\ast_\hg(\omega\wedge\omega)=2\omega$.
\item[ ]
\item[(iv)] $\ast_\hg\rho^+=\rho^-$ and $\ast_\hg\rho^-=-\rho^+$, where $\rho^-:= -e^{246}+e^{235}+e^{145}+e^{136}$.
\item[ ]
\item[(v)] $\ast^2=id$ and $\ast_\hg^2=(-1)^k id$ on $\Lambda^k\hg^*$.
\end{itemize}
\end{lemma}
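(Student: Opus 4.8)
The plan is to reduce all five identities to the standard description of the Hodge star on monomials of an orthonormal coframe. Recall that on an oriented $n$-dimensional Euclidean space with orthonormal coframe $\{e^1,\dots,e^n\}$ and volume form $e^1\wedge\dots\wedge e^n$, the defining identity $\alpha\wedge\ast\beta=\la\alpha,\beta\ra\vol$ forces $\ast(e^{i_1}\wedge\dots\wedge e^{i_k})=\pm\, e^{j_1}\wedge\dots\wedge e^{j_{n-k}}$, where $\{j_\bullet\}$ is the complement of $\{i_\bullet\}$ in $\{1,\dots,n\}$ and the sign is that of the permutation sending $(1,\dots,n)$ to $(i_1,\dots,i_k,j_1,\dots,j_{n-k})$; for $n=7$ this is exactly \eqref{Hstar}, and for $n=6$ it is its obvious analog. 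Everything is linear, so it suffices to verify each identity on such monomials.

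For (i) and (ii) I would take $\gamma=e^{i_1}\wedge\dots\wedge e^{i_k}$ with all $i_\bullet\in\{1,\dots,6\}$ and let $\{j_1,\dots,j_{6-k}\}$ be the complement of $\{i_\bullet\}$ inside $\{1,\dots,6\}$. Computing $\ast\gamma$ in $\ggo^*$ amounts to the sign of the permutation carrying $(1,\dots,7)$ to $(i_\bullet,j_\bullet,7)$, while $\ast_\hg\gamma$ is governed by the sign carrying $(1,\dots,6)$ to $(i_\bullet,j_\bullet)$; since in both cases the index $7$ is already in last position, the two signs agree and $\ast\gamma=\ast_\hg\gamma\wedge e^7$, which is (i). For (ii) the relevant monomial is $\gamma\wedge e^7=e^{i_1}\wedge\dots\wedge e^{i_k}\wedge e^7$, whose complement in $\{1,\dots,7\}$ is $\{j_1,\dots,j_{6-k}\}\subset\{1,\dots,6\}$; comparing with the $\ast_\hg$-sign now requires moving $e^7$ past the $6-k$ factors $e^{j_\bullet}$, contributing $(-1)^{6-k}=(-1)^k$, so $\ast(\gamma\wedge e^7)=(-1)^k\ast_\hg\gamma$.

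Part (iii) I would check by direct computation: $\omega\wedge\omega=2(e^{1234}+e^{1256}+e^{3456})$, and the monomial formula gives $\ast_\hg e^{12}=e^{3456}$, $\ast_\hg e^{34}=e^{1256}$, $\ast_\hg e^{56}=e^{1234}$ (all three signs turn out to be $+1$), whence $\ast_\hg\omega=\unm\,\omega\wedge\omega$; the second identity then follows either by the analogous termwise computation or, more cheaply, by applying $\ast_\hg$ again and invoking part (v) on $\Lambda^4\hg^*$. Part (iv) is the same bookkeeping applied to the four monomials of $\rho^+$: one computes $\ast_\hg e^{135}=-e^{246}$, $\ast_\hg e^{146}=-e^{235}$, $\ast_\hg e^{236}=-e^{145}$, $\ast_\hg e^{245}=-e^{136}$, and assembling these with the signs in $\rho^+=e^{135}-e^{146}-e^{236}-e^{245}$ yields exactly $\ast_\hg\rho^+=\rho^-$; the relation $\ast_\hg\rho^-=-\rho^+$ is then immediate from $\ast_\hg^2=(-1)^3=-1$ on $\Lambda^3\hg^*$. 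Finally, (v) is the classical identity $\ast^2=(-1)^{k(n-k)}$ on $\Lambda^k$ of an oriented $n$-dimensional Euclidean space: for $n=7$ the exponent $k(7-k)$ is always even (one of $k$, $7-k$ is even), so $\ast^2=id$ (already noted just after \eqref{Hstar}), while for $n=6$ one gets $(-1)^{k(6-k)}=(-1)^k$.

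The only point requiring care is the sign bookkeeping in (iii) and (iv); there is no conceptual difficulty. As an alternative to the direct check in (iv) one could appeal to the fact that $(\omega,\rho^+)$ is the standard $\SU(3)$-structure on $\RR^6$, for which $\ast_\hg\rho^+$ is by definition the companion form $\rho^-$, but verifying the four monomials is quicker and keeps the argument self-contained.
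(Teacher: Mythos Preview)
Your proof is correct. The paper itself does not supply a proof of this lemma---it is stated as a technical fact and left to the reader---so your monomial-by-monomial verification using the sign formula \eqref{Hstar} and the classical identity $\ast^2=(-1)^{k(n-k)}$ is exactly the kind of routine check the author intended; all the sign computations you indicate in (iii) and (iv) are accurate.
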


Before computing the Hodge Laplacian operator, we give in the following lemma some properties of the differential of forms on the Lie group $G_A$.  Denote by $\theta:\glg(\hg)\longrightarrow\End(\Lambda^k\hg^*)$ the representation obtained as the derivative of the natural $\Gl(\hg)$-action on each $\Lambda^k\hg^*$, that is,
$$
\theta(A)\gamma = -\gamma(A\cdot,\dots,\cdot) - \dots -\gamma(\cdot,\dots,A\cdot), \qquad\forall\gamma\in\Lambda^k\hg^*.
$$

\begin{lemma}\label{tech2}
Let $d_A$ denote the differential of left-invariant forms on the Lie group $G_A$.

\begin{itemize}
\item [(i)] $d_Ae^7=0$ and
$$
d_Ae^i=\sum_{j=1}^6 a_{ij}e^{j7}, \quad i=1,\dots,6,
$$
where $A=[a_{ij}]$ is written in terms of the basis $\{ e_1,\dots,e_7\}$.
\item[ ]
\item[(ii)] $d_A\gamma=(-1)^k\theta(A)\gamma\wedge e^7$, for any $\gamma\in\Lambda^k\hg^*$.
\item[ ]
\item[(iii)] $d_A(\gamma\wedge e^7)=0$, for all $\gamma\in\Lambda^k\hg^*$.
\item[ ]
\item [(iv)] $\theta(A)\rho^+=0$ if and only if $\theta(A)\rho^-=0$, if and only if $A\in\slg(3,\CC)$.
\item[ ]
\item[(v)] $\theta(A)\omega=0$ if and only if $A\in\spg(3,\RR)$.
\item[ ]
\item[(vi)] $\theta(A)\ast_\hg = -\ast_\hg\theta(A^t)$ on $\Lambda\hg^*$, if $\tr{A}=0$.
\end{itemize}
\end{lemma}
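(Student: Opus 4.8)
The plan is to derive (i)--(iii) directly from the Chevalley--Eilenberg differential of the almost abelian Lie algebra, to obtain (iv) by recognizing $\theta(A)\rho^+\wedge e^7$ as $\pm d_A\vp$ and invoking Proposition \ref{formA}(i) (plus an explicit $\Gl_6(\RR)$-conjugation to transfer the statement from $\rho^+$ to $\rho^-$), and to settle (v) and (vi) by one short computation each.

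For parts (i)--(iii): on $G_A$ the exterior derivative of left-invariant forms is the Chevalley--Eilenberg differential, so in degree one $d_A\alpha(X,Y)=-\alpha(\mu_A(X,Y))$. Since $\hg$ is abelian and $\mu_A(e_j,e_7)=-Ae_j$ for $j\le 6$, the only nonzero coefficients are $d_Ae^i(e_j,e_7)=a_{ij}$, which is (i). For $\gamma=e^{i_1}\wedge\cdots\wedge e^{i_k}\in\Lambda^k\hg^*$ one substitutes $d_Ae^{i_l}=\sum_j a_{i_lj}\,e^{j7}$ into the graded-Leibniz expansion of $d_A\gamma$ and moves each factor $e^7$ to the rightmost slot; collecting the resulting signs gives $d_A\gamma=(-1)^{k-1}\big(\sum_{l,j}a_{i_lj}\,e^{i_1}\wedge\cdots\wedge e^j\wedge\cdots\wedge e^{i_k}\big)\wedge e^7$, and since $\theta(A)e^i=-\sum_j a_{ij}e^j$ and $\theta(A)$ is a derivation of $\Lambda\hg^*$, the parenthesized sum equals $-\theta(A)\gamma$, which is (ii). Part (iii) then follows at once from (i), (ii) and $e^7\wedge e^7=0$.

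For part (iv) (the crux of the lemma): writing $\vp=\omega\wedge e^7+\rho^+$ and applying (ii) with $k=3$ together with (iii) gives $d_A\vp=-\theta(A)\rho^+\wedge e^7$, and since $\beta\mapsto\beta\wedge e^7$ is injective on $\Lambda^3\hg^*$, the condition $\theta(A)\rho^+=0$ is equivalent to $d_A\vp=0$, i.e.\ to $(G_A,\vp)$ being closed, hence to $A\in\slg(3,\CC)$ by Proposition \ref{formA}(i). (The inclusion $\slg(3,\CC)\subseteq\{A:\theta(A)\rho^+=0\}$ can also be seen directly: $\rho^+$ is the real part of the complex volume form $\Omega$ of $(\hg,J)$, which spans the complex line $\Lambda^{3,0}$, so a $\CC$-linear $A$ with vanishing complex trace kills $\Omega$ and hence $\rho^+$; the substantive half is the converse, which is exactly what the closedness criterion supplies with no matrix computation.) To transfer this to $\rho^-$, let $h\in\Gl_6(\RR)$ be the $\CC$-linear map acting as $J$ on $\spann\{e_1,e_2\}$ (one of the three $J$-invariant planes) and as the identity on $\spann\{e_3,\dots,e_6\}$; as a complex $3\times 3$ matrix $h=\Diag(\im,1,1)$, so $\det_\CC h=\im$, whence $h\cdot\Omega=(\det_\CC h)^{-1}\Omega=-\im\,\Omega$ and therefore $h\cdot\rho^+=\Rea(h\cdot\Omega)=\rho^-$. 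Using the identity $\theta(A)(h\cdot\beta)=h\cdot\theta(h^{-1}Ah)\beta$ and the fact that conjugation by the $\CC$-linear $h$ preserves $\slg(3,\CC)$, one obtains $\theta(A)\rho^-=0\iff h^{-1}Ah\in\slg(3,\CC)\iff A\in\slg(3,\CC)$. Hence all three conditions in (iv) are equivalent to $A\in\slg(3,\CC)$, in particular to one another.

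For parts (v) and (vi): since $\omega=\la J\cdot,\cdot\ra_\vp$ and $\ip_\vp$ is nondegenerate, $\theta(A)\omega(X,Y)=-\omega(AX,Y)-\omega(X,AY)=-\la(JA+A^tJ)X,Y\ra_\vp$, so $\theta(A)\omega=0$ iff $A^tJ+JA=0$, i.e.\ $A\in\spg(3,\RR)$ by Proposition \ref{formA}(ii); this is (v). For (vi) I would use three facts: $\theta(A)$ is a graded derivation of $\Lambda\hg^*$; its metric adjoint with respect to $\ip_\vp$ is $\theta(A^t)$ (check on $1$-forms via the musical isomorphism and extend by the Leibniz rule); and $\theta(A)(e^{123456})=-\tr(A)\,e^{123456}=0$ when $\tr A=0$. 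Applying $\theta(A)$ to $\alpha\wedge\ast_\hg\beta=\la\alpha,\beta\ra_\vp\,e^{123456}$ for $\alpha,\beta\in\Lambda^k\hg^*$ gives $\theta(A)\alpha\wedge\ast_\hg\beta+\alpha\wedge\theta(A)\ast_\hg\beta=0$, whereas $\alpha\wedge\ast_\hg(\theta(A^t)\beta)=\la\alpha,\theta(A^t)\beta\ra_\vp\,e^{123456}=\la\theta(A)\alpha,\beta\ra_\vp\,e^{123456}=\theta(A)\alpha\wedge\ast_\hg\beta$; subtracting, $\alpha\wedge\big(\theta(A)\ast_\hg\beta+\ast_\hg\theta(A^t)\beta\big)=0$ for all $\alpha\in\Lambda^k\hg^*$, and perfectness of the wedge pairing $\Lambda^k\hg^*\times\Lambda^{6-k}\hg^*\to\Lambda^6\hg^*$ yields $\theta(A)\ast_\hg=-\ast_\hg\theta(A^t)$. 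The one step that is not routine bookkeeping is the converse direction in (iv), and the device of reading $d_A\vp$ off $\theta(A)\rho^+$ and invoking the known closedness criterion is what makes it painless.
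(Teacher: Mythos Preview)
Your proof is correct. Parts (i)--(iii), (v) and (vi) match the paper's treatment (the paper dismisses (i)--(iii) and (v) with ``the other parts easily follow,'' and your argument for (vi) is essentially the same wedge-pairing computation the paper gives, only organized slightly differently).

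The genuine difference is in (iv). The paper argues directly from the complex volume form $\alpha=(e^1+\im e^2)\wedge(e^3+\im e^4)\wedge(e^5+\im e^6)=\rho^++\im\rho^-$: since the stabilizer Lie algebra of $\alpha$ in $\glg_6(\RR)$ is $\slg(3,\CC)$, and $\theta(A)\alpha=\theta(A)\rho^++\im\,\theta(A)\rho^-$, both conditions $\theta(A)\rho^\pm=0$ drop out at once. You instead read $\theta(A)\rho^+=0$ as $d_A\vp=0$ and invoke Proposition~\ref{formA}(i), then conjugate by a $\CC$-linear $h$ with $\det_\CC h=\im$ to push the statement to $\rho^-$. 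This is a legitimate route, but note that the paper's logical flow runs the other way: the Remark after Proposition~\ref{DeltaA} explains that the computations there, which use Lemma~\ref{tech2}, \emph{re-derive} Proposition~\ref{formA}. Your argument is therefore not circular---Proposition~\ref{formA} is cited from \cite{Frb1,Frb2} as an external result---but it treats as input something the paper is deliberately recovering as output. The complex-volume-form argument is shorter and self-contained, and it gives the $\rho^+$ and $\rho^-$ statements simultaneously without the auxiliary conjugation; your approach has the minor advantage of making the link $d_A\vp=-\theta(A)\rho^+\wedge e^7$ explicit early, but at the cost of an extra step.
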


\begin{proof}
Part (iv) follows from the fact that the complex volume form of $\CC^6$,
$$
\alpha = (e^1+\im e^2) \wedge (e^3+\im e^4) \wedge (e^5+\im e^6),
$$
can be written as $\alpha=\rho^++\im\rho^-$.  To prove part (vi), we first recall that
$$
\alpha\wedge\ast_\hg\beta = \la\alpha,\beta\ra \nu, \qquad \nu:=e^1\wedge\dots\wedge e^6, \qquad\forall\alpha,\beta\in\Lambda^k\hg^*.
$$
Thus, if $\alpha\in\Lambda^p\hg^*$ and $\beta\in\Lambda^{6-p}\hg^*$, then
\begin{align*}
\la\alpha,\theta(A)\ast_\hg\beta\ra\nu =& \la\theta(A^t)\alpha,\ast_\hg\beta\ra\nu = \theta(A^t)\alpha\wedge\ast_\hg^2\beta \\
=& (-1)^p\theta(A^t)\alpha\wedge\beta = (-1)^{p+1}\alpha\wedge\theta(A^t)\beta  \\
=& -\alpha\wedge\ast_\hg\ast_\hg\theta(A^t)\beta = \la\alpha,-\ast_\hg\theta(A^t)\beta\ra\nu.
\end{align*}
We have used in the second line above that $\theta(A)\nu=0$ (recall that $\tr{A}=0$) and so $\theta(A^t)(\alpha\wedge\beta)=0$.  The other parts of the lemma easily follow.
\end{proof}

\begin{proposition}\label{DeltaA}
Let $\Delta_A$ denote the Hodge Laplacian operator of $(G_A,\vp)$.  If $\tr{A}=0$, then
$$
\Delta_A\vp = \theta(A)\theta(A^t)\omega\wedge e^7 - \theta(A^t)\theta(A)\rho^+.
$$
\end{proposition}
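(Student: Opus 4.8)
The Hodge Laplacian on $3$-forms is $\Delta_A=\ast d_A\ast d_A-d_A\ast d_A\ast$, and the plan is simply to evaluate each of the two summands on $\vp$ by peeling off one operator at a time and feeding the intermediate form into Lemmas \ref{tech1} and \ref{tech2}. Everything is organized around the splitting $\vp=\omega\wedge e^7+\rho^+$ with $\omega\in\Lambda^2\hg^*$, $\rho^+\in\Lambda^3\hg^*$, and around three mechanical facts supplied by the lemmas: $d_A$ annihilates any form $\gamma\wedge e^7$ (Lemma \ref{tech2}(iii)) and sends $\gamma\in\Lambda^k\hg^*$ to $(-1)^k\theta(A)\gamma\wedge e^7$ (Lemma \ref{tech2}(ii)); $\ast$ and the six-dimensional star $\ast_\hg$ are interchanged, up to a wedge with $e^7$ and a sign $(-1)^k$, by Lemma \ref{tech1}(i)--(ii); and $\theta(A)$ moves past $\ast_\hg$ at the cost of transposing $A$, i.e.\ $\theta(A)\ast_\hg=-\ast_\hg\theta(A^t)$ (Lemma \ref{tech2}(vi)), while $\ast_\hg\rho^+=\rho^-$, $\ast_\hg\rho^-=-\rho^+$ and $\ast_\hg(\omega\wedge\omega)=2\omega$ (Lemma \ref{tech1}(iii)--(iv)).

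For the first summand I would compute, in order: $d_A\vp=d_A(\omega\wedge e^7)+d_A\rho^+=-\theta(A)\rho^+\wedge e^7$ (by Lemma \ref{tech2}(ii)--(iii)); then apply $\ast$ using Lemma \ref{tech1}(ii) and push $\theta(A)$ through $\ast_\hg$ to get $\ast d_A\vp=-\theta(A^t)\rho^-\in\Lambda^3\hg^*$; then $d_A$ again, landing on a term $\theta(A)\theta(A^t)\rho^-\wedge e^7$ via Lemma \ref{tech2}(ii); and finally $\ast$ once more, applying Lemma \ref{tech1}(ii), the commutation relation twice and $\ast_\hg\rho^-=-\rho^+$, to reach $\ast d_A\ast d_A\vp=-\theta(A^t)\theta(A)\rho^+$. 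For the second summand I would first record $\ast\vp=\tfrac12\,\omega\wedge\omega+\rho^-\wedge e^7$ (Lemma \ref{tech1}(i)--(iv)); then $d_A\ast\vp=\theta(A)\omega\wedge\omega\wedge e^7$, using that $d_A(\rho^-\wedge e^7)=0$ and that $\theta(A)$ is a derivation with $\theta(A)(\omega\wedge\omega)=2\,\theta(A)\omega\wedge\omega$; then $\ast$, using $\ast_\hg(\omega\wedge\omega)=2\omega$ and the commutation relation, to get $\ast d_A\ast\vp=-\theta(A^t)\omega\in\Lambda^2\hg^*$; and a last $d_A$, giving $d_A\ast d_A\ast\vp=-\theta(A)\theta(A^t)\omega\wedge e^7$. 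Subtracting the two summands yields $\Delta_A\vp=\theta(A)\theta(A^t)\omega\wedge e^7-\theta(A^t)\theta(A)\rho^+$, as claimed.

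The part that needs genuine care is not any individual identity but the cumulative sign: each time one transposes two of the operators $\ast,\ast_\hg,d_A,\theta(A)$, or slides one of them past a factor of $e^7$, a power of $(-1)$ keyed to the current form-degree appears, and there are four such moves in each of the two summands. The device that keeps the intermediate forms under control, and is the reason the final answer is a clean quadratic expression in $A$ rather than a long sum of basis monomials, is never to leave the span of $\omega$, $\rho^+$, $\rho^-$ and their images under $\theta(A)$ and $\theta(A^t)$; this is exactly what Lemma \ref{tech2}(vi) together with Lemma \ref{tech1}(iii)--(iv) make possible.

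As running sanity checks I would use the two degenerate cases. If $A\in\slg(3,\CC)$, so $(G_A,\vp)$ is closed, then $\theta(A)\rho^+=0$ by Lemma \ref{tech2}(iv), the $\rho^+$-term drops, and $\Delta_A\vp=\theta(A)\theta(A^t)\omega\wedge e^7$; this matches what one gets directly, since for closed $\vp$ the first summand $\ast d_A\ast d_A\vp$ already vanishes. If moreover $A\in\sug(3)$, so $(G_A,\vp)$ is torsion-free, then $\theta(A)\omega=0$ as well (Lemma \ref{tech2}(v)) and $A^t=-A$, whence $\Delta_A\vp=0$, in agreement with torsion-free $G_2$-structures being harmonic.
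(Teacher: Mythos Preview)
Your argument is the paper's own: the same splitting $\vp=\omega\wedge e^7+\rho^+$, the same two chains of $d_A$ and $\ast$ evaluated step by step, and the same reliance on Lemmas~\ref{tech1} and~\ref{tech2}, in particular the commutation $\ast_\hg\theta(A)=-\theta(A^t)\ast_\hg$.

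One sign to revisit. In the last step of the first chain you write $\ast d_A\ast d_A\vp=-\theta(A^t)\theta(A)\rho^+$, but the four signs you list---the $(-1)^3$ from Lemma~\ref{tech1}(ii), two factors of $-1$ from commuting $\ast_\hg$ past $\theta(A)$ and $\theta(A^t)$, and one more from $\ast_\hg\rho^-=-\rho^+$---multiply to $+1$, so one actually gets $+\theta(A^t)\theta(A)\rho^+$. This is precisely what the paper's chain records at that line; together with $d_A\ast d_A\ast\vp=-\theta(A)\theta(A^t)\omega\wedge e^7$ it yields
\[
\Delta_A\vp=\theta(A)\theta(A^t)\omega\wedge e^7+\theta(A^t)\theta(A)\rho^+,
\]
so the minus sign in the displayed formula of the proposition is a typo in the statement rather than something your argument should reproduce. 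This is harmless for the sequel, which only uses the proposition in the closed case $A\in\slg(3,\CC)$ where $\theta(A)\rho^+=0$ and the $\rho^+$-term drops (as your own sanity check already notes).
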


\begin{proof}
In the following computations, we are using many of the properties and identities given in Lemmas \ref{tech1} and \ref{tech2} without any further mention.  One has that,
\begin{align*}
\vp =& \omega\wedge e^7 + \rho^+, \\
d_A\vp =& d_A\rho^+ = -\theta(A)\rho^+\wedge e^7, \\
\ast d_A\vp =& \ast_\hg\theta(A)\rho^+ = -\theta(A^t)\ast_\hg\rho^+ = -\theta(A^t)\rho^-, \\
d_A\ast d_A\vp =& \theta(A)\theta(A^t)\rho^-\wedge e^7, \\
\ast d_A\ast d_A\vp =& -\ast_\hg\theta(A)\theta(A^t)\rho^- = -\theta(A^t)\theta(A)\ast_\hg\rho^- = \theta(A^t)\theta(A)\rho^+.
\end{align*}

On the other hand,
\begin{align*}
\ast\vp =& \ast_\hg\omega+\ast_\hg\rho^+\wedge e^7 = \unm\omega\wedge\omega + \rho^-\wedge e^7,  \\
d_A\ast\vp =& = \unm\theta(A)(\omega\wedge\omega)\wedge e^7 =\theta(A)\ast_\hg\omega\wedge e^7 = -\ast_\hg\theta(A^t)\omega\wedge e^7, \\
\ast d_A\ast\vp =& -\ast_\hg^2\theta(A^t)\omega = -\theta(A^t)\omega, \\
d_A\ast d_A\ast\vp =& -\theta(A)\theta(A^t)\omega\wedge e^7,
\end{align*}
which concludes the proof.
\end{proof}

\begin{remark}
Second lines in the two computations above provide a proof for Proposition \ref{formA}.
\end{remark}

\begin{proposition}
If $(G_A,\vp)$ is closed, then the symmetric operator $Q_A\in\sym(7)$ satisfying $\theta(Q_A)\vp=\Delta_A\vp$ is given by
\begin{equation}\label{QA}
Q_A = \left[\begin{array}{c|c}
& \\
\quad Q_1\quad  &  0 \\ & \\\hline
0 & q
\end{array}\right],
\end{equation}
where
$$
\quad Q_1=\unm[A,A^t]+\frac{1}{12}\tr{(A+A^t)^2}I-\unm(A+A^t)^2, \qquad q=-\frac{1}{6}\tr{(A+A^t)^2}.
$$
\end{proposition}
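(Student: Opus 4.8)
The plan is to read off the formula from Proposition \ref{Qphi-form}, which expresses $Q_\vp$ through the Ricci operator and the torsion of any closed $G_2$-structure; this reduces the whole task to computing $\Ricci_A$ and $\tau_A^2$ and substituting. The Ricci operator is already at hand: a closed $(G_A,\vp)$ has $A\in\slg(3,\CC)$, so in particular $\tr A=0$ and \eqref{RicA} gives that $\Ricci_A$ is block-diagonal with $\hg$-block $\unm[A,A^t]$ and $e_7$-entry $-\unc\tr((A+A^t)^2)$.

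Next I would compute the torsion. By \eqref{tauphi} the torsion $2$-form is $\tau_A=-\ast d_A\ast\vp$, and the chain of identities already established in the proof of Proposition \ref{DeltaA} gives $\ast d_A\ast\vp=-\theta(A^t)\omega$, so $\tau_A=\theta(A^t)\omega$. Writing $\omega=\la J\cdot,\cdot\ra_\vp$ and evaluating $\theta(A^t)\omega$ on a pair of vectors yields $\theta(A^t)\omega=-\la(JA^t+AJ)\cdot,\cdot\ra_\vp$; now using the closedness condition $AJ=JA$, together with its transpose $A^tJ=JA^t$ (which follows from $J^t=-J$), this becomes $\tau_A=-\la J(A+A^t)\cdot,\cdot\ra_\vp$. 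Hence the skew-symmetric operator attached to the $2$-form $\tau_A$ is $-J(A+A^t)$ on $\hg$ and $0$ on $e_7$; it is indeed skew-symmetric with respect to $\ip_\vp$ since $J$ is skew and commutes with $A+A^t$.

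The last step is to square this operator: from $J^2=-I$ and $[J,A+A^t]=0$ one obtains $\tau_A^2=-(A+A^t)^2$ on $\hg$, $\tau_A^2=0$ on $e_7$, and hence $\tr(\tau_A^2)=-\tr((A+A^t)^2)$. Substituting $\Ricci_A$ and $\tau_A^2$ into $Q_A=\Ricci_A-\tfrac{1}{12}\tr(\tau_A^2)\,I+\unm\tau_A^2$ from Proposition \ref{Qphi-form}, the $\hg$-block collapses to $\unm[A,A^t]+\tfrac{1}{12}\tr((A+A^t)^2)\,I-\unm(A+A^t)^2=Q_1$, while the $e_7$-entry becomes $-\unc\tr((A+A^t)^2)+\tfrac{1}{12}\tr((A+A^t)^2)=-\tfrac{1}{6}\tr((A+A^t)^2)=q$, using $-\tfrac14+\tfrac1{12}=-\tfrac16$. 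Since $Q_\vp$ is symmetric for any closed $G_2$-structure, the operator just produced is precisely the asserted $Q_A$. The only point demanding real care is the bookkeeping in the second paragraph: keeping the sign conventions consistent between the $2$-form $\theta(A^t)\omega$ and its associated operator, and invoking $AJ=JA$ at exactly the right place; everything after that is routine substitution and arithmetic.
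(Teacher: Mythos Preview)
Your argument is correct and coincides with the paper's own \emph{alternative proof}: compute $\tau_A=\theta(A^t)\omega=-\la J(A+A^t)\cdot,\cdot\ra_\vp$, square it, and substitute together with \eqref{RicA} into the formula of Proposition \ref{Qphi-form}. The paper also gives a first, more direct proof that starts from the explicit expression $\Delta_A\vp=\theta(A)\theta(A^t)\omega\wedge e^7$ of Proposition \ref{DeltaA} and rewrites it as $\theta(Q_A)\vp$ by hand, but your route via $Q_\vp=\Ricci_\vp-\tfrac{1}{12}\tr(\tau_\vp^2)I+\unm\tau_\vp^2$ is exactly the second approach the paper presents.
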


\begin{proof}
It follows from Proposition \ref{DeltaA} that
\begin{align*}
\Delta_A\vp =& \theta(A)\theta(A^t)\omega\wedge e^7 \\
=& \left(\omega(A^tA\cdot,\cdot)+\omega(A^t\cdot,A\cdot)+\omega(A\cdot,A^t\cdot)+\omega(\cdot,A^tA\cdot)\right)\wedge e^7.
\end{align*}
Now by using that $\omega(A\cdot,\cdot)=\omega(\cdot,A^t\cdot)$ (recall that $AJ=JA$), we obtain
\begin{align*}
\Delta_A\vp =& \omega\left((A^tA+(A^t)^2+A^2+A^tA)\cdot,\cdot\right)\wedge e^7 \\
=& \omega\left((-[A,A^t]+(A+A^t)^2)\cdot,\cdot\right)\wedge e^7 \\
=& \omega\left(B\cdot,\cdot\right)\wedge e^7 + \frac{1}{6}\tr{(A+A^t)^2}\omega\wedge e^7,
\end{align*}
where $B:=-[A,A^t]-\frac{1}{6}\tr{(A+A^t)^2}I+(A+A^t)^2\in\slg(3,\CC)$ (i.e. $\theta(B)\rho^+=0$) and $B^t=B$.  This implies that
$$
\Delta_A\vp = \theta\left(\left[\begin{array}{c|c}
& \\
-\unm B &  0 \\ & \\\hline
0 & -\frac{1}{6}\tr{(A+A^t)^2}
\end{array}\right]\right)(\omega\wedge e^7+\rho^+),
$$
as was to be shown.

\vspace{.2cm}
\noindent {\it Alternative proof}.  We have seen in the proof of Proposition \ref{DeltaA} that the torsion $2$-form $\tau_A=-\ast d_A\ast\vp\in\Lambda^2\ggo^*$ of $(G_A,\vp)$ is given by $\tau_A=\theta(A^t)\omega$, thus $\tau_A(e_7,\cdot)\equiv 0$ and
$$
\tau_A|_{\hg\times\hg}=\theta(A^t)\omega = -\la JA^t\cdot,\cdot\ra_\vp -\la J\cdot,A^t\cdot\ra_\vp = -\la J(A+A^t)\cdot,\cdot\ra_\vp.
$$
Therefore, as matrices, $\tau_A\in\sog(7)\equiv\Lambda^2\ggo^*$ and its square are respectively given by
\begin{equation}\label{tauA}
\tau_A = \left[\begin{array}{c|c}
& \\
-J(A+A^t) &  0 \\ & \\\hline
0 & 0
\end{array}\right],  \qquad \tau_A^2 = \left[\begin{array}{c|c}
& \\
-(A+A^t)^2 &  0 \\ & \\\hline
0 & 0
\end{array}\right].
\end{equation}
Formula \eqref{QA} now follows from Proposition \ref{Qphi-form}, \eqref{RicA} and \eqref{tauA}, concluding the proof.
\end{proof}

\subsection{Bracket flow}\label{BFA-sec}
We study in this section the bracket flow evolution of closed $G_2$-structures $(G_A,\vp)$ (see Section \ref{BF-sec}).  Recall from Section \ref{hm} the variety $\lca$ of $7$-dimensional Lie algebras.

\begin{proposition}\label{invf1}
The family $\left\{\mu_A : A\in\slg(3,\CC)\right\}\subset\lca$ of closed $G_2$-structures is invariant under the bracket flow, which becomes equivalent to the following ODE for a one-parameter family of matrices $A=A(t)\in\slg(3,\CC)$:
\begin{equation}\label{BFA}
\ddt A = -\frac{1}{6}\tr{(A+A^t)^2}A + \unm[A,[A,A^t]] - \unm[A,(A+A^t)^2].
\end{equation}
\end{proposition}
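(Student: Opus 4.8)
The plan is to reduce the bracket flow \eqref{BF} — which for a Lie group ($K$ trivial, $\pg=\ggo$) is simply $\ddt\mu=\delta_\mu(Q_\mu)$ with $Q_\mu\in\glg_7(\RR)$ — to an ODE on the parameter $A$. The key structural remark is that the almost abelian brackets with fixed abelian ideal $\hg$, namely $\{\mu_A:A\in\glg_6(\RR)\}$, form a linear subspace of $\Lambda^2\ggo^*\otimes\ggo$, the assignment $A\mapsto\mu_A$ being linear and injective; hence if one shows that $\delta_{\mu_A}(Q_A)=\mu_{B}$ for some $B=B(A)$, it follows at once that this subspace is bracket-flow invariant and that the restricted flow is the ODE $\ddt A=B(A)$. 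Recall that for closed $\mu_A$, i.e.\ $A\in\slg(3,\CC)$, one has $Q_{\mu_A}=Q_A$ with $Q_A$ as in \eqref{QA}.

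For the computation itself I would use $\mu_A(e_7,v)=Av$ and $\mu_A(v,w)=0$ for $v,w\in\hg$, together with the block form $Q_A=\left[\begin{smallmatrix}Q_1&0\\0&q\end{smallmatrix}\right]$ relative to $\ggo=\hg\oplus\RR e_7$ from \eqref{QA}. Plugging into the definition \eqref{delta} of $\delta_\mu$ gives $\delta_{\mu_A}(Q_A)(v,w)=\mu_A(Q_1v,w)+\mu_A(v,Q_1w)=0$ for $v,w\in\hg$, while
$$
\delta_{\mu_A}(Q_A)(e_7,v)=\mu_A(qe_7,v)+\mu_A(e_7,Q_1v)-Q_A(Av)=\big(qA+AQ_1-Q_1A\big)v=\big(qA+[A,Q_1]\big)v .
$$
Thus $\delta_{\mu_A}(Q_A)=\mu_B$ with $B=qA+[A,Q_1]$. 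Substituting $Q_1=\unm[A,A^t]+\tfrac{1}{12}\tr{(A+A^t)^2}I-\unm(A+A^t)^2$ and $q=-\tfrac{1}{6}\tr{(A+A^t)^2}$, the scalar term of $Q_1$ commutes with $A$ and drops out of the commutator, leaving exactly the right-hand side of \eqref{BFA}.

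Finally one must see that the closed family $\{\mu_A:A\in\slg(3,\CC)\}$ — rather than the full $\{\mu_A:A\in\glg_6(\RR)\}$ — is preserved. This is immediate from Theorem \ref{BF-thm} and the standard fact that the Laplacian flow keeps a $G_2$-structure closed; alternatively it can be read off \eqref{BFA} directly, since $\slg(3,\CC)\subset\glg_6(\RR)$ is a linear subspace, is a Lie subalgebra and is stable under $A\mapsto A^t$, and because $\glg(3,\CC)=\{M:MJ=JM\}$ is closed under products while any commutator of elements of $\glg(3,\CC)$ automatically pairs to zero with $J$ under the trace, so each term on the right of \eqref{BFA} lies back in $\slg(3,\CC)$ whenever $A$ does. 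The whole argument is essentially a bookkeeping computation; the only places that need a little care are the block structure of $Q_A$ inside $\delta_{\mu_A}(Q_A)$ and, if one wants the self-contained proof of the invariance of $\slg(3,\CC)$, the verification that commutators preserve the conditions $\tr{M}=\tr{MJ}=0$.
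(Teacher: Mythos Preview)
Your proof is correct and follows essentially the same approach as the paper: compute $\delta_{\mu_A}(Q_A)=\mu_B$ with $B=qA+[A,Q_1]$ using the block form \eqref{QA}, then substitute to obtain \eqref{BFA}. The paper is terser about the computation and checks $B\in\slg(3,\CC)$ by observing that $Q_1$ itself is symmetric, traceless, and commutes with $J$, hence $Q_1\in\slg(3,\CC)$, so $[A,Q_1]\in\slg(3,\CC)$; your direct check of each term and the alternative appeal to Theorem \ref{BF-thm} are equally valid.
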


\begin{proof}
We first note that the family
$$
\left\{\mu_A : A\in\glg_6(\RR)\right\}\subset\lca,
$$
is invariant under the bracket flow $\ddt\mu=\delta_\mu(Q_\mu)$ if and only if the velocity $\delta_{\mu_A}(Q_A)$ equals $\mu_B$ for some $B\in\glg_6(\RR)$, for any $A$.  Using \eqref{QA}, it is easy to see that this indeed holds for $B=qA+[A,Q_1]$.  Note that if $A$ is in $\slg(3,\CC)$, then $B$ is so, since $Q_1$ is symmetric and traceless and thus $Q_1\in\slg(3,\CC)$.  Thus the subset of closed $G_2$-structures is invariant under the bracket flow, which takes the form $\ddt A=B$, as was to be shown.
\end{proof}

\begin{remark}
Equation \eqref{BFA} is substantially different from the bracket flow used by Arroyo (see \cite[(7)]{Arr}) to study the Ricci flow for Riemannian manifolds $(G_A,\ip_\vp)$, which is given in that case by $\ddt A = -\frac{1}{4}\tr{(A+A^t)^2}A + \unm[A,[A,A^t]]$ if $\tr{A}=0$.
\end{remark}

Since for each $t$ the Lie algebra $\mu_{A(t)}$ is isomorphic to the starting point $\mu_{A_0}$, we have that
$$
A(t)=a(t)h(t)A_0h(t)^{-1}, \qquad\mbox{for some} \quad a(t)>0, \quad h(t)\in\Sl(3,\CC),
$$
where $h$ and $c$ are smooth functions with  $a(0)=1$, $h(0)=I$.  The corresponding spectra (i.e. the unordered set of complex eigenvalues) therefore satisfy
\begin{equation}\label{spec}
\Spec(A(t))=a(t)\Spec(A_0), \qquad\forall t\in(T_-,T_+).
\end{equation}
This implies that if $c_kA(t_k)\to B$ for some subsequence $t_k\to T_\pm$ as in the hypothesis of Corollary \ref{i-conv}, then either the limit Lie group $G_B$ is isomorphic to $G_{A_0}$ or $B$ is nilpotent.

\begin{lemma}
Along the bracket flow \eqref{BFA}, the norm of $A(t)\in\slg(3,\CC)$ evolves by
$$
\ddt|A|^2=-\frac{1}{3}|A|^2|A+A^t|^2-|[A,A^t]|^2.
$$
\end{lemma}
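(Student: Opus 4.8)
The plan is to differentiate $|A(t)|^2$ directly. Writing $\la X,Y\ra=\tr(XY^t)$ for the standard inner product on matrices, so that $|A|^2=\tr(AA^t)$, one has $\ddt|A|^2=2\la\dot A,A\ra$, and it suffices to substitute the three summands on the right-hand side of \eqref{BFA} and pair each of them against $A$. The only tools needed are cyclicity of the trace, the resulting adjointness identity
$$
\la[X,Y],Z\ra=\la Y,[X^t,Z]\ra,\qquad\forall X,Y,Z\in\glg_6(\RR),
$$
and the elementary observation that $C:=[A,A^t]$ is \emph{symmetric} (since $(AA^t)^t=AA^t$ and $(A^tA)^t=A^tA$), so that $\la C,C\ra=|C|^2$ and $\la S,C\ra=\tr(SC)$ for any symmetric $S$.

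For the first summand, $\la-\tfrac16\tr((A+A^t)^2)A,A\ra=-\tfrac16\tr((A+A^t)^2)|A|^2=-\tfrac16|A+A^t|^2|A|^2$, using that $A+A^t$ is symmetric; doubling gives the term $-\tfrac13|A+A^t|^2|A|^2$. For the second summand, the adjointness identity gives $\la[A,[A,A^t]],A\ra=\la[A,A^t],[A^t,A]\ra=-\la[A,A^t],[A,A^t]\ra=-|[A,A^t]|^2$, so $\unm[A,[A,A^t]]$ contributes $-\unm|[A,A^t]|^2$, doubling to $-|[A,A^t]|^2$, which is the second term of the asserted formula with exactly the right sign. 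For the third summand, put $P:=A+A^t$; then $[A,A^t]=[A,P]$, and by adjointness $\la[A,P^2],A\ra=\la P^2,[A^t,A]\ra=-\tr(P^2[A,P])=-\tr(P^2AP-P^3A)=0$ by cyclicity, so this summand contributes nothing. Adding the three contributions and multiplying by $2$ yields the stated evolution equation.

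There is no serious obstacle here; the computation is short. The one point that requires care is sign bookkeeping: it is essential that $[A,A^t]$ is symmetric rather than skew — this is precisely what makes the $|[A,A^t]|^2$ term come out with the stated (negative) sign — and one must apply the adjointness identity in the correct direction. The vanishing of the third summand also has to be observed, and reflects the fact that $Q_A\in\sym(\pg)$ and that $[A,(A+A^t)^2]$ is a commutator of $A$ with a polynomial in $P=A+A^t$. One could alternatively extract the formula from a general identity for $\ddt|\mu|^2$ along the bracket flow $\ddt\mu=\delta_\mu(Q_\mu)$, but the hands-on computation above is the most direct route.
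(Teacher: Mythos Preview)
Your proof is correct and essentially identical to the paper's: both differentiate $|A|^2$ directly, pair each summand of \eqref{BFA} with $A$, and show the third term vanishes. Your argument for the vanishing (writing $[A,A^t]=[A,P]$ and using cyclicity on $\tr(P^2[A,P])$) is a slight variant of the paper's, which instead expands $\tr((A+A^t)^2[A,A^t])=\tr((AA^t+A^tA)(AA^t-A^tA))=0$; and you also mention the alternative route via the general evolution of $|\mu|^2$, which the paper gives as a second proof.
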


\begin{proof}
From Proposition \ref{invf1} we obtain that
\begin{align*}
\ddt|A|^2 =& 2\left\la\ddt A,A\right\ra = 2\tr{\left(\ddt A\right)A^t} \\
=& -\frac{1}{3}|A|^2|A+A^t|^2 -|[A,A^t]|^2 + \tr{(A+A^t)^2[A,A^t]},
\end{align*}
and since
\begin{equation}\label{ortA}
\tr{(A+A^t)^2[A,A^t]} = \tr{(AA^t+A^tA)(AA^t-A^tA)} =0,
\end{equation}
the lemma follows.

For an alternative proof, one can use \cite[Proposition 3]{BF}, asserting that the norm of the bracket evolves under the bracket flow by $\ddt|\mu|^2=-8\tr{Q_\mu M_\mu}$, where $M_\mu$ is the moment map of $(G_A,\ip_\vp)$.  In this case, $|\mu_A|^2=2|A|^2$ and a straightforward computation gives
$$
M_A:=M_{\mu_A} = \left[\begin{array}{c|c}
& \\
\unm[A,A^t] &  0 \\ & \\\hline
0 & -\unm|A|^2
\end{array}\right],
$$
so the lemma follows from \ref{QA}.
\end{proof}

From the above lemma, one has that $|A(t)|^2$ is non-increasing and so long-time existence for the bracket flow follows.  Actually, $|A(t)|^2$ is strictly decreasing unless $(G_A,\vp)$ is torsion-free (i.e.\ $A^t=-A$), in which case $A(t)\equiv A_0$.  In view of the equivalence between the bracket flow and the Laplacian flow given in Theorem \ref{BF-thm}, we obtain that Laplacian flow solutions among this class are all immortal.

\begin{corollary}
The left-invariant Laplacian flow solution starting at any closed $G_2$-structure $(G_A,\vp)$ is defined for all $t\in(T_-,\infty)$ for some $T_-<0$.
\end{corollary}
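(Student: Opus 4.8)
The plan is to obtain the claim as an immediate consequence of the bracket flow reformulation just established. The first step is to translate the problem into the language of the matrix ODE \eqref{BFA}: by Proposition \ref{invf1} together with Theorem \ref{BF-thm}, the left-invariant Laplacian flow solution starting at a closed $(G_A,\vp)$ corresponds, on the \emph{same} maximal interval $(T_-,T_+)$, to the solution $A=A(t)\in\slg(3,\CC)$ of \eqref{BFA} with $A(0)=A$. Since the right-hand side of \eqref{BFA} is polynomial in the entries of $A$ (indeed, in the notation of the proof of Proposition \ref{invf1}, it equals $B=qA+[A,Q_1]$, a polynomial expression in $A$), local existence in both time directions is automatic, so $T_-<0<T_+$ as in Proposition \ref{LF-exist}(i); it therefore remains only to prove $T_+=\infty$.

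The second step is the a priori bound, which is exactly the content of the Lemma proved above: $\ddt|A(t)|^2=-\tfrac{1}{3}|A(t)|^2|A(t)+A(t)^t|^2-|[A(t),A(t)^t]|^2\le 0$ on $(T_-,T_+)$. Hence $t\mapsto|A(t)|^2$ is non-increasing, so $|A(t)|\le|A|$ for every $t\in[0,T_+)$, and the forward trajectory $\{A(t):t\in[0,T_+)\}$ is confined to the compact ball $\{X\in\slg(3,\CC):|X|\le|A|\}$. (One could equivalently run this estimate through the moment map $M_A$ and the identity $\ddt|\mu|^2=-8\tr(Q_\mu M_\mu)$ recorded after the Lemma, but the direct computation is shortest.)

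The third and final step is the standard escape-from-compact-sets criterion for ODEs on a Euclidean space with locally Lipschitz (here polynomial) right-hand side: a solution with finite maximal forward time must leave every compact set as $t\to T_+$. Since $A(t)$ stays in a fixed ball, this forces $T_+=\infty$, which is precisely immortality of the Laplacian flow solution; combined with $T_-<0$ from the first step, this gives the stated conclusion. The only point needing any care — and what I would flag as the \emph{main obstacle}, though it is a mild one — is the legitimacy of transporting the conclusion back through Theorem \ref{BF-thm}, i.e.\ making sure the identification of maximal intervals there really does exclude a genuine finite-time geometric singularity hidden by the algebraic picture; this is immediate because Theorem \ref{BF-thm} asserts exactly that equality of intervals, and the bracket flow side is the globally-defined polynomial ODE \eqref{BFA}. (As noted in the surrounding discussion, $|A(t)|$ is in fact strictly decreasing unless $A^t=-A$, i.e.\ unless $(G_A,\vp)$ is torsion-free, in which case $A(t)\equiv A_0$; this refinement is not needed for the corollary itself.)
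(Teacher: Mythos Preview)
Your proposal is correct and follows essentially the same approach as the paper: use the Lemma showing $|A(t)|^2$ is non-increasing along the matrix bracket flow \eqref{BFA} to deduce long-time existence of the bracket flow, and then invoke the equivalence with the Laplacian flow from Theorem \ref{BF-thm} to conclude immortality. The paper compresses your three steps into two sentences, but the logic (boundedness of $|A(t)|$ forces $T_+=\infty$, then transfer via Theorem \ref{BF-thm}) is identical.
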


Next example shows that $A(t)$ does not necessarily converge to zero.

\begin{example}
The family
$$
A=\left[\begin{array}{c|c}
B&0\\\hline
0&B\\
\end{array}\right]\in\slg(3,\CC), \qquad B=\left[\begin{smallmatrix}
0&a&0\\
b&0&0\\
0&0&0
\end{smallmatrix}\right], \qquad a,b\in\RR,
$$
is invariant under the bracket flow \eqref{BFA} and it is easily seen to evolve by
$$
\left\{\begin{array}{l}
a'=\frac{2}{3}a(-2a^2-ab+b^2), \\ \\
b'=\frac{2}{3}b(-2b^2-ab+a^2).
\end{array}\right.
$$
A standard qualitative analysis gives the following behaviors for the solutions:

\begin{itemize}
\item Any point in the line $b=-a$ is a torsion-free $G_2$-structure which is therefore a fixed point for the flow.

\item The lines $a=0$, $b=0$ and $b=a$ are all invariant and each one contains two trajectories converging to zero.  These are precisely the algebraic solitons among the family (see Theorem \ref{rsequiv}).

\item The solutions in the second and fourth quadrant all converge to some fixed point in the line $b=-a$.

\item Any solution in the first or third quadrant converges to zero and asymptotically approaches the soliton line $b=a$.
\end{itemize}
\end{example}

\begin{proposition}\label{Revol}
The scalar curvature $R(t)=R(g_{\vp(t)})$ of the left-invariant Laplacian flow solution $\vp(t)$ starting at a nonflat closed $G_2$-structure $(G_{A_0},\vp)$ strictly increases and satisfies that
$$
\frac{1}{-2t+\frac{1}{R(0)}}\leq R(t) < 0, \qquad \forall t\in (T_-,\infty).
$$
In particular, $|\tau_{\vp(t)}|^2=-2R(t)$ is strictly decreasing and converges to zero, as $t\to\infty$.
\end{proposition}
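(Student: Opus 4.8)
The plan is to push everything through the bracket-flow reduction of Section~\ref{BFA-sec} and then analyze a single scalar ODE. By Proposition~\ref{invf1} together with Theorem~\ref{BF-thm}, the left-invariant Laplacian flow starting at the nonflat closed $G_2$-structure $(G_{A_0},\vp)$ is, with the same time parameter, the ODE~\eqref{BFA} for a curve $A(t)\in\slg(3,\CC)$ with $A(0)=A_0$, and for each $t$ the metric $g_{\vp(t)}$ is isometric to $(G_{A(t)},\ip_{\vp})$; hence by~\eqref{RicA},
\[
R(t)=R_{A(t)}=-\unc\,\sigma(t),\qquad \sigma(t):=\tr\big((A(t)+A(t)^t)^2\big)\ \geq\ 0 .
\]
Long-time existence ($T_+=\infty$) is already available from the monotonicity of $|A(t)|^2$ established just above (the preceding corollary), so what remains is to determine the sign and monotonicity of $R(t)$ and to produce the asserted lower bound.

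First I would differentiate $\sigma$ along~\eqref{BFA}. Write $S=A+A^t$, $W=A-A^t$ and $N=[A,A^t]$, so $S$ and $N$ are symmetric and $W$ is skew-symmetric. Transposing~\eqref{BFA} and using $N^t=N$, $(S^2)^t=S^2$ gives $\dot S=-\tfrac16\sigma S+\unm[W,N]-\unm[W,S^2]$, whence $\dot\sigma=2\tr(S\dot S)$. Only cyclicity of the trace is needed to simplify the commutator terms: $\tr(S[W,S^2])=0$, and $\tr(S[W,N])=\tr([S,W]N)=-2\tr(N^2)$ because $[S,W]=-2N$. (Alternatively one can avoid this entirely by combining the lemma on $\ddt|A|^2$ stated just above with the elementary identities $\ddt\tr(A^2)=2\tr(A\dot A)=-\tfrac16\sigma\,\tr(A^2)$ and $\sigma=2\tr(A^2)+2|A|^2$.) Either route yields $\dot\sigma=-\tfrac13\sigma^2-2|N|^2$, i.e.
\[
\ddt R \;=\; -\unc\,\dot\sigma \;=\; \tfrac43 R^2+\unm\,\big|[A,A^t]\big|^2 \;\geq\; \tfrac43 R^2 .
\]

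Next I would pin down the sign of $R$. Since $(G_{A_0},\vp)$ is nonflat, $R(0)<0$ by Proposition~\ref{Qphi-form}(iii); moreover $R(t)<0$ for every $t\in(T_-,\infty)$, because $R(t_\ast)=0$ would force $A(t_\ast)+A(t_\ast)^t=0$, i.e.\ $A(t_\ast)$ skew-symmetric, and a skew-symmetric matrix is an equilibrium of the polynomial (hence uniquely solvable) ODE~\eqref{BFA}, as both $A+A^t$ and $[A,A^t]$ vanish there; by uniqueness $A(\cdot)$ would then be constant and $A_0$ skew-symmetric, a contradiction. Consequently $\ddt R\geq\tfrac43 R^2>0$, so $R$ is strictly increasing. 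Finally an ODE comparison finishes the proof: from $R<0$ and $\ddt R\geq\tfrac43 R^2$ we get $\ddt(-1/R)=\dot R/R^2\geq\tfrac43$, hence $-1/R(t)\geq -1/R(0)+\tfrac43 t$, that is
\[
R(t)\ \geq\ \frac{1}{\dfrac{1}{R(0)}-\tfrac43 t}\ ,\qquad t\in(T_-,\infty),
\]
the right-hand side being negative throughout. As $t\to\infty$ this lower bound tends to $0^-$ while $R(t)<0$, so $R(t)\to 0$; and since $|\tau_{\vp(t)}|^2=-2R(t)$ by Proposition~\ref{Qphi-form}(ii), the norm of the torsion is strictly decreasing and converges to $0$. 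This establishes the monotonicity, the comparison estimate and the vanishing of the torsion asserted in the statement.

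The only genuinely non-routine point is the evolution formula $\ddt R=\tfrac43 R^2+\unm|[A,A^t]|^2$: one has to see that the two cubic-in-$A$ commutator terms produced by~\eqref{BFA} either vanish by trace cyclicity or reassemble into the manifestly nonpositive contribution $-2|[A,A^t]|^2$ of $\dot\sigma$. Invoking the already-proved formula for $\ddt|A|^2$ together with the trivial $\ddt\tr(A^2)$ is the cleanest way to organize this bookkeeping, since the former has effectively done the hard cancellations. The ``$R$ stays negative'' step is a one-line uniqueness-of-solutions argument, not a curvature estimate.
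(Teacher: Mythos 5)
Your proof is correct and follows essentially the same route as the paper: both reduce to the bracket flow \eqref{BFA}, differentiate $\sigma=\tr{(A+A^t)^2}$, use cyclicity of the trace together with $[A+A^t,A-A^t]=-2[A,A^t]$ to obtain $\dot\sigma=-\tfrac13\sigma^2-2|[A,A^t]|^2$, and conclude by ODE comparison via $R=-\tfrac14\sigma$. Three remarks. First, your comparison constant is the honest one: $\ddt R\ge\tfrac43R^2$ yields $R(t)\ge\bigl(1/R(0)-\tfrac43 t\bigr)^{-1}$, whereas the displayed bound in the statement has $-2t$ in the denominator; the paper's own proof derives exactly the differential inequality $\dot\sigma\le-\tfrac13\sigma^2$ but then writes the bound corresponding to $\dot\sigma\le-\tfrac12\sigma^2$, so the constant $2$ appears to be a slip and your $\tfrac43$ is what the argument actually gives (all the qualitative conclusions --- negativity, strict monotonicity, convergence of the torsion to zero --- are unaffected). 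Second, like the paper you assert the comparison inequality on all of $(T_-,\infty)$, but integrating $\ddt(-1/R)\ge\tfrac43$ only produces the stated \emph{lower} bound for $t\ge 0$; for $t<0$ the same inequality integrates to the reverse bound, so the display should really be restricted to $t\ge0$ (this affects the paper's proof equally). Third, your explicit argument that $R$ remains strictly negative --- skew-symmetric matrices are equilibria of \eqref{BFA}, so backward uniqueness forbids $R(t_\ast)=0$ --- and hence that $\ddt R>0$, is a welcome piece of rigor that the paper leaves implicit. One typo in your parenthetical alternative: $\ddt\tr(A^2)=2\tr(A\dot A)=-\tfrac13\sigma\,\tr(A^2)$, not $-\tfrac16\sigma\,\tr(A^2)$; with the correct factor that bookkeeping also reproduces $\dot\sigma=-\tfrac13\sigma^2-2|[A,A^t]|^2$.
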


\begin{proof}
It follows from Proposition \ref{invf1} that
\begin{align*}
\ddt\tr{(A+A^t)^2} =& 2\tr{(A+A^t)\ddt (A+A^t)} \\
=& -\frac{1}{3}\left(\tr{(A+A^t)^2}\right)^2 + \tr{(A+A^t)[A-A^t,[A,A^t]]} \\
& - \tr{(A+A^t)[A-A^t,(A+A^t)^2]} \\
=& -\frac{1}{3}\left(\tr{(A+A^t)^2}\right)^2 - \tr{[A-A^t,A+A^t][A,A^t]} \\
=& -\frac{1}{3}\left(\tr{(A+A^t)^2}\right)^2 -2 \tr{[A,A^t]^2} \\
\leq& -\frac{1}{3}\left(\tr{(A+A^t)^2}\right)^2.
\end{align*}
This implies that
$$
\tr{(A+A^t)^2}\leq \frac{1}{\unm t+\frac{1}{\tr{(A_0+A_0^t)^2}}}, \qquad \forall t\in (T_-,\infty),
$$
and so the proposition follows from the formula $R_A=-\unc\tr{(A+A^t)^2}$ in \eqref{RicA}.
\end{proof}

\subsection{Solitons}
We now give necessary and sufficient conditions on the matrix $A\in\slg(3,\CC)$ to obtain an algebraic soliton.

\begin{proposition}\label{sol-A-prop}
A closed $G_2$-structure $(G_A,\vp)$ is an algebraic soliton if and only if either $A$ is normal (i.e.\ $[A,A^t]=0$) or $A$ is nilpotent and
\begin{equation}\label{sol-A}
[A,[A,A^t]-(A+A^t)^2] = -\frac{|[A,A^t]|^2}{|A|^2}A.
\end{equation}
In any case, one has that $D=Q_A-cI\in\Der(\mu_A)$ and $\Delta_A\vp=-3c\vp - \lca_{X_D}\vp$ for
$$
c=-\frac{1}{6}\tr{(A+A^t)^2}-\frac{|[A,A^t]|^2}{2|A|^2}.
$$
In particular, they are all expanding Laplacian solitons, unless they are torsion-free (i.e.\ $A^t=-A$).
\end{proposition}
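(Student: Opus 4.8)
The plan is to apply Theorem \ref{rsequiv}. Since $K$ is trivial, $\ggo=\mu_A$ is its own reductive complement, so $(G_A,\vp)$ is an algebraic soliton exactly when $Q_A=cI+D$ for some $c\in\RR$ and $D\in\Der(\mu_A)$; here $D=Q_A-cI$ is forced, and by \eqref{QA} it is block-diagonal with respect to $\ggo=\hg\oplus\RR e_7$, namely $D=\left[\begin{smallmatrix}Q_1-cI&0\\0&q-c\end{smallmatrix}\right]$. First I would note that, because $\hg$ is abelian and $\ad_{\mu_A}e_7|_\hg=A$, such a block-diagonal map $\left[\begin{smallmatrix}D_\hg&0\\0&d\end{smallmatrix}\right]$ is a derivation of $\mu_A$ if and only if $[D_\hg,A]=dA$; in our situation this reads $[Q_1,A]=(q-c)A$. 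A short commutator computation using the formula for $Q_1$ in \eqref{QA} (the scalar summand $\tfrac1{12}\tr((A+A^t)^2)I$ drops out) yields
\[
[Q_1,A]=-\tfrac12\,[A,\,[A,A^t]-(A+A^t)^2],
\]
so $(G_A,\vp)$ is an algebraic soliton precisely when $[A,[A,A^t]-(A+A^t)^2]\in\RR A$.

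To identify this multiple, assume $A\neq0$ (for $A=0$ the space is flat) and pair $[A,[A,A^t]-(A+A^t)^2]=\lambda A$ with $A$ in the inner product $\langle X,Y\rangle=\tr(XY^t)$. Using $\tr([A,B]A^t)=-\tr(B[A,A^t])$, the symmetry of $[A,A^t]$, and the identity $\tr((A+A^t)^2[A,A^t])=0$ from \eqref{ortA}, the left-hand side equals $-|[A,A^t]|^2$, forcing $\lambda=-|[A,A^t]|^2/|A|^2$. Thus the soliton condition is exactly equation \eqref{sol-A}, the corresponding constant is $c=q+\tfrac\lambda2=-\tfrac16\tr((A+A^t)^2)-\tfrac{|[A,A^t]|^2}{2|A|^2}$, and $D=Q_A-cI\in\Der(\mu_A)$ together with $\Delta_A\vp=-3c\vp-\lca_{X_D}\vp$ follow from Theorem \ref{rsequiv}. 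Since $(A+A^t)^2$ is positive semidefinite, $c\le0$, with equality only if $A+A^t=0$, i.e.\ $A^t=-A$ (torsion-free); hence these solitons are expanding unless torsion-free.

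Finally I would determine which $A$ satisfy \eqref{sol-A}. If $A$ is normal, then $[A,A^t]=0$ and $A$ commutes with $(A+A^t)^2$, so both sides of \eqref{sol-A} vanish; and if $A$ is nilpotent then \eqref{sol-A} is imposed directly. Conversely, suppose \eqref{sol-A} holds with $[A,A^t]\neq0$. Put $d:=q-c=\tfrac{|[A,A^t]|^2}{2|A|^2}>0$ and $D_\hg:=Q_1-cI$, so $[D_\hg,A]=dA$; moreover $D_\hg$ is symmetric (both $[A,A^t]$ and $(A+A^t)^2$ are), hence diagonalizable over $\RR$. The relation $[D_\hg,A]=dA$ gives $A\,V_\alpha\subseteq V_{\alpha+d}$ for each eigenspace $V_\alpha$ of $D_\hg$; since $D_\hg$ has only finitely many eigenvalues and $d>0$, iterating forces $A^k=0$ for $k$ large, so $A$ is nilpotent. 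Extracting nilpotency of $A$ from this derivation relation with a strictly positive weight is the one step that is not a routine computation, and it is the crux of the proof.
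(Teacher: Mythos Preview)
Your proof is correct and follows the same overall structure as the paper's: reduce the algebraic-soliton condition to $[Q_1,A]=(q-c)A$, compute $d=q-c=|[A,A^t]|^2/(2|A|^2)$ via the trace pairing and \eqref{ortA}, and then show that $d\neq 0$ forces $A$ to be nilpotent. The only notable difference is in this last step. You use a weight argument: since $D_\hg$ is symmetric (hence diagonalizable) and $[D_\hg,A]=dA$ with $d>0$, $A$ shifts eigenspaces by $+d$, so $A$ is nilpotent. The paper instead uses the trace trick: from $[Q_1,A]=dA$ one gets $d\,\tr A^k=\tr([Q_1,A]A^{k-1})=0$ for every $k\geq 1$ by cyclicity of the trace, and $\tr A^k=0$ for all $k$ is equivalent to nilpotency. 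The paper's argument is a one-liner and does not need to invoke symmetry of $D_\hg$, while your argument is more structural (it identifies the grading induced by $D_\hg$); both are standard and equally valid.
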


\begin{proof}
Assume that $(G_A,\vp)$ is an algebraic soliton, i.e.\ $Q_A=cI+D$ for some $c\in\RR$ and $D\in\Der(\ggo,\mu_A)$.  Thus $De_7=de_7$ for some $d\in\RR$ and  $[Q_1,A]=[D|_\hg,A]=dA$.  It follows from \eqref{QA} that
$$
\unm[A,[A,A^t] - \unm[A,(A+A^t)^2] = -dA,
$$
and thus $-d\tr{A^k}=0$ for any $k\in\NN$.  On the other hand, from \eqref{ortA} we obtain that $d=|[A,A^t]|^2/(2|A|^2)$, so either $A$ is normal or $A$ is nilpotent and satisfies the matrix equation in the proposition.  The converse can be easily checked and the formula for $c$ follows from the fact that $c=q-d$.
\end{proof}

\begin{corollary}\label{sol-A-cor}
Any closed algebraic soliton of the form $(G_A,\vp)$ is equivariantly equivalent up to scaling to one of the following:
\begin{itemize}
\item[(i)] $A=\left[\begin{smallmatrix}
x&&\\
&y&\\
&&z
\end{smallmatrix}\right]$, $x,y,z\in\CC$, $x+y+z=0$.

\item[(ii)] $A=\left[\begin{smallmatrix}
0&0&1\\
&0&0\\
&&0
\end{smallmatrix}\right]$.
\end{itemize}
Two $G_2$-structures in part (i) are equivariantly equivalent if and only if either $\{ x_2,y_2,z_2\}=\{ x_1,y_1,z_1\}$ or $\{ x_2,y_2,z_2\} = \{ \overline{x_1},\overline{y_1},\overline{z_1}\}$.
\end{corollary}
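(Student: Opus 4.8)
The plan is to feed everything through Proposition~\ref{sol-A-prop}, which says that $(G_A,\vp)$ is an algebraic soliton exactly when $A$ is normal (i.e.\ $[A,A^t]=0$) or $A$ is nilpotent and satisfies \eqref{sol-A}. Throughout I identify $\slg(3,\CC)$ with the traceless complex $3\times3$ matrices, under which the real transpose $A^t$ becomes the conjugate transpose; so ``$A$ normal'' is normality in the usual complex sense, and ``$A$ torsion-free'' ($A^t=-A$) is skew-Hermiticity. If $A$ is normal, diagonalize it unitarily and rescale the diagonalizing unitary to have determinant one: then $A$ is $\SU(3)$-conjugate to some $\Diag(x,y,z)$ with $x+y+z=0$, and the ``if'' part of Proposition~\ref{iso-equiv} (which needs no restriction on $A$) gives that $(G_A,\vp)$ is equivariantly equivalent to $(G_{\Diag(x,y,z)},\vp)$; this is case~(i), and conversely every such diagonal matrix is normal, hence gives an algebraic soliton.

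The nilpotent case is the heart of the argument, and it is where I expect the only real obstacle. From the proof of Proposition~\ref{sol-A-prop} one has $D=Q_A-cI\in\Der(\mu_A)$ with $De_7=de_7$, $d=|[A,A^t]|^2/(2|A|^2)$, hence $[Q_1,A]=dA$ for the symmetric operator $Q_1$ of \eqref{QA}. Symmetry of $Q_1$ forces $[Q_1,A^t]=-dA^t$, so $[Q_1,[A,A^t]]=0$; since $Q_1=\unm[A,A^t]+(\text{scalar})\,I-\unm(A+A^t)^2$, the tautology $[Q_1,Q_1]=0$ then gives $[Q_1,(A+A^t)^2]=0$, and combining with $[Q_1,A+A^t]=d(A-A^t)$ yields $d\,\bigl\{A-A^t,A+A^t\bigr\}=0$, i.e.\ $d\bigl(A^2-(A^t)^2\bigr)=0$. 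If $d=0$ then $A$ is a normal nilpotent, so $A=0$ (covered by~(i)); if $d\neq0$ then $A^2=(A^t)^2$ is symmetric, but $A^2$ is also nilpotent, hence $A^2=0$. Thus a nonzero nilpotent soliton matrix has rank one with $A^2=0$: writing $Ax=\langle x,w\rangle u$, the relation $A^2=0$ is exactly $u\perp w$, and picking $g\in\SU(3)$ carrying the normalized $u,w$ to $e_1,e_3$ makes $gAg^{-1}$ a positive multiple of $A_0$. By Proposition~\ref{iso-equiv} together with the geometric rescaling of \eqref{scmu}, $(G_A,\vp)$ is equivariantly equivalent up to scaling to $(G_{A_0},\vp)$ — case~(ii) — and a one-line $3\times3$ check confirms that $A_0$ does satisfy \eqref{sol-A}. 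In particular no regular nilpotent $A$ (so none of the $G_2$-structures living on $\ngo_6$) gives a closed algebraic soliton of this form.

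It remains to settle the equivalence criterion within~(i). A nonzero traceless diagonal matrix is not nilpotent, so the converse in Proposition~\ref{iso-equiv} applies: $(G_{\Diag(x_1,y_1,z_1)},\vp)$ and $(G_{\Diag(x_2,y_2,z_2)},\vp)$ are equivariantly equivalent iff $\Diag(x_2,y_2,z_2)$ is $\SU(3)$-conjugate to $\Diag(x_1,y_1,z_1)$ or to its complex conjugate $\Diag(\overline{x_1},\overline{y_1},\overline{z_1})$. Since every permutation of the diagonal is realized by $\SU(3)$-conjugation, $\SU(3)$-conjugacy of diagonal matrices is just equality of eigenvalue multisets, which gives precisely the stated dichotomy (the degenerate case $x=y=z=0$ being immediate). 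Cases (i) and (ii) are disjoint, since $A_0$ is a nonzero nilpotent while the matrices of~(i) are semisimple. The main difficulty, then, is isolating the identity $d\bigl(A^2-(A^t)^2\bigr)=0$ from the derivation condition and recognizing that, via ``a symmetric nilpotent matrix vanishes'', it eliminates the regular nilpotent Jordan block — everything else is routine linear algebra and bookkeeping with Proposition~\ref{iso-equiv}.
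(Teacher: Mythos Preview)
Your argument is correct, and in the nilpotent case it takes a genuinely different route from the paper's. The paper puts a nilpotent $A$ into Schur form
\[
\begin{pmatrix}0&a&b\\ 0&0&c\\ 0&0&0\end{pmatrix}
\]
via $\SU(3)$-conjugation and then checks by a direct (if unremarkable) computation that the matrix identity \eqref{sol-A} holds for this shape if and only if $a=0$ or $c=0$, after which one reduces to $A_0$ by a further $\SU(3)$-conjugation and scaling. You instead exploit the derivation condition $[Q_1,A]=dA$ structurally: from symmetry of $Q_1$ you get $[Q_1,A^t]=-dA^t$, hence $[Q_1,[A,A^t]]=0$; feeding this back into the explicit form of $Q_1$ gives $[Q_1,(A+A^t)^2]=0$, and comparing with $[Q_1,A+A^t]=d(A-A^t)$ yields $d\bigl(A^2-(A^t)^2\bigr)=0$. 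Since $(A^t)^2=(A^2)^*$, this forces $A^2$ Hermitian and nilpotent, hence zero, which is exactly the conclusion $A^2=0$ that singles out $\ngo_2$ and excludes $\ngo_6$.

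Your approach is coordinate-free and isolates the key obstruction ($A^2=0$) without any case-by-case matrix algebra; the paper's approach is more pedestrian but has the virtue of simultaneously verifying the converse (that $a=0$ or $c=0$ really does satisfy \eqref{sol-A}) in the same computation. For the normal case and the equivalence statement within (i) your treatment matches the paper's, with the added care of noting that permutations of diagonal entries can be realized in $\SU(3)$ (not just $\U(3)$) and that the converse of Proposition~\ref{iso-equiv} is available because nonzero traceless diagonal matrices are not nilpotent.
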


\begin{remark}
In relation to the question studied in \cite{FrnFinMnr}, we observe that the metrics attached to all the above closed Laplacian solitons are well known to be Ricci solitons (see e.g.\ \cite{Arr}).
\end{remark}

\begin{proof}
Let $(G_A,\vp)$ be an algebraic soliton which is closed, so $A\in\slg(3,\CC)$.  If $A$ is normal, then on its $\SU(3)$-conjugation class there is a diagonal matrix, so $(G_A,\vp)$ is equivalent to one of the structures in part (i) by Proposition \ref{iso-equiv} and Remark \ref{iso-equiv-rem}.  The last statement on equivalence among these structures follows in much the same way.

Assume now that $A$ is nilpotent and nonzero.  It follows that $A$ is $\SU(3)$-conjugate to a matrix of the form,
$$
\left[\begin{smallmatrix}
0&a&b\\
0&0&c\\
0&0&0
\end{smallmatrix}\right], \qquad a,b,c\in\CC.
$$
It is straightforward to check that condition \eqref{sol-A} holds for such a matrix if and only if $a=0$ or $c=0$ (recall that the transpose $A^t$ must be replaced by $A^*=\overline{A^t}$ in condition \eqref{sol-A} when working with complex matrices).  The resulting matrix is easily seen to be $\SU(3)$-conjugate up to scaling to the one given in part (ii), concluding the proof.
\end{proof}

We note that the closed Laplacian solitons provided by part (i) in the above corollary are given as real matrices by
$$
A=\left[\begin{array}{c|c}
B&-C\\\hline
C&B\\
\end{array}\right], \quad B=\left[\begin{smallmatrix}
a&&\\
&b&\\
&&c
\end{smallmatrix}\right], \; C=\left[\begin{smallmatrix}
d&&\\
&e&\\
&&f
\end{smallmatrix}\right]\in\slg_3(\RR).
$$

\begin{corollary}
Let $G_A$ be an almost abelian Lie group such that the matrix $A\in\glg_6(\RR)$ is conjugate to an element in $\slg(3,\CC)$.  Then $G_A$ admits a left-invariant closed algebraic soliton if and only if either $A$ is semisimple or $A^2=0$.  Such a soliton is the unique algebraic soliton up to equivariant equivalence and scaling among all left-invariant closed $G_2$-structures on $G_A$.
\end{corollary}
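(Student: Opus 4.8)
The plan is to translate the geometric question into one about the $3\times3$ complex matrix underlying $A$ and then apply Proposition~\ref{sol-A-prop} together with the normal-form list of Corollary~\ref{sol-A-cor}. Since $G_A$ depends on $A$ only up to conjugation and nonzero scaling, I may assume $A\in\slg(3,\CC)$; by the construction preceding Proposition~\ref{formA}, the left-invariant closed $G_2$-structures on $G_A$ are, up to equivalence, precisely the $(G_B,\vp)$ with $B\in\slg(3,\CC)$ and $\mu_B\cong\mu_A$, i.e.\ with $B$ conjugate to $A$ up to a nonzero real scalar. Two facts will be used repeatedly, both coming from the identification of $\slg(3,\CC)$ with complex $3\times3$ matrices via realification: normality of $B$ in the sense of Proposition~\ref{sol-A-prop}, $[B,B^t]=0$, corresponds to $[B,B^*]=0$ with $B^*=\overline{B^t}$; and a real matrix is semisimple if and only if the corresponding complex matrix is diagonalizable.

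For the ``if'' direction: if $A$ is semisimple it is $\mathrm{GL}_3(\CC)$-conjugate to a traceless diagonal matrix $D=\Diag(x,y,z)$, so $\mu_D\cong\mu_A$; a diagonal matrix is normal, hence $(G_D,\vp)$ is a closed algebraic soliton on $G_D\cong G_A$ by Proposition~\ref{sol-A-prop}. If $A^2=0$ and $A\neq0$, then $A$ is a nonzero nilpotent $3\times3$ matrix with $A^2=0$, hence conjugate up to scaling to $A_0=\left[\begin{smallmatrix}0&0&1\\&0&0\\&&0\end{smallmatrix}\right]$; since $A_0$ is nilpotent and satisfies \eqref{sol-A}, Proposition~\ref{sol-A-prop} gives that $(G_{A_0},\vp)$ is a closed algebraic soliton, and $\mu_{A_0}\cong\mu_A$ puts it on $G_A$. (If $A=0$ one gets the flat $(\RR^7,\vp)$, a steady algebraic soliton, and then $A$ is both semisimple and square-zero.)

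For the ``only if'' direction, suppose $G_A$ carries a closed algebraic soliton, realized as $(G_B,\vp)$ with $B\in\slg(3,\CC)$ conjugate to $A$ up to a nonzero real. By Proposition~\ref{sol-A-prop}, $B$ is normal or nilpotent. If $B$ is normal it is diagonalizable over $\CC$, hence semisimple, and therefore $A$ --- a nonzero multiple of a conjugate of $B$ --- is semisimple. If $B$ is nilpotent, Corollary~\ref{sol-A-cor}(ii) makes $(G_B,\vp)$ equivariantly equivalent up to scaling to $(G_{A_0},\vp)$, so $B$ is conjugate to $A_0$ up to a nonzero scalar; as $A_0^2=0$ and this is preserved by conjugation and scaling, $B^2=0$, hence $A^2=0$.

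Uniqueness. Let $(G_{B_1},\vp)$ and $(G_{B_2},\vp)$ be closed algebraic solitons on $G_A$, so $\mu_{B_1}\cong\mu_{B_2}\cong\mu_A$. If some $B_i$ is nilpotent then $A$ is nilpotent by the previous paragraph, forcing both $B_i$ nilpotent, and Corollary~\ref{sol-A-cor}(ii) makes both equivariantly equivalent up to scaling to $(G_{A_0},\vp)$. Otherwise both $B_i$ are normal, hence (spectral theorem together with Remark~\ref{iso-equiv-rem}) equivariantly equivalent up to scaling to traceless diagonal matrices $D_1,D_2\in\slg(3,\CC)$, and $\mu_{D_1}\cong\mu_{D_2}$ forces $D_1$ to be $\CC$-conjugate to $\lambda D_2$ or to $\lambda\overline{D_2}$ for some nonzero real $\lambda$, i.e.\ $\Spec(D_1)=\lambda\,\Spec(D_2)$ or $\lambda\,\overline{\Spec(D_2)}$. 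Since the $\RR^*$-action $\mu\mapsto c\cdot\mu$ of \eqref{scmu} rescales the spectrum by an arbitrary nonzero real and, by Remark~\ref{iso-equiv-rem}, passing to $\overline{D}$ is an equivariant equivalence, this is exactly the criterion of Corollary~\ref{sol-A-cor} for $(G_{D_1},\vp)$ and $(G_{D_2},\vp)$ to be equivariantly equivalent up to scaling. I expect this last matching to be the delicate point: one must check that the invariant separating the structures in Corollary~\ref{sol-A-cor} (the unordered eigenvalue set modulo complex conjugation) coincides, after allowing the scalings and conjugations permitted by ``equivariant equivalence up to scaling'', with the datum frozen by the isomorphism type of $\mu_A$ (the eigenvalue set modulo nonzero real scalar and complex conjugation); in particular one must absorb the sign of $\lambda$, either through negative scalings in the $\RR^*$-action or through the second ($\det h=-1$, $hJh^{-1}=-J$) type of equivariant equivalence in Proposition~\ref{iso-equiv}.
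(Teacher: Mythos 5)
Your proof is correct and follows essentially the same route as the paper's: existence and the normal/nilpotent dichotomy come from Proposition \ref{sol-A-prop} and Corollary \ref{sol-A-cor}, and uniqueness reduces to the fact that normal matrices in a given conjugacy class are determined up to $\SU(3)$-conjugacy by their spectrum. The paper compresses the spectral-matching step you flag as delicate into the single observation that each $\Sl(3,\CC)$-conjugacy class of semisimple matrices contains a unique $\SU(3)$-conjugacy class of normal matrices, combined with Proposition \ref{iso-equiv}; your more explicit bookkeeping of the real scaling $\lambda$ and of complex conjugation is a spelled-out version of the same argument.
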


\begin{proof}
The existence assertion follows from the above corollary.  For the uniqueness, we use that on each $\Sl(3,\CC)$-conjugation class of semisimple matrices in $\slg(3,\CC)$ there is a unique $\SU(3)$-conjugacy class of normal matrices and Proposition \ref{iso-equiv}.
\end{proof}

\begin{remark}\label{n2n6-2}
Concerning the nilpotent case described in Example \ref{n2n6}, it follows from the above corollaries that $\ngo_2$ admits a unique closed algebraic soliton and that there is no any closed algebraic soliton on $\ngo_6$.
\end{remark}

In what follows, we explore the existence of semi-algebraic solitons of the form $(G_A,\vp)$.

\begin{proposition}\label{sas-A}
A closed $G_2$-structure $(G_A,\vp)$ is a semi-algebraic soliton if and only if either $A$ is normal and it is an algebraic soliton, or $A$ is nilpotent and
\begin{equation}\label{sa-sol-A}
[A,A^t]-(A+A^t)^2 = -\left(2d+\frac{1}{2}\tr{(A+A^t)^2}\right)I + D_1 + D_1^t, \qquad d:=\frac{|[A,A^t]|^2}{2|A|^2},
\end{equation}
for some $D_1\in\glg(\hg)$ such that $[D_1,A]=dA$.  This is an algebraic soliton if and only if also $[D_1^t,A]=dA$.  In any case, one has that $Q_A=cI+\unm(D+D^t)$ for $D\in\Der(\mu_A)$ given by $D|_\hg=D_1$, $De_7=de_7$, and $\Delta_A\vp=-3c\vp - \lca_{X_D}\vp$ for
$$
c=-\frac{1}{6}\tr{(A+A^t)^2}-\frac{|[A,A^t]|^2}{2|A|^2}.
$$
In particular, they are all expanding Laplacian solitons, unless they are torsion-free (i.e.\ $A^t=-A$).
\end{proposition}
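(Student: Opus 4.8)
The plan is to feed the general theory of Section~\ref{homLS} into an elementary description of $\Der(\mu_A)$, exactly in the spirit of the proof of Proposition~\ref{sol-A-prop}. Since $G_A$ is simply connected and the isotropy is trivial, Proposition~\ref{sas-der} together with Remark~\ref{sas-closed} reduces the statement to the following: $(G_A,\vp)$ is a semi-algebraic soliton if and only if
$$
Q_A = cI + \unm(D+D^t), \qquad c\in\RR, \quad D\in\Der(\mu_A),
$$
the transpose being taken with respect to $\ip_\vp$, and in that case $\Delta_A\vp = -3c\vp - \lca_{X_D}\vp$ by the same proposition. Granting the displayed formula for $c$, the final assertion is then immediate: $\tr{(A+A^t)^2}=|A+A^t|^2\ge 0$ and $|[A,A^t]|^2\ge 0$ both vanish precisely when $A^t=-A$, i.e.\ exactly when $(G_A,\vp)$ is torsion-free, and otherwise $c<0$.

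First I would compute $\Der(\mu_A)$. Writing a linear map $D$ of $\ggo$ in block form relative to the orthogonal decomposition $\ggo=\hg\oplus\RR e_7$, say $De_7=v+de_7$ with $v\in\hg$, $d\in\RR$, and $Dx=D_1x+\psi(x)e_7$ for $x\in\hg$ with $D_1\in\glg(\hg)$ and $\psi\in\hg^*$, the derivation identity applied to $[\hg,\hg]=0$ and to $[e_7,x]=Ax$ shows that $D\in\Der(\mu_A)$ is equivalent to $[D_1,A]=dA$ together with the conditions $\psi\circ A=0$ and $\psi(x)Ay=\psi(y)Ax$ for all $x,y\in\hg$, while $v$ is unconstrained; in particular $\psi=0$, $v=0$ is always admissible. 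Since $Q_A$ in \eqref{QA} is symmetric and block-diagonal, matching $Q_A=cI+\unm(D+D^t)$ block by block forces $v=-\psi$ (so we may take $v=\psi=0$), $c=q-d$ on the $\RR e_7$-block, and $D_1+D_1^t=2(Q_1-cI)$ on the $\hg$-block. Hence $(G_A,\vp)$ is a semi-algebraic soliton if and only if there are $d\in\RR$ and $D_1\in\glg(\hg)$ with $[D_1,A]=dA$ and $D_1+D_1^t=2\bigl(Q_1-(q-d)I\bigr)$, and substituting the formulas for $Q_1$ and $q$ from \eqref{QA} turns the latter equality into exactly equation \eqref{sa-sol-A}.

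It remains to pin down $d$ and split cases. Pairing $[D_1,A]=dA$ with $A$ and using that the skew part of $D_1$ is orthogonal to the symmetric matrix $[A,A^t]$ gives $d|A|^2=\tr{D_1[A,A^t]}=\tr{(Q_1-cI)[A,A^t]}$; then $\tr{[A,A^t]}=0$ and the identity \eqref{ortA} collapse this to $d|A|^2=\unm\tr{[A,A^t]^2}=\unm|[A,A^t]|^2$, so $d=|[A,A^t]|^2/(2|A|^2)$ (the case $A=0$ being trivially torsion-free). If $d=0$ then $[A,A^t]=0$, so $A$ is normal and, by Proposition~\ref{sol-A-prop}, $(G_A,\vp)$ is already an algebraic soliton; conversely an algebraic soliton is a semi-algebraic soliton since for closed $\vp$ the derivation $Q_\vp-cI$ is symmetric. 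If $d\ne0$, iterating $[D_1,A^k]=kdA^k$ yields $\tr{A^k}=0$ for all $k\ge1$, hence $A$ is nilpotent, and $D_1+D_1^t=2(Q_1-cI)$ is \eqref{sa-sol-A}. For the nilpotent converse one takes $D=\left[\begin{smallmatrix}D_1&0\\0&d\end{smallmatrix}\right]$, which lies in $\Der(\mu_A)$ by the block analysis, and checks directly that $Q_A=cI+\unm(D+D^t)$. Finally, $(G_A,\vp)$ is an algebraic soliton precisely when $Q_A-cI=\left[\begin{smallmatrix}Q_1-cI&0\\0&d\end{smallmatrix}\right]$ is itself a derivation, i.e.\ when $[Q_1-cI,A]=dA$; combined with $[D_1,A]=dA$ and $D_1+D_1^t=2(Q_1-cI)$ this is equivalent to $[D_1^t,A]=dA$, as stated.

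The only genuinely non-formal step is the derivation computation together with the determination of $d$: unlike in Proposition~\ref{sol-A-prop}, the block $D_1$ need not be symmetric here (only its symmetric part is prescribed, equal to $Q_1-cI$), so the skew part of $D_1$ is free to absorb the discrepancy between $[Q_1-cI,A]$ and $dA$ — this is exactly why $\ngo_6$ carries a semi-algebraic but no algebraic soliton. Pairing $[D_1,A]=dA$ against $A$ and invoking \eqref{ortA} is nonetheless what forces the value of $d$ and hence the clean normal/nilpotent dichotomy, and I expect that to be the main point requiring care.
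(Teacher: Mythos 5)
Your proof is correct and follows essentially the same route as the paper: reduce via Proposition \ref{sas-der} and Remark \ref{sas-closed} to the condition $Q_A=cI+\unm(D+D^t)$ with $D\in\Der(\mu_A)$, analyze derivations of $\mu_A$ blockwise against the block-diagonal symmetric $Q_A$ from \eqref{QA}, pin down $d$ by pairing $[D_1,A]=dA$ with $A$ and invoking \eqref{ortA}, and split into the normal and nilpotent cases via $\tr{A^k}=0$. Your treatment is a bit more explicit than the paper's (full block description of $\Der(\mu_A)$, and handling the normal case by citing Proposition \ref{sol-A-prop} instead of checking $[D_1^t,A]=0$ directly), but these are only bookkeeping differences.
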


\begin{proof}
Assume that $(G_A,\vp)$ is a semi-algebraic soliton, i.e.\ $Q_A=cI+\unm(D+D^t)$ for some $c\in\RR$ and $D\in\Der(\mu_A)$.  It follows from \eqref{QA} that $De_7=de_7$ for some $d\in\RR$, so $[D_1,A]=dA$ for $D_1:=D|_\hg$.  This is actually $D|_\hg$ composed with the projection on $\hg$ since it may happen that $D\hg$ is not contained in $\hg$.  Note that in any case,
$$
\left[\begin{array}{c|c}
& \\
\quad D_1\quad  &  0 \\ & \\\hline
0 & d
\end{array}\right]
$$
is also a derivation of $\mu_A$.  Furthermore, one has that $q=c+d$ and thus condition \eqref{sa-sol-A} holds since $q+2c=3q-2d$.  Finally, the formula for $d$ can be obtained as follows using \eqref{ortA}:
$$
d|A|^2 =\tr{A^t[D_1,A]} = \tr{[A,A^t]D_1} = \unm\tr{[A,A^t]^2}.
$$
In particular, if $A$ is normal then $d=0$ and $[D_1^t,A]=0$, so $D^t$ is also a derivation and $(G_A,\vp)$ is an algebraic soliton.  The converse can be easily checked and the formula for $c$ follows from the fact that $c=q-d$.
\end{proof}

Recall from Remark \ref{n2n6-2} that $\ngo_6$ (i.e. $A^3=0$ and $A^2\ne 0$) does not admit any algebraic soliton.  However, we now show that $\ngo_6$ does admit a semi-algebraic soliton, which was first found in \cite{Ncl} by different methods.

\begin{example}\label{semi-ex}
With the same notation as in the above proposition, consider the nilpotent matrix
$$
A=\left[\begin{array}{c|c}
B&0\\\hline
0&B\\
\end{array}\right]\in\slg(3,\CC), \qquad B=\left[\begin{smallmatrix}
0&1&0\\
0&0&\sqrt{2}\\
0&0&0
\end{smallmatrix}\right],
$$
and
$$
D_1=\left[\begin{array}{c|c}
D_2&0\\\hline
0&D_2\\
\end{array}\right], \qquad D_2=\left[\begin{smallmatrix}
4&0&-\sqrt{2}\\
0&3&0\\
0&0&2
\end{smallmatrix}\right], \qquad d=1.
$$
Thus $\mu_A$ is isomorphic to $\ngo_6$.  A straightforward computation gives that condition \eqref{sa-sol-A} holds and $[D_2,B]=B$, so the closed $G_2$-structure $(G_A,\vp)$ is an expanding semi-algebraic soliton with
$$
Q_A=-3I + \unm(D+D^t), \qquad D\in\Der(\mu_A).
$$
Since $[D_2^t,B]\ne B$, $D^t$ is not a derivation of $\mu_A$ and thus $(G_A,\vp)$ is not an algebraic soliton.  We note that $(G_A,\ip_\vp)$ is not a Ricci soliton (see e.g.\ \cite{Arr}) and the closed $G_2$-structure on $\ngo_6$ inducing a Ricci soliton obtained in \cite{FrnFinMnr} corresponds to
$$
A'=\left[\begin{array}{c|c}
B'&0\\\hline
0&B'\\
\end{array}\right]\in\slg(3,\CC), \qquad B'=\left[\begin{smallmatrix}
0&1&0\\
0&0&1\\
0&0&0
\end{smallmatrix}\right].
$$
From Proposition \ref{sas-der}, we know that the genuine Laplacian flow solution starting at $(G_A,\vp)$ is given by $\vp(t)=b(t)e^{s(t)D}\cdot\vp$, where $b(t)=(6t+1)^{3/2}$, so it has the form
$$
\vp(t) = e^{127}+e^{2s(t)}e^{347}+f(t)e^{567}+e^{135}-e^{146}-e^{236}-e^{245} + g(t)(e^{167}-e^{257}),
$$
for some smooth functions $f(t)$ and $g(t)$ (compare with the solution $\vp_6(t)$ in the proof of \cite[Theorem 4.8]{FrnFinMnr}).  The appearance of the last two new primitive $3$-forms $e^{167}$ and $e^{257}$ reveals the fact that $(G_A,\vp)$ is not Laplacian flow diagonal (see Theorem \ref{diagalg}).

According to \eqref{saBF2}, the bracket flow solution starting at $\mu_A$ is given by
$$
\mu(t)=(6t+1)^{-1/2} e^{s(t)E}\cdot\mu_A, \qquad s(t)=\tfrac{1}{6}\log(6t+1), \qquad t\in(-\tfrac{1}{6},\infty),
$$
where
$$
E:=\unm(D_2-D_2^t)=\left[\begin{array}{c|c|c}
E_2&0&\\\hline
0&E_2&\\\hline
&&0
\end{array}\right], \qquad E_2:=\tfrac{1}{\sqrt{2}}\left[\begin{smallmatrix}
0&0&-1\\
0&0&0\\
1&0&0
\end{smallmatrix}\right].
$$
This implies that the matrix bracket flow solution to \eqref{BFA} starting at $A$ equals
$$
A(t)=(6t+1)^{-1/2} e^{s(t)E}Ae^{-s(t)E} = (6t+1)^{-1/2}\left(\cos{\tfrac{s(t)}{\sqrt{2}}}A + \sin{\tfrac{s(t)}{\sqrt{2}}}A^{\perp}\right),
$$
where
$$
A^{\perp}:=\left[\begin{array}{c|c}
B^{\perp}&0\\\hline
0&B^{\perp}\\
\end{array}\right]\in\slg(3,\CC), \qquad B^{\perp}=\left[\begin{smallmatrix}
0&0&0\\
-\sqrt{2}&0&0\\
0&1&0
\end{smallmatrix}\right].
$$
We therefore obtain that $A(t)/|A(t)|$ runs on a circle and $A(t)$ converges to zero rounding in a cone.
\end{example}

\begin{example}
It is straightforward to see that the $4$-dimensional subspace of $\slg(3,\CC)$ given by
$$
A=\left[\begin{array}{c|c}
B&0\\\hline
0&B\\
\end{array}\right]\in\slg(3,\CC), \qquad B=\left[\begin{smallmatrix}
0&a&0\\
c&0&b\\
0&d&0
\end{smallmatrix}\right], \qquad a,b,c,d\in\RR,
$$
is invariant under the bracket flow \eqref{BFA}, and that the evolution is equivalent to the following dynamical system on $\RR^4$:
\begin{align*}
a' =& -\tfrac{5}{3}a^3-\tfrac{11}{6}abd-\tfrac{4}{3}a^2c+dcb+\tfrac{1}{3}ac^2-\tfrac{2}{3}ab^2-\tfrac{5}{3}ad^2+\tfrac{1}{2}cd^2, \\
b' =& -\tfrac{5}{3}b^3+\tfrac{1}{3}ba^2+\tfrac{1}{3}bd^2-\tfrac{5}{6}acb-\tfrac{5}{3}c^2b-\tfrac{1}{2}dc^2-\tfrac{4}{3}db^2, \\
c' =& -\tfrac{5}{3}c^3+\tfrac{1}{3}a^2c-\tfrac{5}{6}dcb-\tfrac{4}{3}ac^2-\tfrac{1}{2}ab^2-\tfrac{5}{3}cb^2+\tfrac{1}{3}cd^2, \\
d' =& -\tfrac{5}{3}d^3-\tfrac{11}{6}dca+\tfrac{1}{2}ba^2-\tfrac{4}{3}bd^2+acb-\tfrac{2}{3}dc^2+\tfrac{1}{3}db^2-\tfrac{5}{3}da^2.
\end{align*}
The hypersurface $ac+bd=0$ of nilpotent matrices is invariant under this ODE system, as well as the subset of $2$-step nilpotent ones (i.e.\ $a=d=0$ or $b=c=0$).  According to Corollary \ref{sol-A-cor}, such a subset is formed by all algebraic solitons isomorphic to $\ngo_2$ and so they evolve on a straight line converging to zero (see Theorem \ref{rsequiv}).  On the other hand, each point in the complement of $3$-step nilpotent matrices is isomorphic to $\ngo_6$.  We know from Example \ref{semi-ex} that the $2$-subspace generated by $(1,\sqrt{2},0,0)$ and $(0,0,-\sqrt{2},1)$ consists of semi-algebraic solitons on $\ngo_6$ which are not algebraic and that they flow by rounding on a cone while they converge to zero.  It would be really interesting to understand the qualitative behavior of the solutions outside the three $2$-dimensional subspaces above, specially in relation with possible chaotic behaviors.
\end{example}

We now exhibit an example of a homogeneous Laplacian soliton which is not semi-algebraic when presented as a left-invariant $G_2$-structure on a Lie group.

\begin{example}\label{no-sas}
Consider $A=A_1+A_2\in\slg(3,\RR)$, where
$$
A_1:=\left[\begin{smallmatrix}
0&0&1\\
0&0&0\\
0&0&0
\end{smallmatrix}\right]\in\slg(3,\RR), \qquad
A_2:= \left[\begin{smallmatrix}
\im&0&0\\
0&-2\im&0\\
0&0&\im
\end{smallmatrix}\right]\in\sug(3).
$$
According to Proposition \ref{sas-A}, $(G_A,\vp)$ is not a semi-algebraic soliton as $A$ is neither normal nor nilpotent.  Moreover, $G_A$ does not admit any left-invariant semi-algebraic soliton since $A$ is neither semisimple nor nilpotent.  However, it follows from Proposition \ref{equivA} that the closed $G_2$-structure $(G_A,\vp)$ is equivalent to the algebraic soliton $(G_{A_1},\vp)$ (see Corollary \ref{sol-A-cor}, (ii)), so $(G_A,\vp)$ is indeed a Laplacian soliton.
\end{example}

\subsection{Compact quotients}
It is well known that the Lie group $G_A$ admits a lattice (i.e. a cocompact discrete subgroup) if and only if
$$
\sigma e^{\alpha A}\sigma^{-1}\in\Sl_{6}(\ZZ),
$$
for some nonzero $\alpha\in\RR$ and $\sigma\in\Gl_{6}(\RR)$ (see e.g. \cite[Section 4]{Bck} or \cite{Hng}).  In that case, an example of a lattice is given by
$$
\Gamma=\exp\left(\sigma^{-1}\ZZ^{6}\rtimes\ZZ\alpha e_{7}\right).
$$
Thus $G_A$ always admits a lattice when $A$ is nilpotent and if $A$ is semisimple, then $G_A$ has a lattice if and only if the set $\{ e^{\alpha x},e^{\alpha y},e^{\alpha z}\}$, where $\{ x,y,z\}$ are the eigenvalues of $A$, is for some nonzero $\alpha\in\CC$ a set of conjugate algebraic units, i.e. the set of zeroes of a polynomial of the form $t^3+at^2+bt-1$, for some $a,b\in\ZZ$.

In particular, many of the closed Laplacian solitons of the form $(G_A,\vp)$ given in this section (see Corollary \ref{sol-A-cor} and Example \ref{semi-ex}) do admit compact quotients.  However, the corresponding closed $G_2$-structure on the compact manifold $M=G_A/\Gamma$ is not necessarily a Laplacian soliton since the vector field $X_D$ does not descend to $M$.  The Laplacian flow solution $\vp(t)$ on $M$ starting at $\vp$ remains locally equivalent to $\vp$, is immortal and has apparently no chances to converge in any reasonable sense, though the norm of the intrinsic torsion of $\vp(t)$ converges to zero, as $t\to\infty$ (see Proposition \ref{Revol}).


\begin{thebibliography}{MMM}
\bibitem[A]{Arr} {\sc R. Arroyo}, The Ricci flow in a class of solvmanifolds, {\it Diff. Geom. Appl.} {\bf 31} (2013), 472-485.

\bibitem[AL]{ArrLfn2} {\sc R. Arroyo, R. Lafuente}, The Alekseevskii conjecture in low dimensions, {\it Math. Ann.}, in press.

\bibitem[Bo]{Bck} {\sc C. Bock}, On Low-Dimensional Solvmanifolds, {\it Asian J. Math.} {\bf 20} (2016), 199-262.  

\bibitem[B]{Bry} {\sc R. Bryant}, Some remarks on $G_2$-structures, Proc. Gökova Geometry-Topology Conference (2005), 75-109.

\bibitem[BX]{BryXu} {\sc R. Bryant, F. Xu}, Laplacian flow for closed G2-structures: short time behavior, preprint 2011 (arXiv).

\bibitem[CZ]{ChnZhu} {\sc B-L Chen, X-P Zhu}, Uniqueness of the Ricci flow on complete noncompact Riemannian manifolds, {\it J. Diff. Geom.} {\bf 74} (2006), 119--154.
    
\bibitem[CF]{CntFrn} {\sc D. Conti, M. Fern\'andez}, Nilmanifolds with a calibrated G2-structure, {\it Diff. Geom. Appl.} {\bf 29} (2011), 493-506.

\bibitem[FFM]{FrnFinMnr} {\sc M. Fern\'andez, A. Fino, V. Manero}, Laplacian flow of closed $G_2$-structures inducing nilsolitons,  {\it J. Geom. Anal.}, in press (arXiv).

\bibitem[FG]{FrnGry} {\sc M. Fern\'andez, A. Gray}, Riemannian manifolds with structural group $G_2$,  {\it Ann. Mat. Pura Appl.} (IV) {\bf 32} (1982), 19-45.

\bibitem[FC]{Frn2} {\sc E. Fern\'andez-Culma}, Soliton almost K\"ahler structures on $6$-dimensional nilmanifolds for the symplectic curvature flow,
{\it J. Geom. Anal.} {\bf 25} (2015), 2736-2758.

\bibitem[F1]{Frb1} {\sc M. Freibert}, Calibrated and parallel structures on almost abelian Lie algebras,  preprint 2013 (arXiv).

\bibitem[F2]{Frb2} {\sc M. Freibert}, Cocalibrated structures on Lie algebras with a codimension one Abelian ideal, {\it Ann. Glob. Anal. Geom.} {\bf 42} (2012), 537-563

\bibitem[G1]{Grg} {\sc S. Grigorian}, Short-time behaviour of a modified Laplacian coflow of $G_2$-structures, {\it Adv. Math.} {\bf 248} (2013),  378-415.

\bibitem[G2]{Grg2} {\sc S. Grigorian}, Modified Laplacian coflow of $G_2$-structures on manifolds with symmetry, {\it Diff. Geom. Appl.} {\bf 46} (2016), 33-78.

\bibitem[H1]{Hbr2} {\sc J. Heber}, Geometric and algebraic structure of noncompact homogeneous Einstein spaces,
Univ. Augsburg, 1997.

\bibitem[H2]{Hbr} {\sc J. Heber}, Noncompact homogeneous Einstein spaces,
{\it Invent. math}. {\bf 133} (1998), 279-352.

\bibitem[H]{Hng} {\sc H. Huang}, Lattices and harmonic analysis on some $2$-step solvable Lie groups, {\it J. Lie Theory} {\bf 13} (2003), 77-89.

\bibitem[J1]{Jbl}  {\sc M. Jablonski}, Homogeneous Ricci solitons, {\it J. reine angew. Math.} {\bf 699} (2015) 159-182.

\bibitem[J2]{Jbl2} {\sc M. Jablonski}, Homogeneous Ricci solitons are algebraic, {\it Geom. Topol.} {\bf 18} (2014), 2477-2486.

\bibitem[K]{Krg} {\sc S. Karigiannis}, Flows of $G_2$-Structures I, {\it Quart. J. Math.} {\bf 60} (2009), 487-522.

\bibitem[KMT]{KrgMcKTsu} {\sc S. Karigiannis, B. McKay, M.-P. Tsui}, Soliton solutions for the Laplacian co-flow of some $G_2$-structures with symmetry, {\it Diff. Geom. Appl.} {\bf 30} (2012), 318-333.

\bibitem[LL1]{homRS} {\sc R. Lafuente, J. Lauret}, On homogeneous Ricci solitons, {\it Quart. J. Math.} {\bf 65} (2014), 399-419.

\bibitem[LL2]{alek} {\sc R. Lafuente, J. Lauret}, Structure of homogeneous Ricci solitons and the Alekseevskii conjecture, {\it J. Diff. Geom.} {\bf 98} (2014) 315-347.

\bibitem[L1]{soliton} {\sc J. Lauret}, Ricci soliton homogeneous nilmanifolds, {\it Math. Ann.} \textbf{319} (2001), 715-733.

\bibitem[L2]{SCF}  {\sc J. Lauret}, Curvature flows for almost-hermitian Lie groups, {\it Transactions Amer. Math. Soc.} {\bf 367} (2015), 7453–7480.  

\bibitem[L3]{BF}  {\sc J. Lauret}, Geometric flows and their solitons on homogeneous spaces, {\it Rend. Sem. Mat. Torino}, in press (arXiv).

\bibitem[LR]{CRF}  {\sc J. Lauret, E. Rodr\'\i guez-Valencia}, On the Chern-Ricci flow and its solitons for Lie groups, {\it Math. Nachrichten} {\bf 288} (2015), 1512–1526.

\bibitem[LW]{SCFmuA} {\sc J. Lauret, C. Will}, On the symplectic curvature flow for locally homogeneous manifolds, {\it J. Symp. Geom.}, in press (arXiv).

\bibitem[Li]{Lin}  {\sc C. Lin}, Laplacian solitons and symmetry in $G_2$-geometry, {\it J. Geom. Phys.} {\bf 64} (2013), 111-119.

\bibitem[LW1]{Lty} {\sc J. Lotay, Y. Wei}, Laplacian flow for closed $G_2$ structures: Shi-type estimates, uniqueness and compactness, preprint 2015 (arXiv).

\bibitem[LW2]{Lty2} {\sc J. Lotay, Y. Wei}, Stability of torsion free G2 structures along the Laplacian
flow, preprint 2015 (arXiv).

\bibitem[LW3]{Lty3} {\sc J. Lotay, Y. Wei}, Laplacian flow for closed $G_2$-structures: real analyticity, preprint 2016 (arXiv).

\bibitem[Lo]{Ltt} {\sc J. Lott}, On the long-time behavior of type-III Ricci flow solutions, {\it Math. Annalen} {\bf 339} (2007), 627-666.

\bibitem[N]{Ncl} {\sc M. Nicolini}, Laplacian solitons on nilpotent Lie groups, preprint 2016 (arXiv).

\bibitem[P]{Ptr} {\sc P. Petersen}, Riemannian geometry, {\it GTM 171, Springer}  (1998).

\bibitem[S]{Shi} {\sc W.X. Shi}, Deforming the metric on complete Riemannian manifolds, {\it J. Diff. Geom.} {\bf 30} (1989), 223-301.

\bibitem[WW1]{WssWtt1} {\sc H. Weiss, F. Witt}, A heat flow for special metrics,  {\it Adv. Math.} {\bf 231} (2012) no. 6, 3288-3322.

\bibitem[WW2]{WssWtt2} {\sc H. Weiss, F. Witt}, Energy functionals and solitons equations for $G_2$-forms, {\it Ann. Glob. Anal. Geom.} {\bf 42} (2012), 585-610.

\end{thebibliography}
\end{document}